\title[Virtual braids and permutations]{Virtual braids and permutations}
\author[P Bellingeri]{Paolo Bellingeri}
\address{LMNO, UMR 6139, CNRS, Universit\'e de Caen-Normandie, 14000 Caen, France}
\email{paolo.bellingeri@unicaen.fr}	
\urladdr{}
\author[L Paris]{Luis Paris}
\address{IMB, UMR 5584, CNRS, Univ. Bourgogne Franche-Comté, 21000 Dijon, France}
\email{lparis@u-bourgogne.fr}
\urladdr{}
\newtheorem{thm}{Theorem}[section]
\newtheorem{lem}[thm]{Lemma}
\newtheorem{prop}[thm]{Proposition}
\newtheorem{corl}[thm]{Corollary}
\theoremstyle{definition}
\newtheorem*{expl}{Example}
\numberwithin{equation}{section}
\renewcommand{\thefigure}{\ifnum \c@section>\z@ \thesection.\fi
 \@arabic\c@figure}
\begin{document}

\def\SSS{\mathfrak S} \def\VB{{\rm VB}} \def\Out{{\rm Out}}
\def\Z{\mathbb Z} \def\N{\mathbb N} \def\VP{{\rm VP}}
\def\KB{{\rm KB}} \def\Aut{{\rm Aut}} \def\id{{\rm id}}
\def\SS{\mathcal S} \def\XX{\mathcal X} \def\YY{\mathcal Y}
\def\UU{\mathcal U} \def\VV{\mathcal V} \def\WW{\mathcal W}


\begin{abstract}
Let $\VB_n$ be the virtual braid group on $n$ strands and let $\SSS_n$ be the symmetric group on $n$ letters.
Let $n,m \in \N$ such that $n \ge 5$, $m \ge 2$ and $n \ge m$.
We determine all possible homomorphisms from $\VB_n$ to $\SSS_m$, from  $\SSS_n$ to $\VB_m$ and  from  $\VB_n$ to $\VB_m$.
As corollaries we get that $\Out(\VB_n)$ is isomorphic to $\Z/2\Z \times \Z/2\Z$ and that $\VB_n$ is both, Hopfian and co-Hofpian.
\end{abstract}

\maketitle


\section{Introduction}\label{sec1}

The study of the homomorphisms from the braid group $B_n$ on $n$ strands to the symmetric group $\SSS_n$ goes back to Artin \cite{Artin1} himself. 
He was right thinking that this would be an important step toward the determination of the automorphism group of $B_n$.
As pointed out by Lin \cite{Lin1,Lin2}, all the homomorphisms from $B_n$ to $\SSS_m$ for $n \ge m$ are easily deduced from the ideas of Artin \cite{Artin1}. 
The automorphism group of $B_n$ was then determined by Dyer--Grossman \cite{DyeGro1} using in particular Artin's results in \cite{Artin1}.
The homomorphisms from $B_n$ to $B_m$ were determined by Lin \cite{Lin1,Lin2} for $n >m$ and by Castel \cite{Caste1} for $n=m$ and $n \ge 6$.

Virtual braids were introduced by Kauffman \cite{Kauff1} together with virtual knots and links. 
They have interpretations in terms of diagrams (see Kauffman \cite{Kauff1}, Kamada \cite{Kamad1} and Vershinin \cite{Versh1}) and also in terms of braids in thickened surfaces (see Cisneros de la Cruz \cite{Cisne1}) and now there is a quite extensive literature on them.
Despite their interest in low-dimensional topology, the virtual braid groups are poorly understood from a combinatorial point of view.
Among the known results for these groups there are solutions to the word problem in Bellingeri--Cisneros de la Cruz--Paris \cite{BeCiPa1} and in Chterental \cite{Chter1} and the calculation of some terms of its lower central series in Bardakov--Bellingeri \cite{BarBel1}, and some other results but not many more. 
For example, it is not known whether these groups have a solution to the conjugacy problem or whether they are linear.

In the present paper we prove results on virtual braid groups in the same style as those previously mentioned for the braid groups.
More precisely, we take $n, m \in \N$ such that $n \ge 5$, $m \ge 2$ and $n \ge m$ and 
\begin{itemize}
\item[(1)]
we determine all the homomorphisms from $\VB_n$ to $\SSS_m$,
\item[(2)]
we determine all the homomorphisms from $\SSS_n$ to $\VB_m$,
\item[(3)]
we determine all the homomorphisms from $\VB_n$ to $\VB_m$.
\end{itemize}
From these classifications it will follow that $\Out (\VB_n)$ is isomorphic to $\Z/2\Z \times \Z/2\Z$, and that $\VB_n$ is both, Hopfian and co-Hopfian.

Our study is totally independent from the works of Dyer--Grossman \cite{DyeGro1}, Lin \cite{Lin1, Lin2} and  Castel \cite{Caste1} cited above and all similar works on braid groups. 
Our viewpoint/strategy is also new for virtual braid groups, although some aspects are already present in  Bellingeri--Cisneros de la Cruz--Paris \cite{BeCiPa1}.
Our idea/main-contribution consists on first observing that the virtual braid group $\VB_n$ decomposes as a semi-direct product $\VB_n = \KB_n \rtimes \SSS_n$ of an Artin group $\KB_n$ by the symmetric group $\SSS_n$, and secondly and mainly on making a deep study of the action of $\SSS_n$ on $\KB_n$. 
From this study we deduce that there exists a unique embedding of the symmetric group $\SSS_n$ into the virtual braid group $\VB_n$ up to conjugation (see Lemma \ref{lem5_1}).
From there the classifications are deduced with some extra work.

The group $\VB_2$ is isomorphic to $\Z * \Z/2\Z$.
On the other hand, fairly precise studies of the combinatorial structure of $\VB_3$ can be found in Bardakov--Bellingeri \cite{BarBel1}, Bardakov--Mikhailov--Vershinin--Wu \cite{BMVW1} and Bellingeri--Cisneros de la Cruz--Paris \cite{BeCiPa1}.
From this one can probably determine (with some difficulties) all the homomorphisms from $\VB_n$ to $\SSS_m$,  from $\SSS_n$ to $\VB_m$, and from $\VB_n$ to $\VB_m$, for $n \ge m$ and $n = 2,3$. 
Our guess is that the case $n=4$ will be much more tricky.

Our paper is organized as follows. 
In Section \ref{sec2} we give the definitions of the homomorphisms involved in the paper, we state our three main theorems, and we prove their corollaries. 
Section \ref{sec3} is devoted to the study of the action of the symmetric group $\SSS_n$ on the above mentioned group $\KB_n$.
The results of this section are technical but they provide key informations on the structure of $\VB_n$ that are essential in the proofs of our main theorems. 
We also think that they are interesting by themselves and may be used for other purposes. 
Our main theorems are proved in Section \ref{sec4}, Section \ref{sec5} and Section \ref{sec6}, respectively. 


\section{Definitions and statements}\label{sec2}

In the paper we are interested in the algebraic and combinatorial aspects of virtual braid groups, hence we will adopt their standard definition in terms of generators and relations.
So, the \emph{virtual braid group} $\VB_n$ on $n$ strands is the group defined by the presentation with generators $\sigma_1, \dots, \sigma_{n-1}, \tau_1, \dots, \tau_{n-1}$ and relations 
\begin{gather*}
\tau_i^2 = 1 \text{ for } 1 \le i \le n-1\,,\\
\sigma_i \sigma_j = \sigma_j \sigma_i\,, \ 
\tau_i \tau_j = \tau_j \tau_i\,,\
\tau_i \sigma_j = \sigma_j \tau_i \text{ for } |i-j| \ge 2\,,\\
\sigma_i \sigma_j \sigma_i = \sigma_j \sigma_i \sigma_j\,,\
\tau_i \tau_j \tau_i = \tau_j \tau_i \tau_j\,,\
\tau_i \tau_j \sigma_i = \sigma_j \tau_i \tau_j \text{ for }|i-j| = 1\,.
\end{gather*}

For each $i \in \{1, \dots, n-1\}$ we set $s_i = (i,i+1)$.
It is well-known that the symmetric group $\SSS_n$ on $n$ letters has a presentation with generators $s_1, \dots, s_{n-1}$ and relations 
\[
s_i^2 = 1 \text{ for } 1 \le i \le n-1\,,\
s_i s_j = s_j s_i \text{ for } |i-j| \ge 2\,,\
s_i s_j s_i = s_j s_i s_j \text{ for } |i-j| = 1\,.
\]

Now, we define the homomorphisms that are involved in the paper. 
Let $G,H$ be two groups. 
We say that a homomorphism $\psi : G \to H$ is \emph{Abelian} if the image of $\psi$ is an Abelian subgroup of $H$.
Note that the abelianization of $\SSS_n$ is isomorphic to $\Z /2\Z$, hence the image of any Abelian homomorphism $\varphi : \SSS_n \to H$ is either trivial or a cyclic group of order $2$.  
On the other hand, the abelianization of $\VB_n$ is isomorphic to $\Z \times \Z/2\Z$, where the copy of $\Z$ is generated by the class of $\sigma_1$ and the copy of $\Z /2\Z$ is generated by the class of $\tau_1$.
Thus, the image of any Abelian homomorphism $\psi : \VB_n \to H$ is a quotient of $\Z \times \Z / 2\Z$.

From the presentations of $\VB_n$ and $\SSS_n$ given above we see that there are epimorphisms $\pi_P : \VB_n \to \SSS_n$ and $\pi_K : \VB_n \to \SSS_n$ defined by $\pi_P (\sigma_i) = \pi_P (\tau_j) = s_i$ for all $1 \le i \le n-1$ and by $\pi_K (\sigma_i) = 1$ and $\pi_K (\tau_j) = s_i$ for all $1 \le i \le n-1$, respectively. 
The kernel of $\pi_P$ is called the \emph{virtual pure braid group} and is denoted by $\VP_n$.
A presentation of this group can be found in Bardakov \cite{Barda1}.
It is isomorphic to the group of the Yang--Baxter equation studied in Bartholdi--Enriquez--Etingof--Rains \cite{BEER1}.
The kernel of $\pi_K$ does not have any particular name. 
It is denoted by $\KB_n$.
It is an Artin group, hence we can use tools from the theory of Artin groups to study it and get results on $\VB_n$ itself (see for example Godelle--Paris \cite{GodPar1} or Bellingeri--Cisneros de la Cruz--Paris \cite{BeCiPa1}).
This group will play a prominent role in our study.
It will be described and studied in Section \ref{sec3}.

Using again the presentations of $\SSS_n$ and $\VB_n$ we see that there is a homomorphism $\iota : \SSS_n \to \VB_n$ that sends $s_i$ to $\tau_i$ for all $1 \le i \le n-1$.
Observe that $\iota$ is a section of both, $\pi_P$ and $\pi_K$, hence $\iota$ is injective and we have the decompositions $\VB_n = \VP_n \rtimes \SSS_n$ and $\VB_n = \KB_n \rtimes \SSS_n$.

The two main automorphisms of $\VB_n$ that are involved in the paper are the automorphisms $\zeta_1, \zeta_2: \VB_n \to \VB_n$ defined by $\zeta_1 (\sigma_i) = \tau_i \sigma_i \tau_i$ and $\zeta_1 (\tau_i) = \tau_i$ for all $1 \le i \le n-1$, and by $\zeta_2 (\sigma_i) = \sigma_i^{-1}$ and $\zeta_2 (\tau_i) = \tau_i$ for all $1 \le i \le n-1$, respectively. 
It is easily checked that $\zeta_1$ and $\zeta_2$ are of order two and commute, hence they generate a subgroup of $\Aut (\VB_n)$ isomorphic to $\Z / 2 \Z \times \Z / 2 \Z$.

The last homomorphism (automorphism) concerned by the paper appears only for $n=6$.
This is the automorphism $\nu_6 : \SSS_6 \to \SSS_6$ defined by 
\begin{gather*}
\nu_6 (s_1) = (1,2)(3,4)(5,6)\,,\
\nu_6 (s_2) = (2,3)(1,5)(4,6)\,,\
\nu_6 (s_3) = (1,3)(2,4)(5,6)\,,\\
\nu_6 (s_4) = (1,2)(3,5)(4,6)\,,\
\nu_6 (s_5) = (2,3)(1,4)(5,6)\,.
\end{gather*}
It is well-known that $\Out (\SSS_n)$ is trivial for $n \neq 6$ and that $\Out (\SSS_6)$ is a cyclic group of order $2$ generated by the class of $\nu_6$.
Note also that $\nu_6^2$ is the conjugation by $w_0 = (1,6,2,5,3)$.

We give a last definition before passing to the statements.
Let $G, H$ be two groups. 
For each $\alpha \in H$ we denote by $c_\alpha : H \to H$, $\beta \mapsto \alpha \beta \alpha^{-1}$, the conjugation by $\alpha$.
We say that two homomorphisms $\psi_1, \psi_2 : G \to H$ are \emph{conjugate} and we write $\psi_1 \sim_c \psi_2$ if there exists $\alpha \in H$ such that $\psi_2 = c_\alpha \circ \psi_1$.

In Section \ref{sec4} we determine the homomorphisms from $\VB_n$ to $\SSS_m$ up to conjugation. 
More precisely we prove the following.

\begin{thm}\label{thm2_1}
Let $n,m \in \N$ such that $n \ge 5$, $m \ge 2$ and $n \ge m$, and let $\psi : \VB_n \to \SSS_m$ be a homomorphism. 
Then, up to conjugation, one of the following possibilities holds.
\begin{itemize}
\item[(1)]
$\psi$ is Abelian,
\item[(2)]
$n=m$ and $\psi \in \{ \pi_K, \pi_P \}$,
\item[(3)]
$n=m=6$ and $\psi \in \{ \nu_6 \circ \pi_K, \nu_6 \circ \pi_P \}$.
\end{itemize}
\end{thm}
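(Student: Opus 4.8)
The plan is to exploit the semi-direct product decomposition $\VB_n = \KB_n \rtimes \SSS_n$ together with a careful analysis of the images of the generators $\tau_i$. Given a homomorphism $\psi : \VB_n \to \SSS_m$, restrict it first to the subgroup $\iota(\SSS_n) \cong \SSS_n$. Since $n \ge 5 > m$ or $n = m$, the only homomorphisms $\SSS_n \to \SSS_m$ are well understood: for $n > m$ and $n \ge 5$ every such homomorphism has image of order at most $2$ (this is the classical fact that $\SSS_n$ has no proper subgroup of index less than $n$ for $n \ge 5$, together with the perfectness of $A_n$), whereas for $n = m$ every endomorphism of $\SSS_n$ is either Abelian (image of order $\le 2$) or an automorphism, and automorphisms are inner except when $n = 6$, where the extra class $\nu_6$ appears. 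So the restriction $\psi \circ \iota$ is, up to conjugating $\psi$, either Abelian, or the identity on $\SSS_n$ (only if $n = m$), or $\nu_6$ (only if $n = m = 6$).

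Next I would handle the case where $\psi \circ \iota$ is Abelian. In that case each $\psi(\tau_i)$ is an involution or trivial, and they all commute and are all conjugate, so they generate an elementary abelian $2$-group; moreover $\psi(\sigma_i) \psi(\sigma_j) = \psi(\sigma_j) \psi(\sigma_i)$ whenever $|i - j| \ge 2$, and the relations $\tau_i\tau_j\sigma_i = \sigma_j\tau_i\tau_j$ for $|i-j| = 1$ force strong constraints linking $\psi(\sigma_i)$ and $\psi(\sigma_{i+1})$. The key point is to show that when $\psi \circ \iota$ is Abelian, $\psi$ itself must be Abelian. Here I would use the fact (to be extracted from the structure results of Section \ref{sec3}, or proved directly) that $\KB_n$ is generated by the conjugates $\tau_j \sigma_i \tau_j$-type elements, and that the normal closure relations force the $\psi(\sigma_i)$ into the (abelian) group generated by the $\psi(\tau_i)$'s; combined with the braid relation $\sigma_i\sigma_{i+1}\sigma_i = \sigma_{i+1}\sigma_i\sigma_{i+1}$ and commutativity of the $\tau$-images this collapses the whole image to an abelian group, landing us in case (1).

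In the remaining case $\psi \circ \iota$ is injective, so after conjugation $\psi(\tau_i) = s_i$ for all $i$ (or $\psi(\tau_i) = \nu_6(s_i)$ when $n = m = 6$; treat this by precomposing the discussion with $\nu_6^{-1}$, which is allowed since $\nu_6$ is an automorphism, reducing to the case $\psi(\tau_i) = s_i$ at the cost of the extra possibilities in (3)). Now I must determine the possible values of $x_i := \psi(\sigma_i)$. The mixed relation $\tau_i\sigma_j = \sigma_j\tau_i$ for $|i-j| \ge 2$ gives that $x_j$ commutes with $s_i$ for $|i - j| \ge 2$; the relation $\tau_i\tau_j\sigma_i = \sigma_j\tau_i\tau_j$ for $|i-j| = 1$ gives $s_i s_{i+1} x_i = x_{i+1} s_i s_{i+1}$, i.e.\ $x_{i+1} = (s_i s_{i+1}) x_i (s_i s_{i+1})^{-1}$, so all the $x_i$ are determined by $x_1$ via conjugation by the $3$-cycles $(i,i+2,i+1)$. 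Finally the relations $x_ix_j = x_jx_i$ for $|i-j| \ge 2$, $x_ix_{i+1}x_i = x_{i+1}x_ix_{i+1}$, and the condition that $x_1$ commute with $s_3, \dots, s_{n-1}$ pin down $x_1$ to lie in the centralizer of $\langle s_3, \dots, s_{n-1}\rangle \cong \SSS_{n-2}$ inside $\SSS_m$; using $n \ge 5$ and $n \ge m$ this centralizer is small enough (it is contained in $\SSS_{\{1,2\}} \times Z$ for a tiny factor $Z$) that a short case check leaves only $x_1 \in \{1, s_1\}$ (giving $\psi = \pi_K$ or $\psi = \pi_P$) and, when $m < n$ would force extra room, the index bound rules that out. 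The main obstacle I expect is precisely this last step: showing that the centralizer and braid-relation constraints admit no exotic solution for $x_1$ (for instance no solution mixing a transposition on $\{1,2\}$ with a nontrivial permutation of leftover letters), and organizing the $n = m = 6$ bookkeeping so that the $\nu_6$-twisted solutions are accounted for exactly once; this is where the hypotheses $n \ge 5$ and $n \ge m$ are genuinely used, via the classification of low-index subgroups of $\SSS_n$ and the rigidity of $A_n$.
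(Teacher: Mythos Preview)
Your overall architecture matches the paper's proof exactly: restrict $\psi$ to $\iota(\SSS_n)$, invoke the classification of homomorphisms $\SSS_n \to \SSS_m$ (the paper's Proposition~\ref{prop4_1}), and treat the three resulting cases, with the $\nu_6$ case reducing to the identity case by composing with $\nu_6^{-1}$. Your non-Abelian case is also the paper's: the centralizer of $\langle s_3,\dots,s_{n-1}\rangle$ in $\SSS_n$ is exactly $\{1,s_1\}$ (your ``tiny factor $Z$'' is trivial once $n=m$, which is already forced), so $\psi(\sigma_1)\in\{1,s_1\}$, and the mixed relation propagates this to all $\psi(\sigma_i)$, yielding $\pi_K$ or $\pi_P$.

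The one place where your sketch goes astray is the Abelian case. You propose to force the $\psi(\sigma_i)$ ``into the (abelian) group generated by the $\psi(\tau_i)$'s,'' appealing to the structure of $\KB_n$ from Section~\ref{sec3}. That claim is false (nothing prevents $\psi(\sigma_1)$ from lying outside $\langle\psi(\tau_1)\rangle$) and the machinery is unnecessary. The paper's argument is elementary: since the abelianization of $\SSS_n$ is $\Z/2\Z$, ``$\psi\circ\iota$ Abelian'' means there is a \emph{single} element $w_1$ with $w_1^2=1$ and $\psi(\tau_i)=w_1$ for all $i$. Then $\tau_i\tau_{i+1}\sigma_i=\sigma_{i+1}\tau_i\tau_{i+1}$ gives $\psi(\sigma_i)=w_1^2\psi(\sigma_i)=\psi(\sigma_{i+1})w_1^2=\psi(\sigma_{i+1})$, so all $\psi(\sigma_i)$ equal a single $w_2$; finally $\tau_1\sigma_3=\sigma_3\tau_1$ gives $w_1w_2=w_2w_1$, and $\psi$ is Abelian. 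No input from Section~\ref{sec3} is needed anywhere in this theorem.
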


In Section \ref{sec5} we determine the homomorphisms from $\SSS_n$ to $\VB_m$ up to conjugation. 
More precisely we prove the following.

\begin{thm}\label{thm2_2} 
Let $n,m \in \N$ such that $n \ge 5$, $m \ge 2$ and $n \ge m$, and let $\varphi : \SSS_n \to \VB_m$ be a homomorphism. 
Then, up to conjugation, one of the following possibilities holds. 
\begin{itemize}
\item[(1)]
$\varphi$ is Abelian, 
\item[(2)]
$n=m$ and $\varphi = \iota$,
\item[(3)]
$n=m=6$ and $\varphi = \iota \circ \nu_6$.
\end{itemize}
\end{thm}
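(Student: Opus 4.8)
The plan is to classify homomorphisms $\varphi \colon \SSS_n \to \VB_m$ by first understanding the image of the transpositions, since $\SSS_n$ is generated by $s_1, \dots, s_{n-1}$ subject to the Coxeter relations. The key invariant to extract is the conjugacy class in $\VB_m$ of $\varphi(s_1)$ (all $s_i$ are conjugate in $\SSS_n$, so $\varphi(s_1), \dots, \varphi(s_{n-1})$ share a conjugacy class). Each $\varphi(s_i)$ is an involution or trivial. If all $\varphi(s_i)$ are trivial then $\varphi$ is trivial, hence Abelian. If some $\varphi(s_i) \ne 1$ but the image is Abelian we land in case (1), so assume from now on the image is non-Abelian. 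Then I would project via $\pi_K \colon \VB_m \to \SSS_m$: the composite $\pi_K \circ \varphi \colon \SSS_n \to \SSS_m$ is a homomorphism between symmetric groups with $n \ge 5$, $n \ge m$, so by the classical classification (which one can cite or reprove quickly using that $\SSS_n$ has few normal subgroups for $n \ge 5$) either $\pi_K \circ \varphi$ is Abelian (image in $\{1, \Z/2\Z\}$) or $n = m$ and $\pi_K \circ \varphi$ is an automorphism of $\SSS_m$, i.e. conjugate to $\id$ or, when $m = 6$, to $\nu_6$.

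Next I would handle the case where $\pi_K \circ \varphi$ is Abelian. Then $\varphi$ maps the commutator subgroup $\mathfrak{A}_n = [\SSS_n, \SSS_n]$ into $\KB_m = \ker \pi_K$. Since $\mathfrak{A}_n$ is a non-Abelian simple group (as $n \ge 5$) and $\KB_m$ is an Artin group, hence torsion-free, the restriction $\varphi|_{\mathfrak{A}_n}$ must be trivial: an Artin group contains no nontrivial finite subgroup, so it contains no subgroup isomorphic to a nontrivial finite simple group. Therefore $\varphi$ factors through $\SSS_n / \mathfrak{A}_n \cong \Z/2\Z$ and is Abelian, landing in case (1). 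This is the clean case.

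The substantive case is when $n = m$ and $\pi_K \circ \varphi$ is an automorphism of $\SSS_m$; up to composing $\varphi$ with an inner automorphism of $\VB_m$ (which does not change $\pi_K \circ \varphi$ up to conjugacy — this needs the observation that inner automorphisms of $\VB_m$ cover inner automorphisms of $\SSS_m$ via the section $\iota$) and, when $m = 6$, possibly replacing $\varphi$ by $\varphi \circ \nu_6$, I would reduce to $\pi_K \circ \varphi = \id_{\SSS_m}$. This means $\varphi$ is a section of $\pi_K$, i.e. $\varphi(\SSS_m)$ is a complement to $\KB_m$ in $\VB_m = \KB_m \rtimes \SSS_m$. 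At this point I invoke Lemma \ref{lem5_1}: there is a unique embedding of $\SSS_m$ into $\VB_m$ up to conjugation, so $\varphi$ is conjugate to $\iota$. Tracking the $m = 6$ branch, the original $\varphi$ is conjugate either to $\iota$ (case (2)) or to $\iota \circ \nu_6$ (case (3)); and one checks $\iota \circ \nu_6$ is genuinely not conjugate to $\iota$ because their composites with $\pi_K$ are $\nu_6$ and $\id$, which are non-conjugate in $\mathrm{Aut}(\SSS_6)$. The whole argument then reassembles into the three stated cases.

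The main obstacle is the reduction in the last paragraph: passing from ``$\pi_K \circ \varphi$ is an automorphism'' to ``$\varphi$ is (conjugate to) a section of $\pi_K$'' requires knowing that $\varphi$ is injective, and then identifying $\varphi(\SSS_m)$ with a complement — i.e. ruling out that $\varphi$ could be non-injective while still surjecting onto $\SSS_m$ under $\pi_K$. Here I would again use that $\mathfrak{A}_m$ is simple and $\KB_m$ is torsion-free: $\ker \varphi$ is a normal subgroup of $\SSS_m$ not containing $\mathfrak{A}_m$ unless $\varphi$ kills all of $\mathfrak{A}_m$ (excluded since then $\pi_K\circ\varphi$ couldn't be an automorphism), so $\ker\varphi$ is trivial and $\varphi$ is injective; then $\varphi(\SSS_m) \cap \KB_m$ is a finite subgroup of the torsion-free group $\KB_m$, hence trivial, so $\varphi(\SSS_m)$ is indeed a complement and Lemma \ref{lem5_1} applies. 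The bookkeeping for the $m = 6$ exceptional outer automorphism is fiddly but not deep.
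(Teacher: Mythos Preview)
Your proof is correct and follows the same overall strategy as the paper: project via $\pi_K$, classify $\pi_K \circ \varphi$ using Proposition~\ref{prop4_1}, and then invoke Lemma~\ref{lem5_1} in the non-Abelian branches. Your handling of the Abelian case---observing that $\varphi(\mathfrak{A}_n) \subset \KB_m$ and using torsion-freeness of $\KB_m$ to kill the finite group $\mathfrak{A}_n$ in one stroke---is actually cleaner than the paper's argument, which instead computes directly that $\varphi(s_1 s_i) = \alpha_1 \alpha_i^{-1} \in \KB_m$ is a finite-order element of a torsion-free group, hence trivial, forcing all $\varphi(s_i)$ to coincide. Your final paragraph overthinks the non-Abelian case: once you have arranged (by conjugating in $\VB_m$ via $\iota(w)$) that $\pi_K \circ \varphi = \id$, Lemma~\ref{lem5_1} applies directly to the homomorphism $\varphi$---injectivity of $\varphi$ and the fact that $\varphi(\SSS_m)$ is a complement to $\KB_m$ are automatic consequences of $\pi_K \circ \varphi = \id$, not prerequisites for applying the lemma.
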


Finally, in Section \ref{sec6} we determine the homomorphisms from $\VB_n$ to $\VB_m$ up to conjugation. 
More precisely we prove the following.

\begin{thm}\label{thm2_3}
Let $n,m \in \N$ such that $n \ge 5$, $m \ge 2$ and $n \ge m$, and let $\psi : \VB_n \to \VB_m$ be a homomorphism. 
Then, up to conjugation, one of the following possibilities holds. 
\begin{itemize}
\item[(1)]
$\psi$ is Abelian, 
\item[(2)]
$n=m$ and $\psi \in \{ \iota \circ \pi_K, \iota \circ \pi_P \}$,
\item[(3)]
$n=m=6$ and $\psi \in \{ \iota \circ \nu_6 \circ \pi_K, \iota \circ \nu_6 \circ \pi_P\}$,
\item[(4)]
$n=m$ and $\psi \in \{ \id, \zeta_1, \zeta_2, \zeta_1 \circ \zeta_2 \} = \langle \zeta_1, \zeta_2 \rangle$.
\end{itemize}
\end{thm}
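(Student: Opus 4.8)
The strategy is to combine Theorems \ref{thm2_1} and \ref{thm2_2} with the structural analysis of $\KB_m$ and of the conjugation action of $\SSS_m$ on it carried out in Section \ref{sec3}. Given $\psi : \VB_n \to \VB_m$, set $\bar\psi = \pi_K \circ \psi : \VB_n \to \SSS_m$. Since $\pi_K$ is onto, replacing $\psi$ by $c_\alpha \circ \psi$ replaces $\bar\psi$ by $c_{\pi_K(\alpha)} \circ \bar\psi$, and $\pi_K(\alpha)$ runs over all of $\SSS_m$; so by Theorem \ref{thm2_1} we may assume, after conjugating $\psi$, that one of the following holds: (a) $\bar\psi$ is Abelian; (b) $n = m$ and $\bar\psi \in \{\pi_K, \pi_P\}$; (c) $n = m = 6$ and $\bar\psi \in \{\nu_6 \circ \pi_K, \nu_6 \circ \pi_P\}$. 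A first useful remark is that $\psi$ is Abelian if and only if $\bar\psi$ is. One direction is clear. Conversely, if $\bar\psi$ is Abelian then $\pi_K \circ (\psi \circ \iota) = \bar\psi \circ \iota$ has Abelian image, so by Theorem \ref{thm2_2} the map $\psi \circ \iota : \SSS_n \to \VB_m$ can be neither conjugate to $\iota$ nor to $\iota \circ \nu_6$ (post-composing those with $\pi_K$ would give $\id$, resp.\ $\nu_6$, whose images are non-Abelian), hence $\psi \circ \iota$ is Abelian; thus $\psi(\tau_1) = \cdots = \psi(\tau_{n-1}) =: t$ with $t^2 = 1$, and feeding this into the relation $\tau_i \tau_j \sigma_i = \sigma_j \tau_i \tau_j$ (for $|i-j| = 1$) gives $\psi(\sigma_1) = \cdots = \psi(\sigma_{n-1}) =: g$, while $\tau_i \sigma_j = \sigma_j \tau_i$ (for $|i-j| \ge 2$) gives $tg = gt$, so $\operatorname{Im}(\psi) = \langle t, g \rangle$ is Abelian. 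This settles case (a) and yields conclusion (1).

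So suppose $\bar\psi$ is non-Abelian; then $n = m$ and we are in case (b) or (c). In each of these four situations $\bar\psi \circ \iota$ equals $\id_{\SSS_n}$ (when $\bar\psi \in \{\pi_K, \pi_P\}$, since $\pi_K \circ \iota = \pi_P \circ \iota = \id$) or $\nu_6$ (when $\bar\psi \in \{\nu_6 \circ \pi_K, \nu_6 \circ \pi_P\}$), so $\psi \circ \iota$ is injective and non-Abelian, and Theorem \ref{thm2_2} gives $\psi \circ \iota \sim_c \iota$ or $\psi \circ \iota \sim_c \iota \circ \nu_6$. Post-composing with $\pi_K$ and using that $\nu_6$ is \emph{not} inner (it represents the nontrivial class of $\Out(\SSS_6)$), we must have $\psi \circ \iota \sim_c \iota$ when $\bar\psi \circ \iota = \id$ and $\psi \circ \iota \sim_c \iota \circ \nu_6$ when $\bar\psi \circ \iota = \nu_6$. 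Writing the conjugating element as $\beta$, applying $\pi_K$ to the identity $\psi \circ \iota = c_\beta \circ \iota$ (resp.\ $c_\beta \circ \iota \circ \nu_6$) forces $c_{\pi_K(\beta)} = \id$ on $\SSS_n$, so $\pi_K(\beta) \in Z(\SSS_n) = \{1\}$ and $\beta \in \KB_m$; hence conjugating $\psi$ by $\beta^{-1}$ does not disturb $\bar\psi$, and we may assume $\psi(\tau_i) = \tau_i$ for all $i$ in case (b), and $\psi(\tau_i) = \iota(\nu_6(s_i))$ for all $i$ in case (c).

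It now suffices to determine $\psi(\sigma_1)$: from $\tau_i \tau_{i+1} \sigma_i = \sigma_{i+1} \tau_i \tau_{i+1}$ one gets $\psi(\sigma_{i+1}) = \psi(\tau_i \tau_{i+1})\, \psi(\sigma_i)\, \psi(\tau_i \tau_{i+1})^{-1}$, so $\psi$ is entirely determined by $g_1 := \psi(\sigma_1)$, and the remaining relations of $\VB_n$ become: $g_1$ centralizes $\psi(\tau_j)$ for $j \ge 3$ (from $\tau_j \sigma_1 = \sigma_1 \tau_j$), $g_1$ commutes with the determined conjugate $g_3 := \psi(\sigma_3)$ of $g_1$ (from $\sigma_1 \sigma_3 = \sigma_3 \sigma_1$), and $g_1 g_2 g_1 = g_2 g_1 g_2$ for the determined conjugate $g_2 := \psi(\sigma_2)$ (from $\sigma_1 \sigma_2 \sigma_1 = \sigma_2 \sigma_1 \sigma_2$). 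This is where Section \ref{sec3} enters: its description of $\KB_m$ and of the $\SSS_m$-action lets one compute the relevant centralizer, namely $C_{\VB_m}(\langle \tau_3, \dots, \tau_{m-1} \rangle)$ in case (b) and $C_{\VB_6}\bigl(\langle \iota(\nu_6(s_3)), \iota(\nu_6(s_4)), \iota(\nu_6(s_5)) \rangle\bigr)$ in case (c), after which the two braid-type conditions cut the admissible values of $g_1$ down to $\{1, \sigma_1, \sigma_1^{-1}, \tau_1 \sigma_1 \tau_1, \tau_1 \sigma_1^{-1} \tau_1, \tau_1\}$ in case (b) and to $\{1, \iota(\nu_6(s_1))\}$ in case (c). Reading off the associated homomorphism via the recursion above gives $\psi \in \{\iota \circ \pi_K, \id, \zeta_2, \zeta_1, \zeta_1 \circ \zeta_2, \iota \circ \pi_P\}$ in case (b) and $\psi \in \{\iota \circ \nu_6 \circ \pi_K, \iota \circ \nu_6 \circ \pi_P\}$ in case (c); together with conclusion (1) this is exactly the statement of the theorem.

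The main obstacle is this last reduction: proving, on the basis of the computations of Section \ref{sec3}, that no member of the relevant centralizer beyond the listed ones satisfies both the braid relation $g_1 g_2 g_1 = g_2 g_1 g_2$ and commutation with $g_3$. In case (b) this amounts to ruling out coset representatives such as $\sigma_1 \tau_1$, $\tau_1 \sigma_1$ and $\sigma_1^{\pm 1} \tau_1$ (all of which lift $s_1$ under $\pi_K$ when $\bar\psi = \pi_P$), and in case (c) to showing that the centralizer inside $\KB_6$ of the relevant \emph{non-standard} subgroup, intersected with the braid condition, is as small as claimed. Everything else is bookkeeping with the presentation of $\VB_n$ and the elementary identities satisfied by $\iota$, $\pi_K$, $\pi_P$, $\zeta_1$, $\zeta_2$ and $\nu_6$ recorded in Section \ref{sec2}.
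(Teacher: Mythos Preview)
Your overall architecture matches the paper's: normalize $\psi \circ \iota$ via Theorem~\ref{thm2_2} (your detour through Theorem~\ref{thm2_1} is harmless and lands in the same place), reduce everything to determining $g_1 = \psi(\sigma_1)$, and use the $\SSS_m$-action on $\KB_m$ to pin down $g_1$. The Abelian case and the bookkeeping are fine.

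The gap is exactly where you flag it. You assert that Section~\ref{sec3} alone lets you compute the centralizer and that ``the two braid-type conditions cut the admissible values of $g_1$ down'' to the six (resp.\ two) listed elements, but you do not carry this out, and Section~\ref{sec3} by itself is not enough. Concretely, in case~(b) the paper first uses Corollary~\ref{corl3_7b} and Theorem~\ref{thm3_2} to get $\alpha_1 \in \KB_n[\{\delta_{1,2},\delta_{2,1}\}]$, a free group of rank~$2$; writing $\alpha_1 = \omega(\delta_{1,2},\delta_{2,1})$, the braid relation $\sigma_1\sigma_2\sigma_1 = \sigma_2\sigma_1\sigma_2$ becomes an equation in $\KB_n[\WW]$ with $\WW = \{\delta_{i,j} : 1\le i\neq j\le 3\}$. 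Solving it requires the free-product splitting $\KB_n[\WW] = \KB_n[\WW_1] * \KB_n[\WW_2]$ (Lemma~\ref{lem3_3}), the auxiliary Lemma~\ref{lem6_0} (that $\delta_{i,j}^{\ell_1}\delta_{j,k}^{\ell_2}=1$ forces $\ell_1=\ell_2=0$), and then the Crisp--Paris theorem to exclude $\omega = x^k$ or $y^k$ with $|k|\ge 2$. None of these last three ingredients is in Section~\ref{sec3}, and they constitute the actual work. (The commutation with $g_3$ that you mention is not needed; the braid relation alone suffices.) In case~(c) the required input is Lemma~\ref{lem6_1}, which shows $\KB_6^H = \{1\}$ for $H = \langle \nu_6(s_3),\nu_6(s_4),\nu_6(s_5)\rangle$; this immediately forces $\alpha_1 = 1$, and it is again proved in Section~\ref{sec6}, not Section~\ref{sec3}.

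So what you have is a correct outline with the substantive middle missing: to complete it you need to state and prove the analogues of Lemmas~\ref{lem6_0} and~\ref{lem6_1}, invoke Crisp--Paris, and actually run the normal-form argument in the free product $\KB_n[\WW_1]*\KB_n[\WW_2]$ to solve Equations~(\ref{eq6_1}) and~(\ref{eq6_2}).
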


A group $G$ is called \emph{Hopfian} if every surjective homomorphism $\psi : G \to G$ is also injective.
On the other hand, it is called \emph{co-Hopfian} if every injective homomorphism $\psi : G \to G$ is also surjective. 
It is known that the braid group $B_n$ is Hopfian but not co-Hopfian. 
However, it is quasi-co-Hopfian by Bell--Margalit \cite{BelMar1}.
The property of hopfianity for the braid group $B_n$ follows from the fact that it can be embedded in the automorphism group of the free group $F_n$ (see Artin \cite{Artin2}).
We cannot apply such an argument for the virtual braid group $\VB_n$ since we do not know if it can be embedded into the automorphism group of a finitely generated free group.
A first consequence of Theorem \ref{thm2_3} is the following.

\begin{corl}\label{corl2_4}
Let $n \in \N$, $n \ge 5$.
Then $\VB_n$ is Hopfian and co-Hopfian.
\end{corl}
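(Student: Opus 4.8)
The plan is to deduce Corollary \ref{corl2_4} directly from the classification in Theorem \ref{thm2_3} applied to the case $n=m$. Suppose $\psi : \VB_n \to \VB_n$ is a homomorphism. We must show that $\psi$ is injective if and only if it is surjective, so it suffices to examine each of the possibilities (1)--(4) of Theorem \ref{thm2_3} (up to conjugation, which affects neither injectivity nor surjectivity) and check in each case that either $\psi$ is both injective and surjective, or it is neither. In cases (1), (2) and (3) we will argue that $\psi$ is neither injective nor surjective, and in case (4) that $\psi$ is an automorphism.

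First I would treat case (4): each of $\id$, $\zeta_1$, $\zeta_2$, $\zeta_1\circ\zeta_2$ is an automorphism of $\VB_n$ by the discussion preceding the statement of Theorem \ref{thm2_3} (they were introduced as automorphisms of order two that commute), so in this case $\psi$ is both injective and surjective, consistent with both Hopfian and co-Hopfian. For case (1), if $\psi$ is Abelian then its image is a quotient of $\Z \times \Z/2\Z$, hence Abelian; since $\VB_n$ is non-Abelian for $n \ge 3$ (e.g. $\sigma_1$ and $\sigma_2$ do not commute), $\psi$ is not surjective, and since $\VB_n$ is non-Abelian its image cannot be all of $\VB_n$ and in particular $\psi$ is far from injective — concretely, $[\VB_n,\VB_n]$ is nontrivial and contained in $\ker\psi$. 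So in case (1) $\psi$ is neither injective nor surjective.

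For cases (2) and (3), the relevant homomorphisms are $\iota\circ\pi_K$, $\iota\circ\pi_P$, and (when $n=6$) $\iota\circ\nu_6\circ\pi_K$, $\iota\circ\nu_6\circ\pi_P$. In each case $\psi$ factors through one of the epimorphisms $\pi_K$ or $\pi_P$ onto $\SSS_n$ followed by an injection into $\VB_n$ (the maps $\iota$, and $\nu_6$, are injective, being respectively a section and an automorphism), so the image of $\psi$ is isomorphic to $\SSS_n$, which is finite, hence a proper subgroup of the infinite group $\VB_n$; thus $\psi$ is not surjective. Moreover $\ker\psi = \ker\pi_K = \KB_n$ or $\ker\psi = \ker\pi_P = \VP_n$, and both $\KB_n$ and $\VP_n$ are nontrivial (for $n \ge 2$; e.g. $\sigma_1 \in \KB_n$ and $\sigma_1\tau_1 \in \VP_n$ are nontrivial), so $\psi$ is not injective either. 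Hence in cases (2) and (3) $\psi$ is neither injective nor surjective.

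Combining the four cases: every surjective endomorphism of $\VB_n$ falls in case (4) and is therefore injective, so $\VB_n$ is Hopfian; every injective endomorphism of $\VB_n$ falls in case (4) and is therefore surjective, so $\VB_n$ is co-Hopfian. This completes the proof. The only mild subtlety — not really an obstacle — is to make sure that the normalization "up to conjugation" in Theorem \ref{thm2_3} does not lose anything: but conjugation by an element of $\VB_n$ is an automorphism of $\VB_n$, so $\psi$ is injective (resp. surjective) if and only if $c_\alpha \circ \psi$ is, and the argument goes through verbatim.
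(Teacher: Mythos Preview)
Your proof is correct and follows essentially the same approach as the paper: you invoke Theorem \ref{thm2_3} and observe that only the homomorphisms in case (4) are injective or surjective, and these are all automorphisms. The paper's proof is simply a terser version of yours, omitting the explicit verification that cases (1)--(3) yield maps that are neither injective nor surjective.
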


\begin{proof}
We see in Theorem \ref{thm2_3} that, up to conjugation, the only surjective homomorphisms from $\VB_n$ to $\VB_n$ are the elements of $\{ \id, \zeta_1, \zeta_2, \zeta_1 \circ \zeta_2 \} = \langle \zeta_1, \zeta_2 \rangle$, and they are all automorphisms, hence $\VB_n$ is Hopfian. 
We show in the same way that $\VB_n$ is co-Hopfian. 
\end{proof}

Recall that, by Dyer--Grossman \cite{DyeGro1}, the group $\Out (B_n)$ is isomorphic to $\Z / 2 \Z$.
Here we show that $\Out (\VB_n)$ is a little bigger, that is:

\begin{corl}\label{corl2_5}
Let $n \in \N$, $n \ge 5$.
Then $\Out (\VB_n)$ is isomorphic to $\Z / 2 \Z \times \Z / 2 \Z$, and is generated by the classes of $\zeta_1$ and $\zeta_2$.
\end{corl}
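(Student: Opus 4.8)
The proof will deduce the structure of $\Out(\VB_n)$ directly from Theorem \ref{thm2_3} applied with $n=m$. First I would observe that every automorphism of $\VB_n$ is in particular a surjective homomorphism $\VB_n \to \VB_n$, so Theorem \ref{thm2_3} lists, up to conjugation, all the candidates. The Abelian homomorphisms of case (1) are not surjective (their image is a quotient of $\Z \times \Z/2\Z$, which is far from all of $\VB_n$ since, e.g., $\VB_n$ is nonabelian for $n \ge 2$), and likewise the homomorphisms in cases (2) and (3) factor through the finite group $\SSS_n$ and so cannot be surjective onto the infinite group $\VB_n$. Hence every automorphism of $\VB_n$ is conjugate to an element of $\langle \zeta_1, \zeta_2 \rangle = \{\id, \zeta_1, \zeta_2, \zeta_1 \circ \zeta_2\}$, which was already noted in the excerpt to be a subgroup of $\Aut(\VB_n)$ isomorphic to $\Z/2\Z \times \Z/2\Z$.

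Next I would translate this into a statement about $\Out(\VB_n) = \Aut(\VB_n)/\mathrm{Inn}(\VB_n)$. The previous paragraph shows that the composite map $\langle \zeta_1, \zeta_2 \rangle \hookrightarrow \Aut(\VB_n) \twoheadrightarrow \Out(\VB_n)$ is surjective: every class in $\Out(\VB_n)$ has a representative in $\langle \zeta_1, \zeta_2 \rangle$. So $\Out(\VB_n)$ is a quotient of $\Z/2\Z \times \Z/2\Z$, generated by the classes of $\zeta_1$ and $\zeta_2$. It remains to check that this map is also injective, i.e.\ that no nontrivial element of $\langle \zeta_1, \zeta_2 \rangle$ is an inner automorphism, equivalently that $\zeta_1$, $\zeta_2$ and $\zeta_1 \circ \zeta_2$ are each non-inner.

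The main obstacle is precisely this last point: showing the three nontrivial $\zeta$'s are not inner. I would argue at the level of the abelianization $\VB_n^{\mathrm{ab}} \cong \Z \times \Z/2\Z$, on which every inner automorphism acts trivially. Now $\zeta_2$ sends the class of $\sigma_1$ to its inverse and fixes the class of $\tau_1$, so it acts on $\VB_n^{\mathrm{ab}}$ as $-1$ on the $\Z$ factor and trivially on the $\Z/2\Z$ factor; in particular $\zeta_2$ acts nontrivially on $\VB_n^{\mathrm{ab}}$, hence is not inner. Similarly $\zeta_1 \circ \zeta_2$ acts as $-1$ on the $\Z$ factor (since $\zeta_1$ induces the identity on $\VB_n^{\mathrm{ab}}$, as $\tau_i \sigma_i \tau_i$ and $\sigma_i$ have the same image there), so $\zeta_1 \circ \zeta_2$ is not inner either. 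For $\zeta_1$ itself, which does induce the identity on $\VB_n^{\mathrm{ab}}$, a different invariant is needed: I would use that an inner automorphism $c_\alpha$ preserves the conjugacy class of every element, whereas $\zeta_1(\sigma_1) = \tau_1 \sigma_1 \tau_1$ is not conjugate to $\sigma_1$ in $\VB_n$ — this can be seen, for instance, by passing to the quotient $\VB_n \to \SSS_n$ by $\pi_P$ composed with suitable invariants, or more simply by noting that if $\zeta_1 = c_\alpha$ were inner then $\zeta_1 \circ \zeta_2 = c_\alpha \circ \zeta_2$ would have the same action on $\VB_n^{\mathrm{ab}}$ as $\zeta_2$, a contradiction once one knows $\zeta_2 \ne \id$ in $\Out(\VB_n)$ and that $\zeta_1 \circ \zeta_2$, $\zeta_2$ have already been shown distinct in $\Out(\VB_n)$ — wait, that circular route must be avoided, so I would instead directly exhibit a homomorphism from $\VB_n$ to a group on which $\zeta_1$ acts nontrivially but all conjugations act trivially on the relevant element, such as mapping $\sigma_1 \mapsto$ a generator in $\Z/2\Z * \Z/2\Z$-type quotient distinguishing $\sigma_1$ from $\tau_1\sigma_1\tau_1$; the cleanest choice is the abelianization together with the observation that $\zeta_1$ and $\id$ become conjugate in $\Out$ only if $\zeta_1$ is inner, and checking non-innerness of $\zeta_1$ by its nontrivial action on $\KB_n^{\mathrm{ab}}$ viewed as an $\SSS_n$-module, which the results of Section \ref{sec3} make explicit. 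Once all three nontrivial elements are shown non-inner, the map $\langle \zeta_1, \zeta_2 \rangle \to \Out(\VB_n)$ is an isomorphism, completing the proof.
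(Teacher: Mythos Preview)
Your overall architecture matches the paper's: use Theorem \ref{thm2_3} to see that $\Out(\VB_n)$ is generated by the classes of $\zeta_1,\zeta_2$, then show that none of $\zeta_1,\zeta_2,\zeta_1\zeta_2$ is inner. Your treatment of $\zeta_2$ and $\zeta_1\zeta_2$ via the abelianization $\VB_n^{\mathrm{ab}}\cong\Z\times\Z/2\Z$ is correct and is exactly what the paper does.

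The genuine gap is the case of $\zeta_1$. You cycle through several ideas without landing on one that works. The conjugacy-class idea fails immediately: $\zeta_1(\sigma_1)=\tau_1\sigma_1\tau_1$ \emph{is} conjugate to $\sigma_1$ (by $\tau_1$), so individual conjugacy classes are preserved. Your final suggestion, to detect $\zeta_1$ via its action on $\KB_n^{\mathrm{ab}}$ as an $\SSS_n$-module, also fails: for $n\ge 4$ the braid relations $\delta_{i,j}\delta_{j,k}\delta_{i,j}=\delta_{j,k}\delta_{i,j}\delta_{j,k}$ force all generators $\delta_{i,j}$ to coincide in the abelianization (e.g.\ $\delta_{1,2}=\delta_{2,3}=\delta_{3,4}=\delta_{4,2}=\delta_{2,1}$), so $\KB_n^{\mathrm{ab}}\cong\Z$ with both $\SSS_n$ and $\zeta_1$ acting trivially. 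Nothing is detected there.

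The paper isolates this step as Lemma \ref{lem2_6} and proves it with real content from Section \ref{sec3}: if $\zeta_1=c_\gamma$, one first forces $\gamma\in\KB_n$ (since $\pi_K\circ\zeta_1=\pi_K$ pins down the $\SSS_n$-part of $\gamma$ as central, hence trivial), and then shows $\KB_n^{\zeta_1}=\{1\}$ by repeatedly splitting $\KB_n$ as an amalgam $\KB_n[\UU_{i,j}']*_{\KB_n[\UU_{i,j}]}\KB_n[\UU_{i,j}'']$ with $\zeta_1$ swapping the two factors (Lemma \ref{lem3_6}), and intersecting over all $i<j$ via Theorem \ref{thm3_2}. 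That amalgamated-product argument is the missing idea in your proposal.
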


The proof of Corollary \ref{corl2_5} relies on the following lemma whose proof is given in Section \ref{sec3}.

\begin{lem}\label{lem2_6}
Let $n \in \N$, $n \ge 5$.
Then $\zeta_1$ is not an inner automorphism of $\VB_n$.
\end{lem}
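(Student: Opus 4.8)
The plan is to exhibit an invariant that distinguishes $\zeta_1$ from every inner automorphism. Recall that $\zeta_1$ fixes each $\tau_i$ and sends $\sigma_i$ to $\tau_i \sigma_i \tau_i$; in particular $\zeta_1$ acts trivially on the subgroup $\iota(\SSS_n) = \langle \tau_1, \dots, \tau_{n-1}\rangle$ and it descends to the identity on the abelianization $\VB_n^{\mathrm{ab}} \cong \Z \times \Z/2\Z$ (since $\tau_i \sigma_i \tau_i$ and $\sigma_i$ have the same image there). So if $\zeta_1$ were inner, say $\zeta_1 = c_\alpha$ for some $\alpha \in \VB_n$, then $\alpha$ would centralize $\iota(\SSS_n)$, and moreover $\alpha$ would lie in $\VP_n$ (equivalently $\pi_P(\alpha)$ would be central in $\SSS_n$, hence trivial for $n \ge 3$) — actually more carefully, $c_\alpha$ fixing the class of $\sigma_1$ and of $\tau_1$ in the abelianization forces $\alpha$ into the kernel of the map $\VB_n \to \mathrm{Aut}(\VB_n^{\mathrm{ab}})$, which for our purposes just says $\pi_P(\alpha)$ commutes with every $s_i$, so $\pi_P(\alpha) = 1$ and $\alpha \in \VP_n$.

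The key step is then the computation of the centralizer of $\iota(\SSS_n)$ in $\VB_n$. First I would show that $C_{\VB_n}(\iota(\SSS_n))$ is contained in the centralizer $C_{\VP_n}(\iota(\SSS_n))$ after the reduction above, and then that the latter is trivial — or at worst is a small central-type subgroup on which $c_\alpha$ is the identity, contradicting the fact that $\zeta_1 \neq \id$ (which holds because $\tau_1 \sigma_1 \tau_1 \neq \sigma_1$ in $\VB_n$, as one sees from the decomposition $\VB_n = \KB_n \rtimes \SSS_n$: the element $\tau_1 \sigma_1 \tau_1 \sigma_1^{-1}$ is a nontrivial element of $\KB_n$). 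To see that $C_{\VP_n}(\iota(\SSS_n))$ is trivial I would use the semidirect product structure $\VB_n = \KB_n \rtimes \SSS_n$ together with the detailed description of the $\SSS_n$-action on $\KB_n$ developed in Section \ref{sec3}: an element $\beta \in \KB_n$ commuting with every $\tau_i$ is a fixed point of the whole $\SSS_n$-action, and the analysis of this action (which is the technical heart of the paper) should show the fixed subgroup is trivial for $n \ge 5$. An element of $\VP_n$ commuting with $\iota(\SSS_n)$ is conjugate, via the splitting, into such a fixed set, so it must be trivial as well.

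The main obstacle is precisely the last point: computing, or at least bounding, the fixed subgroup of the $\SSS_n$-action on $\KB_n$ (equivalently the centralizer of $\iota(\SSS_n)$ in $\VB_n$). This requires the structural results of Section \ref{sec3} on $\KB_n$ as an Artin group and on how the Coxeter-type generators $\tau_i$ permute and conjugate its generators; everything else is a routine reduction using the abelianization and the semidirect product decomposition. Once the fixed subgroup is known to be trivial (or central and hence fixed pointwise by $c_\alpha$), the contradiction with $\zeta_1 \neq \id$ is immediate and the lemma follows.
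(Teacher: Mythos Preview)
Your plan is correct and close to the paper's, with one small cleanup and one genuine variation to note.

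The cleanup: the detour through $\VP_n$ is unnecessary and the phrase ``conjugate, via the splitting, into such a fixed set'' is vague. Simply apply $\pi_K$ (rather than $\pi_P$) to the identities $\alpha \tau_i \alpha^{-1} = \tau_i$; this shows $\pi_K(\alpha)$ is central in $\SSS_n$, hence trivial, so $\alpha \in \KB_n$ directly. Then commuting with every $\tau_i$ says precisely that $\alpha \in \KB_n^{\SSS_n}$, and the ``main obstacle'' you identify is in fact immediate from Section~\ref{sec3}: Corollary~\ref{corl3_7b} with $\XX = \SS$ gives $\KB_n^{s_k} = \KB_n[\UU_k]$ for each $k$, and Theorem~\ref{thm3_2} then yields $\KB_n^{\SSS_n} \subset \bigcap_{k} \KB_n[\UU_k] = \KB_n\bigl[\bigcap_k \UU_k\bigr] = \KB_n[\emptyset] = \{1\}$.

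The variation: the paper makes the same reduction to $\gamma = \alpha \in \KB_n$ via $\pi_K$, but then uses a different invariant. Rather than using that $\alpha$ centralizes $\iota(\SSS_n)$, it observes that $\zeta_1(\alpha) = c_\alpha(\alpha) = \alpha$, so $\alpha \in \KB_n^{\zeta_1}$, and computes $\KB_n^{\zeta_1} = \{1\}$ from the fact that $\zeta_1$ swaps $\delta_{i,j} \leftrightarrow \delta_{j,i}$, applying Lemma~\ref{lem3_6} and Theorem~\ref{thm3_2} directly. Your route has the advantage that the required fixed-point statement is already packaged as Corollary~\ref{corl3_7b}; the paper's route trades that for the extra observation about how $\zeta_1$ acts on the generators $\delta_{i,j}$ of $\KB_n$.
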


\begin{proof}[Proof of Corollary \ref{corl2_5}]
It follows from Theorem \ref{thm2_3} that $\Out (\VB_n)$ is generated by the classes of $\zeta_1$ and $\zeta_2$.
We also know that these two automorphisms are of order two and commute. 
So, it suffices to show that none of the elements $\zeta_1, \zeta_2, \zeta_1 \circ \zeta_2$ is an inner automorphism. 
It is easily seen from its presentation that the abelianization of $\VB_n$ is isomorphic to $\Z \times \Z / 2 \Z$, where the copy of $\Z$ is generated by the class of $\sigma_1$ and the copy of $\Z / 2 \Z$ is generated by the class of $\tau_1$.
Since $\zeta_2 (\sigma_1) = \sigma_1^{-1}$, $\zeta_2$ acts non-trivially on the abelianization of $\VB_n$, hence $\zeta_2$ is not an inner automorphism. 
The transformation $\zeta_1 \circ \zeta_2$ is not an inner automorphism for the same reason, and $\zeta_1$ is not an inner automorphism by Lemma \ref{lem2_6}.
\end{proof}

In order to determine $\Out (B_n)$, Dyer and Grossman \cite{DyeGro1} use also another result of Artin \cite{Artin1} which says that the pure braid group on $n$ strands is a characteristic subgroup of $B_n$.
From Theorem \ref{thm2_3} it immediately follows that the equivalent statement for virtual braid groups holds. 
More precisely, we have the following.

\begin{corl}\label{corl2_7}
Let $n \in \N$, $n \ge 5$.
Then the groups $\VP_n$ and $\KB_n$ are both characteristic subgroups of $\VB_n$.
\end{corl}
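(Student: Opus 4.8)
The plan is to deduce this directly from Theorem \ref{thm2_3} by applying it to automorphisms of $\VB_n$ and tracking where the relevant normal subgroups go. First I would recall that $\VP_n = \ker \pi_P$ and $\KB_n = \ker \pi_K$, so it suffices to show that every automorphism $\psi \in \Aut(\VB_n)$ preserves each of these two kernels. Since $n \ge 5$, $m = n$, and $\psi$ is in particular a surjective homomorphism $\VB_n \to \VB_n$, Theorem \ref{thm2_3} tells us that, up to conjugation, $\psi$ lies in $\{\id, \zeta_1, \zeta_2, \zeta_1 \circ \zeta_2\}$ (the Abelian possibilities and the cases in (2), (3) have non-surjective image, so they are excluded). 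Conjugation by an element of $\VB_n$ obviously preserves any normal subgroup, so it is enough to check that each of $\zeta_1$ and $\zeta_2$ maps $\VP_n$ into $\VP_n$ and $\KB_n$ into $\KB_n$; the composite $\zeta_1 \circ \zeta_2$ then follows automatically.

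Next I would verify these stability properties on generators, which is immediate from the explicit formulas. We have $\zeta_2(\sigma_i) = \sigma_i^{-1}$ and $\zeta_2(\tau_i) = \tau_i$; since $\pi_P(\sigma_i^{-1}) = s_i = \pi_P(\sigma_i)$ and $\pi_K(\sigma_i^{-1}) = 1 = \pi_K(\sigma_i)$, the homomorphisms $\pi_P \circ \zeta_2$ and $\pi_P$ agree on all generators, hence $\pi_P \circ \zeta_2 = \pi_P$, and likewise $\pi_K \circ \zeta_2 = \pi_K$; therefore $\zeta_2(\VP_n) = \VP_n$ and $\zeta_2(\KB_n) = \KB_n$. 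For $\zeta_1$ we have $\zeta_1(\sigma_i) = \tau_i \sigma_i \tau_i$ and $\zeta_1(\tau_i) = \tau_i$, so $\pi_P(\tau_i \sigma_i \tau_i) = s_i s_i s_i = s_i = \pi_P(\sigma_i)$ and hence $\pi_P \circ \zeta_1 = \pi_P$, giving $\zeta_1(\VP_n) = \VP_n$. For $\KB_n$ one computes $\pi_K(\tau_i \sigma_i \tau_i) = s_i \cdot 1 \cdot s_i = 1 = \pi_K(\sigma_i)$, so $\pi_K \circ \zeta_1 = \pi_K$ and $\zeta_1(\KB_n) = \KB_n$ as well. (Here I am using that $\zeta_1, \zeta_2$ are bijective, which is recorded in Section \ref{sec2}, so the inclusions $\zeta_j(\VP_n) \subseteq \VP_n$ etc.\ are in fact equalities.)

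Putting these together: an arbitrary $\psi \in \Aut(\VB_n)$ factors as $\psi = c_\alpha \circ \psi_0$ with $\alpha \in \VB_n$ and $\psi_0 \in \langle \zeta_1, \zeta_2 \rangle$; both factors preserve $\VP_n$ and $\KB_n$, hence so does $\psi$. Since this holds for every automorphism, $\VP_n$ and $\KB_n$ are characteristic subgroups of $\VB_n$. I do not anticipate any real obstacle here — the only point requiring a moment's care is the bookkeeping needed to rule out cases (1)--(3) of Theorem \ref{thm2_3} as candidates for automorphisms, which is why one first reduces to surjective homomorphisms and observes that in cases (2) and (3) the image lies in the (proper) subgroup $\iota(\SSS_n)$ (possibly post-composed with $\zeta_j$, still proper), and in case (1) the image is Abelian, hence proper since $\VB_n$ is non-Abelian for $n \ge 2$.
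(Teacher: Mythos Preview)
Your proof is correct and is exactly the argument the paper has in mind: the paper gives no separate proof of Corollary \ref{corl2_7}, stating only that it ``immediately follows'' from Theorem \ref{thm2_3}, and you have simply written out that immediate deduction. The minor parenthetical about ``possibly post-composed with $\zeta_j$'' in your last paragraph is superfluous (cases (2) and (3) already have image contained in $\iota(\SSS_n)$), but this does not affect the argument.
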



\section{Preliminaries}\label{sec3}

Let $n \ge 3$.
Recall that we have an epimorphism $\pi_K : \VB_n \to \SSS_n$ which sends $\sigma_i$ to $1$ and $\tau_i$ to $s_i$ for all $1 \le i \le n-1$.
Recall also that $\KB_n$ denotes the kernel of $\pi_K$ and that we have the decomposition $\VB_n = \KB_n
\rtimes \SSS_n$.
The aim of the present section is to prove three technical results on the action of $\SSS_n$ on $\KB_n$ (Lemma \ref{lem3_7}, Lemma \ref{lem3_9} and Lemma \ref{lem3_11}). 
These three lemmas are key points in the proofs of Theorem \ref{thm2_2} and Theorem \ref{thm2_3}.
We think also that they are interesting by themselves and may be use in the future for other purposes.
The two main tools in the proofs of these lemmas are the Artin groups and the amalgamated products.

The following is proved in Rabenda's master thesis at the Universit\'e de Bourgogne in Dijon in 2003.
This thesis is actually unavailable but the proof of the proposition can also be found in Bardakov--Bellingeri \cite{BarBel1}.

\begin{prop}\label{prop3_1}
For $1 \le i < j \le n$ we set 
\[ 
\begin{array}{rcl}
\delta_{i,j} &=& \tau_i \tau_{i+1} \cdots \tau_{j-2} \sigma_{j-1} \tau_{j-2} \cdots \tau_{i+1} \tau_i\,,\\
\noalign{\smallskip}
\delta_{j,i} &=& \tau_i \tau_{i+1} \cdots \tau_{j-2}\tau_{j-1} \sigma_{j-1} \tau_{j-1} \tau_{j-2} \cdots \tau_{i+1} \tau_i\,.\\
\end{array}\]
Then $\KB_n$ has a presentation with generating set
\[
\SS = \{ \delta_{i,j} \mid 1 \le i \neq j \le n\}\,,
\]
and relations 
\[\begin{array}{cl}
\delta_{i,j} \delta_{k,\ell} = \delta_{k,\ell} \delta_{i,j} &\quad \text{for } i,j,k,\ell \text{ pairwise distinct}\,,\\
\noalign{\smallskip}
\delta_{i,j} \delta_{j,k} \delta_{i,j} = \delta_{j,k} \delta_{i,j} \delta_{j,k} &\quad \text{for } i,j,k \text{ pairwise distinct}\,.
\end{array}
\]
Moreover, the action of each $w \in \SSS_n$ on a generator $\delta_{i,j}$ is by permutation of the indices, that is, $w (\delta_{i,j}) =\delta_{w(i),w(j)}$.
\end{prop}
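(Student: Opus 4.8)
The plan is to apply the Reidemeister--Schreier rewriting method to the split short exact sequence $1\to\KB_n\to\VB_n\to\SSS_n\to 1$ afforded by $\pi_K$, taking as Schreier transversal the \emph{subgroup} $T=\iota(\SSS_n)$ of $\VB_n$. The point of using an honest complement is that for every $t\in T$ and every $1\le j\le n-1$ we have $t\tau_j\in T$, so the Schreier generator $t\tau_j\,\overline{t\tau_j}^{-1}$ is trivial and may be deleted; since moreover $\pi_K(\sigma_i)=1$, the coset representative is unchanged when one reads a letter $\sigma_i$, and the surviving Schreier generators are exactly the elements $\gamma_{w,i}:=\iota(w)\,\sigma_i\,\iota(w)^{-1}$ for $w\in\SSS_n$ and $1\le i\le n-1$. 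These generate $\KB_n$.

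The next step is to cut down this generating set. Rewriting the relator $\tau_j\sigma_i\tau_j\sigma_i^{-1}$ of $\VB_n$ (with $|i-j|\ge 2$) and discarding trivial generators gives the relation $\gamma_{ws_j,i}=\gamma_{w,i}$; iterating, $\gamma_{w,i}$ depends only on the class of $w$ modulo the subgroup $\langle s_j\mid |i-j|\ge 2\rangle$, which is precisely the pointwise stabilizer of $\{i,i+1\}$ in $\SSS_n$. Hence $\gamma_{w,i}$ depends only on the ordered pair $(w(i),w(i+1))$, and conversely every ordered pair of distinct elements of $\{1,\dots,n\}$ is attained. I would therefore set $\delta_{a,b}:=\gamma_{w,i}$ for any $w,i$ with $(w(i),w(i+1))=(a,b)$; this is the generating set $\SS$, and a direct check identifies each $\delta_{i,j}$ with the word in the statement (e.g. $\delta_{i,i+1}=\sigma_i$, and for $i<j$ one conjugates $\sigma_{j-1}$ by $\iota(s_is_{i+1}\cdots s_{j-2})=\tau_i\tau_{i+1}\cdots\tau_{j-2}$, which carries $j-1$ to $i$ and fixes $j$; similarly for $\delta_{j,i}$). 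The action formula is then immediate: since the $\SSS_n$-action on $\KB_n$ is conjugation by $\iota(\SSS_n)$, for any $u\in\SSS_n$ we get $u(\delta_{a,b})=\iota(u)\iota(w)\sigma_i\iota(w)^{-1}\iota(u)^{-1}=\gamma_{uw,i}=\delta_{uw(i),uw(i+1)}=\delta_{u(a),u(b)}$.

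It then remains to rewrite the remaining defining relators of $\VB_n$. Those involving only the $\tau_i$'s rewrite to empty words. The relator $\sigma_i\sigma_j\sigma_i^{-1}\sigma_j^{-1}$ with $|i-j|\ge 2$ rewrites, from the representative $\iota(w)$, to $\delta_{a,b}\delta_{c,d}\delta_{a,b}^{-1}\delta_{c,d}^{-1}=1$ where $(a,b)=(w(i),w(i+1))$ and $(c,d)=(w(j),w(j+1))$; disjointness of $\{i,i+1\}$ and $\{j,j+1\}$ makes $a,b,c,d$ pairwise distinct, and letting $w$ vary one recovers exactly the commutation relations for pairwise distinct indices. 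The relator $\sigma_i\sigma_j\sigma_i\sigma_j^{-1}\sigma_i^{-1}\sigma_j^{-1}$ with $|i-j|=1$ rewrites to $\delta_{a,b}\delta_{b,c}\delta_{a,b}=\delta_{b,c}\delta_{a,b}\delta_{b,c}$, the common index of the two adjacent pairs forcing the shared middle label $b$; here $a,b,c$ are pairwise distinct, and letting $w$ vary gives precisely the stated braid-type relations. Finally the mixed relators $\tau_i\tau_{i\pm1}\sigma_i\tau_{i\pm1}\tau_i\sigma_{i\pm1}^{-1}$ rewrite to empty words: they have already been absorbed into the identification $\gamma_{w,i}\leftrightarrow\delta_{w(i),w(i+1)}$. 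Assembling everything yields exactly the presentation of $\KB_n$ in the statement.

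The part needing real care is the Reidemeister--Schreier bookkeeping: one must verify that the identification ``$\gamma_{w,i}$ depends only on $(w(i),w(i+1))$'' is genuinely forced by the relators (rather than assumed), that the relevant subgroup of $\SSS_n$ really is the pointwise stabilizer of $\{i,i+1\}$, that the mixed $\tau\tau\sigma$-relators contribute nothing new, and that for each surviving family of relations every admissible index-tuple occurs and no others. Each individual verification is routine, but it is easy to mistrack a coset representative or to misread one of the defining relators.
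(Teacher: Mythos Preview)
The paper does not actually prove this proposition: it attributes the result to Rabenda's (unavailable) master's thesis and to Bardakov--Bellingeri \cite{BarBel1}, and gives no argument of its own. Your Reidemeister--Schreier computation using the subgroup $\iota(\SSS_n)$ as transversal is exactly the method used in \cite{BarBel1}, and your outline is correct.

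One expositional point worth tightening: after using only the relators $\tau_j\sigma_i\tau_j\sigma_i^{-1}$ with $|i-j|\ge 2$ you know that $\gamma_{w,i}$ depends only on $(w(i),w(i+1))$ \emph{for fixed $i$}, but the well-definedness of $\delta_{a,b}$ across different values of $i$ genuinely requires the mixed relators $\tau_i\tau_{i\pm1}\sigma_i\tau_{i\pm1}\tau_i\sigma_{i\pm1}^{-1}$, which rewrite not to empty words but to identities of the form $\gamma_{ws_is_j,i}=\gamma_{w,j}$. So rather than saying these relators ``have already been absorbed'' into the identification, it is cleaner to say that they are precisely what \emph{establishes} the identification across different $i$ (after which they become tautologies and contribute no further relations). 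You flag this in your final paragraph, so the logic is sound; it just reads slightly circularly as written.
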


Let $\SS$ be a finite set.  
A \emph{Coxeter matrix} over $\SS$ is a square matrix $M=(m_{s,t})_{s,t \in \SS}$ indexed by the elements of $\SS$ such that $m_{s,s}=1$ for all $s \in \SS$ and $m_{s,t} = m_{t,s} \in \{2,3,4, \dots\} \cup \{ \infty\}$ for all $s,t \in \SS$, $s \neq t$.
For $s,t \in \SS$, $s \neq t$, and $m \ge 2$, we denote by $\Pi(s,t,m)$ the word $sts \cdots$ of length $m$.
In other words, we have $\Pi (s,t,m) = (st)^{\frac{m}{2}}$ if $m$ is even and $\Pi(s,t,m) = (st)^{\frac{m-1}{2}}s$ if $m$ is odd.
The \emph{Artin group} associated with the Coxeter matrix  $M = (m_{s,t})_{s,t \in \SS}$ is the group $A=A_M$ defined by the presentation
\[
A = \langle \SS \mid \Pi (s,t, m_{s,t}) = \Pi (t,s,m_{s,t}) \text{ for } s,t \in \SS,\ s \neq t,\ m_{s,t} \neq \infty \rangle\,.
\]

\begin{expl}
Let $\SS = \{ \delta_{i,j} \mid 1 \le i \neq j \le n\}$.
Define a Coxeter matrix $M=(m_{s,t})_{s,t \in \SS}$ as follows.
Set  $m_{s,s}=1$ for all $s \in \SS$.
Let $s,t \in \SS$, $s \neq t$.
If $s=\delta_{i,j}$ and $t=\delta_{k,\ell}$, where $i,j,k,\ell$ are pairwise distinct, then set $m_{s,t}=2$.
If $s=\delta_{i,j}$ and $t=\delta_{j,k}$, where $i,j,k$ are pairwise distinct, then set $m_{s,t}=m_{t,s}=3$.
Set $m_{s,t} = \infty$ for all other cases.
Then, by Proposition \ref{prop3_1}, $\KB_n$ is the Artin group associated with $M = (m_{s,t})_{s,t \in \SS}$.
\end{expl}

If $\XX$ is a subset of $\SS$, then we set $M[\XX] = (m_{s,t})_{s,t \in \XX}$ and we denote by $A[\XX]$ the subgroup of $A$ generated by $\XX$.

\begin{thm}[Van der Lek \cite{Lek1}]\label{thm3_2}
Let  $A$ be the Artin group associated with a Coxeter matrix $M=(m_{s,t})_{s,t \in \SS}$, and let $\XX$ be a subset of $\SS$.
Then $A[\XX]$ is the Artin group associated with $M[\XX]$.
Moreover, if $\XX$ and $\YY$ are two subsets of $\SS$, then $A[\XX] \cap A[\YY] = A [\XX \cap \YY]$.
\end{thm}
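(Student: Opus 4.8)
Since this is a theorem of Van der Lek \cite{Lek1}, in the paper one would simply invoke it; what follows is a sketch of how one would reprove it. Write $(W,\SS)$ for the Coxeter system with Coxeter matrix $M$ and $A = A_M$ for the associated Artin group. The obvious homomorphism $\phi_\XX \colon A_{M[\XX]} \to A_M$ carrying each generator in $\XX$ to the like-named generator of $A$ has image exactly $A[\XX]$ by definition, so the first assertion amounts to the injectivity of $\phi_\XX$. It is worth noting why this is not formal: one would like a retraction $A_M \to A_{M[\XX]}$ obtained by killing the generators outside $\XX$, but the defining relation $\Pi(s,t,m_{s,t}) = \Pi(t,s,m_{s,t})$ with $s \in \XX$ and $t \notin \XX$ is preserved only when $m_{s,t}$ is even, so no such retraction exists in general.

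The plan is to pass to the topological model of the Artin group. First I would set up the standard arrangement model (Brieskorn for finite $W$, Van der Lek in general): with $U \subseteq V$ the Tits cone of $W$ acting on its reflection representation $V$ and $\mathcal A$ the family of complexified reflecting hyperplanes, one forms $M_W = \{ z \in V_{\mathbb C} \mid \operatorname{Re}(z) \in \operatorname{int}(U) \} \setminus \bigcup_{H \in \mathcal A} H$ and $N_W = M_W / W$, and one has $\pi_1(N_W) \cong A_M$ with the standard generators realized by meridian loops. For $\XX \subseteq \SS$ the standard parabolic $W_\XX$ carries an analogous space $N_{W_\XX}$ with $\pi_1(N_{W_\XX}) \cong A_{M[\XX]}$, and the inclusion of sub-arrangements induces a map $N_{W_\XX} \to N_W$ that realizes $\phi_\XX$ on $\pi_1$. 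The crux is to prove that this map is $\pi_1$-injective; I would handle this as Van der Lek does, by a covering-space and deformation-retraction argument relating $N_{W_\XX}$ to a suitable subspace of $N_W$ --- an argument that, notably, does not require the (still open) $K(\pi,1)$ property of Artin groups. Injectivity of $\phi_\XX$ is the first assertion, and at the same time identifies $A[\XX]$ with $A_{M[\XX]}$ compatibly with the generating sets.

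For the second assertion, $A[\XX \cap \YY] \subseteq A[\XX] \cap A[\YY]$ is immediate, and for the reverse inclusion one works again in $N_W$: the sub-arrangement indexed by $\XX \cap \YY$ is exactly the one common to those indexed by $\XX$ and by $\YY$, so $N_{W_{\XX \cap \YY}}$ sits inside $N_W$ as the \emph{intersection} of $N_{W_\XX}$ and $N_{W_\YY}$, and a van Kampen / covering-space bookkeeping, combined with the injectivity just established, yields $A[\XX] \cap A[\YY] \subseteq A[\XX \cap \YY]$ (alternatively, granting the first part, by analysing how cosets of standard parabolic subgroups of $A_M$ meet). The hard part is unquestionably the $\pi_1$-injectivity of $N_{W_\XX} \hookrightarrow N_W$: for spherical-type Artin groups one could instead argue combinatorially through Garside normal forms, but the Artin groups arising here --- such as $\KB_n$, which has $\infty$ among its coefficients --- are not of spherical type, so one is committed to the hyperplane-arrangement model and its delicate retraction argument, after which the identification of the fundamental groups with Artin groups and the intersection bookkeeping are comparatively routine.
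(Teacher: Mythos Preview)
You are right that the paper gives no proof of this theorem: it is simply quoted from Van der Lek's thesis \cite{Lek1} and used as a black box throughout Section~3. So your opening sentence already matches the paper's ``proof'' exactly.

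Your sketch of the first assertion is faithful to Van der Lek's actual argument: the hyperplane--arrangement model, the identification $\pi_1(N_W)\cong A_M$, and the $\pi_1$-injectivity of the map induced by the parabolic inclusion via a retraction in the Tits-cone picture. Your remark that no naive retraction $A_M\to A_{M[\XX]}$ exists when odd $m_{s,t}$ cross the boundary of $\XX$ is a good diagnostic of why the result is nontrivial.

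The one soft spot is your treatment of the intersection statement. The spaces $N_{W_\XX}$, $N_{W_\YY}$, $N_{W_{\XX\cap\YY}}$ are built from \emph{different} Coxeter groups acting on \emph{different} Tits cones; they are not literally subspaces of $N_W$ whose set-theoretic intersection one can take. What Van der Lek actually does is identify $M_{W_\XX}$ with the \emph{star} of the face $C_\XX$ of the fundamental chamber inside $M_W$, and then analyse how these stars (and their $W$-translates) overlap; the intersection property for the $A[\XX]$'s comes out of a careful groupoid/coset argument rather than from a direct ``intersection of subspaces equals subspace of the intersection'' step. Your parenthetical alternative---deduce it from the first part by analysing how cosets of standard parabolics meet---is in fact closer to the real proof, and would be the line to develop if you wanted to flesh this out.
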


In our example, for $\XX \subset \SS$ we denote by $\KB_n[\XX]$ the subgroup of $\KB_n$ generated by $\XX$. 
So, by Theorem \ref{thm3_2}, $\KB_n[\XX]$ is still an Artin group and it has a presentation with generating set $\XX$ and only two types of relations:
\begin{itemize}
\item
$st=ts$ if $m_{s,t}=2$ in $M[\XX]$,
\item
$sts=tst$ if $m_{s,t}=3$ in $M[\XX]$.
\end{itemize}

The following result can be easily proved from Theorem \ref{thm3_2} using presentations, but it is important to highlight it since it will be often used throughout the paper.

\begin{lem}\label{lem3_3}
Let $A$ be an Artin group associated with a Coxeter matrix $M=(m_{s,t})_{s,t \in \SS}$.
Let $\XX$ and $\YY$ be two subsets of $\SS$ such that $\XX \cup \YY = \SS$ and $m_{s,t} = \infty$ for all $s \in \XX \setminus (\XX \cap \YY)$ and $t \in \YY \setminus (\XX \cap \YY)$. 
Then $A = A[\XX]*_{A[\XX \cap \YY]} A[\YY]$.
\end{lem}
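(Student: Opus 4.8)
The plan is to derive the lemma from van der Lek's theorem (Theorem~\ref{thm3_2}) by a straightforward comparison of presentations; there is no deep point here, essentially just bookkeeping. Write $\WW = \XX \cap \YY$. By Theorem~\ref{thm3_2}, the parabolic subgroups $A[\XX]$, $A[\YY]$ and $A[\WW]$ are themselves the Artin groups of the restricted Coxeter matrices $M[\XX]$, $M[\YY]$ and $M[\WW]$; concretely, $A[\XX] = \langle \XX \mid R_\XX \rangle$ where $R_\XX$ is the set of relations $\Pi(s,t,m_{s,t}) = \Pi(t,s,m_{s,t})$ with $s,t \in \XX$ and $m_{s,t} \neq \infty$, and similarly for $\YY$ and $\WW$. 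Moreover, again by Theorem~\ref{thm3_2}, the natural maps $A[\WW] \hookrightarrow A[\XX]$ and $A[\WW] \hookrightarrow A[\YY]$ are injective.

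First I would write down a presentation of the amalgamated product $G := A[\XX] *_{A[\WW]} A[\YY]$ formed along these two injections. Since each of the amalgamating maps sends a generator $w \in \WW$ to the same generator $w$, viewed once in $A[\XX]$ and once in $A[\YY]$, the standard pushout presentation simplifies: on the generating set $\XX \cup \YY = \SS$ one takes the relations $R_\XX \cup R_\YY$, and the amalgamation relations become vacuous (they merely identify the two copies of each $w \in \WW$, which is already reflected in the fact that $\XX$ and $\YY$ overlap in $\WW$). Thus $G = \langle \SS \mid R_\XX \cup R_\YY \rangle$.

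The heart of the argument is then to show that $R_\XX \cup R_\YY$ is exactly the set of defining relations of $A$. Put $\XX' = \XX \setminus \WW$ and $\YY' = \YY \setminus \WW$, so that $\SS = \XX' \sqcup \WW \sqcup \YY'$, $\XX = \XX' \sqcup \WW$ and $\YY = \WW \sqcup \YY'$. If a two-element subset $\{s,t\}$ of $\SS$ is contained neither in $\XX$ nor in $\YY$, then, the complement of $\XX$ in $\SS$ being $\YY'$ and that of $\YY$ being $\XX'$, the set $\{s,t\}$ meets both $\XX'$ and $\YY'$; as $\XX' \cap \YY' = \emptyset$, one of $s,t$ lies in $\XX'$ and the other in $\YY'$, so $m_{s,t} = \infty$ by hypothesis. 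Consequently every pair $\{s,t\}$ with $m_{s,t} \neq \infty$ --- that is, every pair contributing a relation to $A$ --- is contained in $\XX$ or in $\YY$, so that relation lies in $R_\XX$ or in $R_\YY$; the reverse inclusion $R_\XX \cup R_\YY \subseteq \{\text{relations of } A\}$ is immediate. Hence $A$ and $G$ have identical presentations on the generating set $\SS$, so the identity on $\SS$ induces an isomorphism $G \to A$ carrying $A[\XX]$ onto $A[\XX]$ and $A[\YY]$ onto $A[\YY]$; this is precisely the assertion $A = A[\XX] *_{A[\XX \cap \YY]} A[\YY]$.

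The only step I would treat with some care is the claim in the second paragraph that $G$ genuinely is an amalgamated free product --- in particular that $A[\XX]$ and $A[\YY]$ embed in it and meet exactly in $A[\WW]$ --- which is where the injectivity of the parabolic inclusions furnished by Theorem~\ref{thm3_2} is really used; the rest is formal.
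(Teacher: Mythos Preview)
Your proposal is correct and follows exactly the approach the paper indicates: the paper does not actually write out a proof of Lemma~\ref{lem3_3}, it merely remarks that it ``can be easily proved from Theorem~\ref{thm3_2} using presentations,'' which is precisely what you do. Your careful bookkeeping of the relations and your closing remark on where the injectivity from Theorem~\ref{thm3_2} is genuinely used are both appropriate.
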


Let $G$ be a group and let $H$ be a subgroup of $G$.
A \emph{transversal} of $H$ in $G$ is a subset $T$ of $G$ such that for each $\alpha \in G$ there exists a unique $\theta \in T$ such that $\alpha H = \theta H$. 
For convenience we will always suppose that a transversal $T$ contains $1$ and we set $T^* = T \setminus \{1\}$.
The following is classical in the theory and is proved in Serre \cite[Section 1.1, Theorem 1]{Serre1}.

\begin{thm}[Serre \cite{Serre1}]\label{thm3_4}
Let $G_1, \dots, G_p, H$ be a collection of groups.
We suppose that $H$ is a subgroup of $G_j$ for all $j \in \{ 1, \dots, p \}$ and we consider the amalgamated product $G = G_1*_H G_2 *_H \cdots *_H G_p$.
For each $j \in \{1, \dots, p\}$ we choose a transversal $T_j$ of $H$ in $G_j$.
Then each element $\alpha \in G$ can be written in a unique way in the form $\alpha = \theta_1 \theta_2 \cdots \theta_\ell \beta$ such that:
\begin{itemize}
\item[(a)]
$\beta \in H$ and, for each $i \in \{1, \dots, \ell\}$, there exists $j = j(i) \in \{1, \dots, p \}$ such that $\theta_i \in T_j^* = T_j \setminus \{ 1 \}$,
\item[(b)]
$j(i) \neq j(i+1)$ for all  $i \in \{1, \dots, \ell-1\}$.
\end{itemize}
In particular, we have $\alpha \in H$ if and only if  $\ell = 0$ and $\alpha = \beta$.
\end{thm}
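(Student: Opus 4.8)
The plan is to reduce everything to the universal property of the amalgam $G = G_1 *_H G_2 *_H \cdots *_H G_p$: any family of homomorphisms $f_j \colon G_j \to K$ that agree on $H$ extends uniquely to a homomorphism $f \colon G \to K$. Granting this, existence of the asserted normal form is elementary. First I would write an arbitrary $\alpha \in G$ as a product of letters coming from the factors $G_j$. Using the coset decomposition $G_j = \bigsqcup_{\theta \in T_j} \theta H$, each such letter becomes a product $\theta \beta$ with $\theta \in T_j$ and $\beta \in H$, and then every $H$-part can be swept to the right in a single left-to-right pass: whenever an element of $H$ precedes a letter $g \in G_{j'}$, it is absorbed into $g$ and $g$ is re-split according to $T_{j'}$ (a resulting trivial coset representative being simply dropped). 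This produces an expression $\alpha = \theta_1 \cdots \theta_\ell \beta$ with $\theta_i \in T_{j(i)}^*$ and $\beta \in H$, but possibly with $j(i) = j(i+1)$ for some $i$. In that case $\theta_i \theta_{i+1}$ lies in $G_{j(i)}$ and rewrites as $\theta' \beta'$ with $\theta' \in T_{j(i)}$, $\beta' \in H$; after pushing $\beta'$ to the right as before, either $\theta' = 1$ and $\ell$ decreases, or $\theta' \neq 1$ and $\#\{i : j(i) = j(i+1)\}$ decreases. An induction on $\ell$ and then on this quantity yields a reduced expression.

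For uniqueness I would run the classical van der Waerden permutation-representation argument. Let $\Omega$ be the set of all reduced sequences $\mathbf{s} = (\theta_1, \dots, \theta_\ell; \beta)$ satisfying conditions (a) and (b), including the empty one $\mathbf{s}_0 = (\,;1)$. For each $j$ I would define $\lambda_j \colon G_j \to \mathrm{Sym}(\Omega)$ by mimicking left multiplication: given $g \in G_j$ and $\mathbf{s} \in \Omega$, write $g\theta_1 = \theta_1'' \gamma$ with $\theta_1'' \in T_j$, $\gamma \in H$ when $\ell \geq 1$ and $j(1) = j$, and $g = \theta_0 \gamma$ with $\theta_0 \in T_j$, $\gamma \in H$ otherwise; then cascade $\gamma$ rightward through the remaining letters, using that $\gamma \theta_i \in G_{j(i)}$ rewrites as $\theta_i' \gamma_i$ with $\theta_i' \in T_{j(i)}$, and that $\theta_i' \neq 1$ because $\theta_i \notin H$ forces $\gamma\theta_i \notin H$, so that no letter is ever absorbed and the output is again a reduced sequence. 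Routine coset bookkeeping shows that each $\lambda_j$ is a homomorphism with $\lambda_j(1) = \id$ (hence lands in the bijections of $\Omega$), and that $\lambda_j|_H$ does not depend on $j$, being simply the cascade of an element of $H$ through all of $\mathbf{s}$. By the universal property the $\lambda_j$ assemble into an action $\lambda \colon G \to \mathrm{Sym}(\Omega)$, and an induction on $\ell$ gives $\lambda(\theta_1 \cdots \theta_\ell \beta)(\mathbf{s}_0) = \mathbf{s}$ for every reduced sequence. Consequently two reduced expressions representing the same element of $G$ send $\mathbf{s}_0$ to the same sequence and therefore coincide, and since $\ell = 0$ is the only reduced form lying in $H$ the last assertion of the theorem follows at once.

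The only step with genuine content is the verification that the maps $\lambda_j$ are well defined, are homomorphisms, and agree on $H$ — in particular one must check that the rightward cascade terminates correctly and never produces an unwanted cancellation; everything else is formal manipulation. As an alternative one could instead deduce the statement from the action of $G$ on its Bass--Serre tree, the geodesic paths issuing from the base vertex corresponding bijectively to the reduced sequences, but the permutation-representation argument sketched above has the advantage of being entirely self-contained.
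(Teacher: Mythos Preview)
The paper does not supply its own proof of this theorem; it simply cites Serre \cite[Section~1.1, Theorem~1]{Serre1} and treats the statement as classical. Your sketch is the standard van der Waerden permutation-representation argument, which is essentially how Serre proves the normal form in that reference, so your approach is both correct and aligned with the cited source; the Bass--Serre tree alternative you mention is likewise in Serre's book and would work just as well.
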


The expression of $\alpha$ given in the above theorem is called the \emph{normal form} of $\alpha$.
It depends on the amalgamated product and on the choice of the transversal of $H$ in $G_j$ for all $j$.

Consider the notation introduced in Theorem \ref{thm3_4}.
So, $G = G_1 *_H \cdots *_H G_p$ and $T_j$ is a transversal of $H$ in $G_j$ for all $j \in \{1, \dots, p\}$.
Let $\alpha \in G$.
We suppose that $\alpha$ is written in the form $\alpha = \alpha_1 \cdots \alpha_\ell$ such that $\ell \ge 1$, 
\begin{itemize}
\item[(a)]
for each $i \in \{1, \dots, \ell\}$ there exists $j=j(i) \in \{1, \dots, p\}$ such that $\alpha_i \in G_j \setminus H$,
\item[(b)]
$j(i) \neq j(i+1)$ for all  $i \in \{1, \dots, \ell-1\}$.
\end{itemize}
We define $\beta_i \in H$ and $\theta_i \in T_{j(i)}^*$ for $i \in \{1, \dots, \ell\}$ by induction on $i$ as follows.
First, $\theta_1$ is the element of $T_{j(1)}^*$ such that $\alpha_1 H = \theta_1 H$ and $\beta_1$ is the element of $H$ such that $\alpha_1 = \theta_1 \beta_1$.
We suppose that $i \ge 2$ and that $\beta_{i-1} \in H$ is defined.
Then $\theta_i$ is the element of $T_{j(i)}^*$ such that $\beta_{i-1} \alpha_i H = \theta_i H$ and $\beta_i$ is the element of $H$ such that $\beta_{i-1} \alpha_i = \theta_i \beta_i$.
The following result is quite obvious and its proof is left to the reader.

\begin{prop}\label{prop3_5}
Under the above notations and hypothesis $\alpha = \theta_1 \cdots \theta_\ell \beta_\ell$ is the normal form of  $\alpha$.
In particular, $\alpha \not\in H$ (since $\ell \ge 1$).
\end{prop}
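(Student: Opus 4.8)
The plan is to derive the statement from the uniqueness part of Theorem~\ref{thm3_4}. First I would prove, by induction on $i \in \{0,1,\dots,\ell\}$ and with the convention $\beta_0 = 1$ (so that $\theta_1,\beta_1$ satisfy $\beta_0 \alpha_1 = \theta_1 \beta_1$, which is consistent with the way they were defined), the identity
\[
\alpha = \theta_1 \cdots \theta_i\, \beta_i\, \alpha_{i+1} \cdots \alpha_\ell\,,
\]
where for $i = 0$ the right-hand side is read as $\alpha_1 \cdots \alpha_\ell$. The base case $i = 0$ is the hypothesis on $\alpha$. For the inductive step, starting from the identity for $i-1$ one substitutes $\beta_{i-1}\alpha_i = \theta_i \beta_i$ into $\theta_1 \cdots \theta_{i-1}\, \beta_{i-1}\, \alpha_i \alpha_{i+1} \cdots \alpha_\ell$. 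Taking $i = \ell$ yields $\alpha = \theta_1 \cdots \theta_\ell\, \beta_\ell$.

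Next I would check that this expression satisfies conditions (a) and (b) of Theorem~\ref{thm3_4}, relative to the chosen transversals $T_1, \dots, T_p$. Condition (b), that $j(i) \neq j(i+1)$ for $1 \le i \le \ell - 1$, is part of the hypotheses on the factorization $\alpha = \alpha_1 \cdots \alpha_\ell$, and by construction the index attached to $\theta_i$ is exactly $j(i)$. For condition (a), that $\beta_\ell \in H$ is clear, so it remains to see that each $\theta_i$ lies in $T_{j(i)}^*$, i.e.\ that $\theta_i \neq 1$. If $\theta_1 = 1$, then $\alpha_1 = \theta_1 \beta_1 = \beta_1 \in H$, contradicting $\alpha_1 \in G_{j(1)} \setminus H$. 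If $\theta_i = 1$ for some $i \ge 2$, then $\beta_{i-1}\alpha_i = \theta_i \beta_i = \beta_i \in H$; since $\beta_{i-1} \in H$ this gives $\alpha_i \in H$, again a contradiction. Hence $\theta_i \in T_{j(i)}^*$ for all $i$.

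Finally, since $\alpha = \theta_1 \cdots \theta_\ell\, \beta_\ell$ is an expression of $\alpha$ satisfying (a) and (b), the uniqueness part of Theorem~\ref{thm3_4} identifies it with the normal form of $\alpha$; and since $\ell \ge 1$, the last assertion of that theorem gives $\alpha \notin H$. I do not expect a genuine obstacle here: the statement is essentially bookkeeping on top of Theorem~\ref{thm3_4}, and the only point that requires a word of justification is the non-triviality of the $\theta_i$, which is precisely where the hypothesis $\alpha_i \notin H$ is used.
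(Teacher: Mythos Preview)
Your proof is correct and complete. The paper itself declares the result ``quite obvious'' and leaves the proof to the reader, so there is no argument to compare against; your inductive verification that $\alpha = \theta_1 \cdots \theta_i\, \beta_i\, \alpha_{i+1} \cdots \alpha_\ell$ together with the check that each $\theta_i \neq 1$ (using $\alpha_i \notin H$) is exactly the routine bookkeeping the authors had in mind.
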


We turn now to the proofs of our technical lemmas.

\begin{lem}\label{lem3_6}
Let $G_1, G_2, H$ be three groups.
We suppose that $H$ is a common subgroup of $G_1$ and $G_2$ and we set $G = G_1 *_H G_2$.
Let $\tau : G \to G$ be an automorphism of order $2$ such that $\tau (G_1) = G_2$ and $\tau(G_2) = G_1$.
Let  $G^\tau = \{ \alpha \in G \mid \tau(\alpha) = \alpha \}$.
Then $G^\tau \subset H$.
\end{lem}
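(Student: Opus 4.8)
The plan is to argue by contradiction using the normal form theory for amalgamated products (Theorem~\ref{thm3_4} and Proposition~\ref{prop3_5}). I would begin with the elementary but essential observation that $\tau(H)=H$: since $H=G_1\cap G_2$ inside $G=G_1*_H G_2$ and $\tau$ interchanges $G_1$ and $G_2$, we get $\tau(H)\subset G_1\cap G_2=H$, and equality holds because $\tau$ has order $2$. Consequently $\tau$ restricts to bijections $G_1\setminus H\to G_2\setminus H$ and $G_2\setminus H\to G_1\setminus H$.

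Next, fix transversals $T_1$ of $H$ in $G_1$ and $T_2$ of $H$ in $G_2$, each containing $1$. Let $\alpha\in G^\tau$ and suppose, for contradiction, that $\alpha\notin H$. By Theorem~\ref{thm3_4}, $\alpha$ has a normal form $\alpha=\theta_1\cdots\theta_\ell\,\beta$ with $\ell\ge 1$, $\beta\in H$, $\theta_i\in T_{j(i)}^*$ and $j(i)\neq j(i+1)$ for all $i$. Writing $\bar 1=2$ and $\bar 2=1$, applying $\tau$ and using the first paragraph gives $\tau(\alpha)=\tau(\theta_1)\cdots\tau(\theta_\ell)\,\tau(\beta)$ where $\tau(\theta_i)\in G_{\bar j(i)}\setminus H$ and $\tau(\beta)\in H$. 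Absorbing $\tau(\beta)$ into the last syllable, the product $\tau(\theta_\ell)\tau(\beta)$ lies in $G_{\bar j(\ell)}\setminus H$ (it cannot lie in $H$, otherwise $\tau(\theta_\ell)\in H$), so $\tau(\alpha)=\tau(\theta_1)\cdots\tau(\theta_{\ell-1})\,(\tau(\theta_\ell)\tau(\beta))$ is a product of $\ell$ syllables of successive types $\bar j(1),\dots,\bar j(\ell)$ with consecutive types distinct.

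Now Proposition~\ref{prop3_5} applies to this expression for $\tau(\alpha)$: the normal form of $\tau(\alpha)$ has exactly $\ell$ transversal factors, the $i$-th lying in $T_{\bar j(i)}^*$. On the other hand $\tau(\alpha)=\alpha$, whose normal form has its $i$-th transversal factor in $T_{j(i)}^*$. By the uniqueness statement in Theorem~\ref{thm3_4}, we must have $j(i)=\bar j(i)$ for all $i$, and in particular $j(1)=\bar j(1)$, which is impossible. Hence $\alpha\in H$, proving $G^\tau\subset H$. The only point that needs a little care — the nearest thing to an obstacle — is verifying that the rewritten word for $\tau(\alpha)$ is genuinely of the form demanded by Proposition~\ref{prop3_5}, i.e.\ that every syllable, including the one that has swallowed $\tau(\beta)$, remains outside $H$; the rest is formal bookkeeping with normal forms.
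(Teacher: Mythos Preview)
Your proof is correct and follows essentially the same approach as the paper's: both compare the syllable types in the normal form of $\alpha$ with those of $\tau(\alpha)$ and derive a contradiction from $j(1)\neq \bar j(1)$. The paper streamlines the argument slightly by choosing $T_2=\tau(T_1)$, so that $\tau$ carries the normal form of $\alpha$ directly to the normal form of $\tau(\alpha)$ and Proposition~\ref{prop3_5} is not needed.
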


\begin{proof}
Since $\tau (G_1) = G_2$, $\tau (G_2) = G_1$ and $H = G_1 \cap G_2$, we have $\tau(H) = H$.
Let $T_1$ be a transversal of $H$ in $G_1$.
Set $T_2 = \tau (T_1)$.
Then $T_2$ is a transversal of $H$ in $G_2$. 
Let $\alpha \in G$ and let $\alpha = \theta_1 \theta_2 \cdots \theta_\ell \beta$ be the normal form of $\alpha$.
Notice that the normal form of $\tau(\alpha)$ is $\tau(\theta_1)\, \tau (\theta_2) \cdots \tau(\theta_\ell)\, \tau (\beta)$.
Suppose that $\ell \ge 1$.
Without loss of generality we can assume that $\theta_1 \in T_1^*$.
Then $\tau(\theta_1) \in T_2^*$, hence $\theta_1 \neq \tau (\theta_1)$, thus $\alpha \neq \tau (\alpha)$, and therefore $\alpha \not\in G^\tau$.
So, if $\alpha \in G^\tau$, then $\ell =0$ and $\alpha = \beta \in H$.
\end{proof}

Recall from Proposition \ref{prop3_1} that $\VB_n = \KB_n \rtimes \SSS_n$ and that the action of $\SSS_n$ on $\KB_n$ is defined by $w(\delta_{i,j}) = \delta_{w(i), w(j)}$ for all $w \in \SSS_n$ and all $\delta_{i,j} \in \SS$.
Recall also that for each generator $s_i$ of $\SSS_n$ and each $\alpha \in \KB_n$ we have $s_i(\alpha) =\tau_i \alpha \tau_i$. 
Throughout the paper we will use both interpretations, as action of $s_i$ and as conjugation by $\tau_i$.
For each $1 \le k \le n$ we set $\SS_k = \{ \delta_{i,j} \mid k \le i \neq j \le n \}$.

\begin{lem}\label{lem3_7}
Let $\XX$ be a subset of $\SS$ invariant under the action of $s_1$.
Then $\KB_n [\XX]^{s_1} = \KB_n [\XX \cap \SS_3]$.
\end{lem}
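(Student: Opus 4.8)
The statement concerns $\KB_n[\XX]^{s_1}$, the subgroup of $s_1$-fixed elements inside $\KB_n[\XX]$, where $\XX \subset \SS$ is invariant under $s_1$ and $\SS_3 = \{\delta_{i,j} \mid 3 \le i \neq j \le n\}$. The action of $s_1$ on $\SS$ permutes indices by the transposition $(1,2)$, so $s_1$ fixes $\delta_{i,j}$ exactly when $\{i,j\} \cap \{1,2\} = \emptyset$, i.e.\ when $\delta_{i,j} \in \SS_3$. The inclusion $\KB_n[\XX \cap \SS_3] \subseteq \KB_n[\XX]^{s_1}$ is therefore immediate, since $\XX \cap \SS_3$ consists of $s_1$-fixed generators. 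The whole content is the reverse inclusion.

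**The plan.** The plan is to realize $\KB_n[\XX]$ as an amalgamated product to which Lemma \ref{lem3_6} applies, with $\tau$ playing the role of the involution $s_1$. First I would partition $\XX$ according to the indices $1$ and $2$. Let me write $\XX_1 = \{\delta_{i,j} \in \XX \mid 2 \notin \{i,j\}\}$ and $\XX_2 = \{\delta_{i,j} \in \XX \mid 1 \notin \{i,j\}\}$; then $\XX_1 \cup \XX_2 = \XX$ and $\XX_1 \cap \XX_2 = \XX \cap \SS_3$. Crucially, by the description of the Coxeter matrix $M$ in the Example, a generator $\delta_{i,j}$ involving the index $1$ but not $2$ and a generator $\delta_{k,\ell}$ involving the index $2$ but not $1$ share at most one common index, and checking the three cases (disjoint index sets give $m=2$; sharing one index in the ``wrong'' pattern $\delta_{i,j},\delta_{k,i}$ with $j \neq \ell$ etc.\ gives $m=\infty$) one sees that in fact $m_{s,t} = \infty$ whenever $s \in \XX_1 \setminus (\XX_1 \cap \XX_2)$ and $t \in \XX_2 \setminus (\XX_1 \cap \XX_2)$. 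Wait — I should double-check the case where they share an index: e.g.\ $\delta_{1,3}$ and $\delta_{2,3}$, or $\delta_{1,3}$ and $\delta_{3,2}$. The pair $\delta_{1,3},\delta_{3,2}$ is of the form $\delta_{i,j},\delta_{j,k}$ and has $m = 3$, which would break the hypothesis of Lemma \ref{lem3_3}. So this naive splitting does not quite work, and one must be more careful about which generators get grouped with which — this is the main obstacle (see below).

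**Key steps, assuming the splitting is arranged correctly.** Once $\XX$ is written as $\XX_1 \cup \XX_2$ with $\XX_1 \cap \XX_2 = \XX \cap \SS_3$, with $s_1(\XX_1) = \XX_2$ and $s_1(\XX_2) = \XX_1$, and with $m_{s,t} = \infty$ across the two ``private'' parts, Lemma \ref{lem3_3} gives
\[
\KB_n[\XX] = \KB_n[\XX_1] *_{\KB_n[\XX_1 \cap \XX_2]} \KB_n[\XX_2]\,,
\]
using Van der Lek's theorem (Theorem \ref{thm3_2}) to identify the vertex and edge groups as the Artin subgroups on the respective generating sets and, in particular, to guarantee $\KB_n[\XX_1] \cap \KB_n[\XX_2] = \KB_n[\XX_1 \cap \XX_2]$. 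The automorphism $s_1$ of $\VB_n$ restricts to an automorphism of $\KB_n[\XX]$ (since $\XX$ is $s_1$-invariant) of order $2$ that swaps $\KB_n[\XX_1]$ and $\KB_n[\XX_2]$. Lemma \ref{lem3_6} then yields $\KB_n[\XX]^{s_1} \subseteq \KB_n[\XX_1 \cap \XX_2] = \KB_n[\XX \cap \SS_3]$, which is exactly the reverse inclusion needed. Combining with the trivial inclusion finishes the proof.

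**The main obstacle.** The real work is the correct bookkeeping of generators so that Lemma \ref{lem3_3} genuinely applies — i.e.\ arranging a decomposition $\XX = \XX_1 \cup \XX_2$ that is $s_1$-equivariant (swapped by $s_1$), has intersection exactly $\XX \cap \SS_3$, and has no finite-label edges between the private parts. The offending generators are those like $\delta_{1,j}, \delta_{2,j}, \delta_{j,1}, \delta_{j,2}$ for $j \ge 3$: the $s_1$-action pairs $\delta_{1,j} \leftrightarrow \delta_{2,j}$ and $\delta_{j,1} \leftrightarrow \delta_{j,2}$, and e.g.\ $\delta_{1,j}$ and $\delta_{j,2}$ carry an $m = 3$ relation. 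The fix is to route \emph{both} members of each such $s_1$-orbit into the \emph{same} side; concretely one can let $\XX_1$ contain all of $\XX \cap \SS_3$ together with every generator in $\XX$ whose orbit is $s_1$-invariant as a pair and whose ``base index'' (the one in $\{3,\dots,n\}$) is, say, odd — no, cleaner: split by the index pattern so that whenever $\delta_{a,b}$ with $\{a,b\} \cap \{1,2\} \neq \emptyset$ lands in $\XX_1$, its $s_1$-image also lands in $\XX_1$ only if $\XX_1 = \XX_2$, which is impossible. The honest resolution is that one cannot split with $s_1$ swapping the two sides unless the ``mixed'' generators are absent; so the correct statement to prove is that $\XX$ decomposes as $\XX = (\XX \cap \SS_3) \sqcup \mathcal{O}$ where $\mathcal{O}$ is a union of $s_1$-orbits each of size $2$, and then one sets $\XX_1 = (\XX \cap \SS_3) \cup \mathcal{O}$ and applies $s_1$... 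I would instead do the proof by a direct normal-form argument: take the amalgamated product decomposition of $\KB_n[\XX]$ over $\KB_n[\XX \cap \SS_3]$ into the maximal ``$s_1$-orbit-blocks'', verify $s_1$ permutes the vertex groups with no fixed vertex group outside the edge group, and run the argument of Lemma \ref{lem3_6} (uniqueness of normal form forces $\ell = 0$) in this slightly more general multi-factor setting, using Theorem \ref{thm3_4} and Proposition \ref{prop3_5}. That generalized version of Lemma \ref{lem3_6} — an involution of $G_1 *_H \cdots *_H G_p$ permuting the factors freely has fixed subgroup inside $H$ — is the technical heart, and its proof is the same transversal computation: pick $T$ on one representative per orbit and transport by $s_1$, so the first syllable $\theta_1$ of a fixed element would have to equal $s_1(\theta_1)$, which lies in a different factor's transversal, forcing $\ell = 0$.
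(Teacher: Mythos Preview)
Your identification of the obstacle is exactly right, but your proposed fix does not work. The multi-factor amalgamated product of $\KB_n[\XX]$ over $H=\KB_n[\XX\cap\SS_3]$ with one factor per $s_1$-orbit (or per element) of $\XX\setminus\SS_3$ simply does not exist: Lemma~\ref{lem3_3} requires $m_{s,t}=\infty$ for $s,t$ lying in the private parts of distinct factors, and this fails systematically. For instance, $\delta_{1,j}$ and $\delta_{j,2}$ sit in different $s_1$-orbits (namely $\{\delta_{1,j},\delta_{2,j}\}$ and $\{\delta_{j,1},\delta_{j,2}\}$) but have $m=3$; and $\delta_{1,j}$ and $\delta_{2,k}$ with $j\neq k$ sit in different orbits but have $m=2$. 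If instead you coarsen the blocks so that any two generators with a finite label lie in the same block, these blocks become $s_1$-invariant (since $s_1$ preserves the Coxeter matrix), so $s_1$ fixes each factor and your generalized Lemma~\ref{lem3_6} gives nothing. Your initial two-set splitting $\XX_1,\XX_2$ has the additional defect that $\delta_{1,2},\delta_{2,1}$ lie in neither set, so $\XX_1\cup\XX_2\neq\XX$.

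The paper's proof resolves this by peeling off the $s_1$-orbits \emph{one at a time}. The key observation you are missing is that inside each individual size-two orbit $\{\delta_{1,2},\delta_{2,1}\}$, $\{\delta_{1,k},\delta_{2,k}\}$, or $\{\delta_{k,1},\delta_{k,2}\}$ one always has $m=\infty$ between the two members. So at each stage one writes the current subset $\YY$ (still $s_1$-invariant and containing $\XX\cap\SS_3$) as $\YY'\cup\YY''$ where $\YY'=\YY\setminus\{s_1(a)\}$ and $\YY''=\YY\setminus\{a\}$ for one such orbit $\{a,s_1(a)\}$; then the private parts are singletons with $m=\infty$ between them, Lemma~\ref{lem3_3} applies, $s_1$ swaps the two factors, and Lemma~\ref{lem3_6} gives $\KB_n[\YY]^{s_1}\subset\KB_n[\YY'\cap\YY'']$. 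Iterating down through all the orbits (the paper does $\{\delta_{1,2},\delta_{2,1}\}$ first, then the $\{\delta_{1,k},\delta_{2,k}\}$, then the $\{\delta_{k,1},\delta_{k,2}\}$) lands you in $\KB_n[\XX\cap\SS_3]$. Your argument becomes correct once you replace the nonexistent single decomposition by this inductive descent.
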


\begin{proof}
Set $\UU  = \{ \delta_{i,j} \in \XX \mid (i,j) \not \in \{ (1,2), (2,1) \} \}$.
We first prove that $\KB_n [\XX]^{s_1} \subset \KB_n[\UU]$.
If $\XX \subset \UU$ there is nothing to prove.
We can therefore suppose that $\XX \not\subset \UU$.
Since $\XX$ is invariant under the action of $s_1$, we have $\XX = \UU \cup \{ \delta_{1,2}, \delta_{2,1} \}$. 
Set $\UU' = \UU \cup \{ \delta_{1,2} \}$ and $\UU'' = \UU \cup \{ \delta_{2,1} \}$.
By Lemma \ref{lem3_3}, $\KB_n [\XX] = \KB_n[\UU']*_{\KB_n[\UU]} \KB_n[\UU'']$.
Moreover, $s_1 (\KB_n[\UU']) = \KB_n [\UU'']$ and $s_1 (\KB_n [\UU'']) = \KB_n[\UU']$.
So, by Lemma \ref{lem3_6}, $\KB_n[\XX]^{s_1} \subset \KB_n [\UU]$.

For $2 \le k \le n$ we set $\VV_k = \{ \delta_{i,j} \in \XX \mid (i,j) \not\in (\{1,2\} \times \{1,2, \dots, k \}) \}$.
We show by induction on $k$ that $\KB_n [\XX]^{s_1} \subset \KB_n[\VV_k]$.
Since $\VV_2 = \UU$ the case $k=2$ holds.
Suppose that $k \ge 3$ and that the inductive hypothesis holds, that is, $\KB_n [\XX]^{s_1} \subset \KB_n [\VV_{k-1}]$.
If $\VV_k = \VV_{k-1}$ there is nothing to prove.
We can therefore suppose that $\VV_k \neq \VV_{k-1}$.
Since $\VV_{k-1}$ is invariant under the action of $s_1$ we have $\VV_{k-1} = \VV_k \cup \{\delta_{1,k}, \delta_{2,k}\}$.
Set $\VV_k' = \VV_k \cup \{ \delta_{1,k} \}$ and $\VV_k'' = \VV_k \cup \{ \delta_{2,k} \}$.
By Lemma \ref{lem3_3}, $\KB_n [\VV_{k-1}] = \KB_n [\VV_k'] *_{\KB_n [\VV_k]} \KB_n [\VV_k'']$.
Moreover, $s_1 (\KB_n [\VV_k']) = \KB_n [\VV_k'']$ and $s_1 (\KB_n [\VV_k'']) = \KB_n [\VV_k']$.
So, by Lemma \ref{lem3_6}, $\KB_n [\XX]^{s_1} \subset \KB_n [\VV_k]$.

For $2 \le k \le n$ we set $\WW_k = \{ \delta_{i,j} \in \XX \mid (i,j) \not \in ( \{ 1,2\} \times \{1,2, \dots, n\}) \text{ and } (i,j) \not\in (\{1, 2, \dots, k\} \times \{ 1,2\}) \}$.
We show by induction on $k$ that $\KB_n [\XX]^{s_1} \subset \KB_n[\WW_k]$.
Since $\WW_2 = \VV_n$ the case  $k=2$ holds.
Suppose that $k \ge 3$ and that the inductive hypothesis holds, that is, $\KB_n [\XX]^{s_1} \subset \KB_n [\WW_{k-1}]$.
If $\WW_k = \WW_{k-1}$ there is nothing to prove.
We can therefore suppose that $\WW_k \neq \WW_{k-1}$.
Since $\WW_{k-1}$ is invariant under the action of $s_1$ we have $\WW_{k-1} = \WW_k \cup \{ \delta_{k,1}, \delta_{k,2} \}$. 
Set $\WW_k' = \WW_k \cup \{ \delta_{k,1} \}$ and $\WW_k'' = \WW_k \cup \{ \delta_{k,2} \}$.
By Lemma \ref{lem3_3}, $\KB_n [\WW_{k-1}] = \KB_n [\WW_k'] *_{\KB_n [\WW_k]} \KB_n [\WW_k'']$.
Moreover, $s_1 (\KB_n [\WW_k']) = \KB_n [\WW_k'']$ and $s_1 (\KB_n [\WW_k'']) = \KB_n [\WW_k']$.
So, by Lemma \ref{lem3_6}, $\KB_n [\XX]^{s_1} \subset \KB_n [\WW_k]$.
The inclusion $\KB_n [\XX]^{s_1} \subset \KB_n [\XX \cap \SS_3]$ therefore follows from the fact that $\WW_n = \XX \cap \SS_3$.
The reverse inclusion $\KB_n[\XX \cap \SS_3] \subset \KB_n[\XX]^{s_1}$ is obvious.
\end{proof}

By applying the action of the symmetric group, from Lemma \ref{lem3_7} it immediately follows:

\begin{corl}\label{corl3_7b}
Let $k \in \{1, \dots, n-1\}$, let $\XX$ be a subset of $\SS$ invariant under the action of $s_k$, and let $\UU_k = \{ \delta_{i,j} \in \SS \mid i,j \not\in \{k, k+1\}\}$.
Then $\KB_n [\XX]^{s_k} = \KB_n [\XX \cap \UU_k]$.
\end{corl}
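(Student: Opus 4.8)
The corollary follows from Lemma \ref{lem3_7} by transporting the statement along the conjugation action of the symmetric group. The key point is that $\SSS_n$ acts on $\KB_n$ by permuting the indices of the generators $\delta_{i,j}$, so conjugation by an element $w \in \SSS_n$ carries the fixed subgroup of $s_1$ onto the fixed subgroup of $w s_1 w^{-1}$, and carries Artin subgroups generated by $\SSS_n$-translates of index sets onto one another.

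**Steps.** First I would fix $k \in \{1, \dots, n-1\}$ and choose $w \in \SSS_n$ with $w s_1 w^{-1} = s_k$; concretely one can take $w$ to be the permutation sending $1 \mapsto k$ and $2 \mapsto k+1$ (and, say, fixing the rest, or any such $w$). Recall from Proposition \ref{prop3_1} that for $\alpha \in \KB_n$ one has $w(\delta_{i,j}) = \delta_{w(i),w(j)}$, and that for the generator $s_k$ and $\alpha \in \KB_n$ the action is $s_k(\alpha) = \tau_k \alpha \tau_k$; in particular conjugation by $\iota(w)$ inside $\VB_n$ realizes the automorphism $w$ of $\KB_n$. Next, given a subset $\XX$ of $\SS$ invariant under $s_k$, I would set $\XX_0 = w^{-1}(\XX)$; since $w^{-1} s_k w = s_1$, the set $\XX_0$ is invariant under $s_1$. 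Then $w(\KB_n[\XX_0]) = \KB_n[\XX]$, and for any $\beta \in \KB_n[\XX_0]$ we have $s_k(w(\beta)) = w(s_1(\beta))$, so $\beta \in \KB_n[\XX_0]^{s_1}$ if and only if $w(\beta) \in \KB_n[\XX]^{s_k}$; that is, $w$ restricts to an isomorphism $\KB_n[\XX_0]^{s_1} \xrightarrow{\ \sim\ } \KB_n[\XX]^{s_k}$.

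**Conclusion.** Applying Lemma \ref{lem3_7} to $\XX_0$ gives $\KB_n[\XX_0]^{s_1} = \KB_n[\XX_0 \cap \SS_3]$, where $\SS_3 = \{\delta_{i,j} \mid 3 \le i \neq j \le n\} = \{\delta_{i,j} \in \SS \mid i,j \notin \{1,2\}\}$. Hitting both sides with $w$, we get $\KB_n[\XX]^{s_k} = \KB_n[w(\XX_0 \cap \SS_3)] = \KB_n[\XX \cap w(\SS_3)]$. It remains to identify $w(\SS_3)$: since $w$ maps $\{1,2\}$ onto $\{k,k+1\}$, it maps the complementary index set $\{3,\dots,n\}$ onto $\{1,\dots,n\} \setminus \{k,k+1\}$, so $w(\SS_3) = \{\delta_{i,j} \in \SS \mid i,j \notin \{k,k+1\}\} = \UU_k$. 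This yields $\KB_n[\XX]^{s_k} = \KB_n[\XX \cap \UU_k]$, as claimed.

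**Main obstacle.** There is no real obstacle here: the only thing to be careful about is the bookkeeping of how $w$ acts on index sets — in particular checking that $w^{-1}$ conjugates $s_k$ to $s_1$ and that $\XX$ being $s_k$-invariant is equivalent to $\XX_0 = w^{-1}(\XX)$ being $s_1$-invariant — together with the (routine but worth stating) compatibility $w \circ s_1 = s_k \circ w$ as automorphisms of $\KB_n$, which is what lets the fixed-subgroup statement be transported. Everything else is a direct application of Lemma \ref{lem3_7}.
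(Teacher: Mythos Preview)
Your proof is correct and follows exactly the approach the paper indicates (``By applying the action of the symmetric group, from Lemma \ref{lem3_7} it immediately follows''): you simply transport Lemma \ref{lem3_7} along conjugation by a $w \in \SSS_n$ with $w s_1 w^{-1} = s_k$, and the bookkeeping you do is precisely what the paper leaves implicit. One tiny slip: a permutation sending $1 \mapsto k$, $2 \mapsto k+1$ and ``fixing the rest'' is not literally a permutation when $k \ge 3$, but any $w$ with $w(1)=k$, $w(2)=k+1$ works and the argument is unaffected.
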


\begin{lem}\label{lem3_8}
Let $G_1, G_2, H$ be three groups.
We suppose that $H$ is a common subgroup of $G_1$ and $G_2$ and we set $G = G_1*_{H} G_2$.
Let $\tau : G \to G$ be an automorphism of order $2$ such that $\tau (G_1) = G_2$ and $\tau (G_2) = G_1$. 
Let  $\alpha \in G$ such that $\tau (\alpha) = \alpha^{-1}$.
Then there exist $\alpha' \in G$ and $\beta' \in H$ such that $\tau (\beta') = \beta'^{-1}$ and $\alpha = \alpha' \beta'\, \tau(\alpha'^{-1})$.
\end{lem}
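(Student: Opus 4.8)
I would first recast the statement. Put $X=\{\alpha\in G\mid\tau(\alpha)=\alpha^{-1}\}$ and let $G$ act on $X$ by twisted conjugation, $g\bullet\alpha=g\,\alpha\,\tau(g)^{-1}$; since $\tau$ has order two this is a well-defined left action of $G$ on $X$ (both verifications are immediate). Then the lemma is equivalent to the assertion that \emph{every $G$-orbit in $X$ meets $H$}: if $g\bullet\alpha\in H$ then $\beta':=g\bullet\alpha$ lies in $H\cap X$, and putting $\alpha':=g^{-1}$ gives $\alpha=\alpha'\,\beta'\,\tau(\alpha'^{-1})$ with $\tau(\beta')=\beta'^{-1}$; conversely any such decomposition exhibits $\beta'=(\alpha'^{-1})\bullet\alpha$ in the orbit of $\alpha$. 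I would then prove this reformulation by induction on $\ell(\alpha)$, the number of syllables in the normal form of $\alpha$ in the sense of Theorem~\ref{thm3_4}. Recall that $\ell(\alpha)=0$ exactly when $\alpha\in H$, and that by Proposition~\ref{prop3_5} this number equals the length of any expression $\alpha=a_1\cdots a_\ell$ with $a_i\in G_{j(i)}\setminus H$ and $j(i)\neq j(i+1)$. Note also that $\tau(H)=H$, since $H=G_1\cap G_2$ and $\tau$ exchanges $G_1$ and $G_2$.

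The case $\ell(\alpha)=0$ is trivial, and $\ell(\alpha)=1$ cannot occur: if $\alpha\in G_j\setminus H$ then $\alpha^{-1}\in G_j$ whereas $\tau(\alpha)\in G_{3-j}$, so $\tau(\alpha)=\alpha^{-1}$ would force $\alpha^{-1}\in G_1\cap G_2=H$. Assume then $\ell(\alpha)=\ell\geq2$, fix transversals of $H$ in $G_1$ and in $G_2$ and a reduced expression $\alpha=a_1\cdots a_\ell$ as above, and set $\alpha''=a_2\cdots a_{\ell-1}$ (the empty word when $\ell=2$). Because $\tau$ has order two, exchanges the two factors, and fixes $H$ setwise, both $\tau(a_1)\cdots\tau(a_\ell)$ and $a_\ell^{-1}\cdots a_1^{-1}$ are again expressions of the above form --- the first for $\tau(\alpha)$, the second for $\alpha^{-1}$ --- so by Proposition~\ref{prop3_5} the first syllable of the normal form of $\tau(\alpha)$ is the transversal representative of the coset $\tau(a_1)H$, while that of $\alpha^{-1}$ is the transversal representative of $a_\ell^{-1}H$. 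Since $\tau(\alpha)=\alpha^{-1}$, uniqueness of the normal form (Theorem~\ref{thm3_4}) forces $\tau(a_1)H=a_\ell^{-1}H$, whence $a_\ell=c\,\tau(a_1)^{-1}$ for some $c\in H$. Therefore $\alpha=a_1\,(\alpha''c)\,\tau(a_1)^{-1}=a_1\bullet(\alpha''c)$, so $\gamma:=\alpha''c$ lies in the orbit of $\alpha$ (in particular $\gamma\in X$), and $\ell(\gamma)\leq\ell(\alpha'')=\ell-2<\ell$ because right multiplication by an element of $H$ does not increase the number of syllables. By the induction hypothesis the orbit of $\gamma$ --- which is the orbit of $\alpha$ --- meets $H$, and the induction is complete.

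The one step that is not routine bookkeeping is the equality $\tau(a_1)H=a_\ell^{-1}H$, which says that the outermost syllables of $\alpha$ at its two ends correspond under $\tau$; this is precisely where the structure of the amalgamated product is used, through the uniqueness of the Serre normal form combined with the recipe of Proposition~\ref{prop3_5} for reading its first syllable off an arbitrary reduced expression. Everything else --- that $\bullet$ is a well-defined action preserving $X$, that replacing $\alpha$ by $\gamma=\alpha''c$ stays inside one orbit, and the accounting of syllable lengths --- is straightforward.
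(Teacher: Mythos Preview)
Your proof is correct and follows essentially the same route as the paper's: both induct on the syllable length of $\alpha$, use the amalgam normal form to match the outermost syllables (your equality $\tau(a_1)H=a_\ell^{-1}H$ is exactly the paper's $\theta_\ell\beta=\beta_1\,\tau(\theta_1)^{-1}$), and then peel them off to drop the length by two. Your twisted-conjugation framing is a clean way to package the recursion, and your direct comparison of the first syllables of the normal forms of $\tau(\alpha)$ and $\alpha^{-1}$ is marginally slicker than the paper's analysis of cancellation in $\alpha\,\tau(\alpha)=1$, but the substance is the same.
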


\begin{proof}
Since $\tau (G_1) = G_2$ and $\tau (G_2) = G_1$, we have $\tau (H) = H$.
Let $T_1$ be a transversal of $H$ in $G_1$.
Then $T_2 = \tau (T_1)$ is a transversal of $H$ in $G_2$.
Let $\alpha \in G$ such that $\tau (\alpha) = \alpha^{-1}$.
Let $\alpha = \theta_1 \theta_2 \cdots \theta_\ell \beta$ be the normal form of $\alpha$.
We prove by induction on $\ell$ that there exist $\alpha' \in G$ and $\beta' \in H$ such that $\tau (\beta') = \beta'^{-1}$ and $\alpha = \alpha' \beta'\, \tau(\alpha'^{-1})$.
The case $\ell = 0$ being trivial we can suppose that $\ell \ge 1$ and that the inductive hypothesis holds. We have $1 = \alpha\, \tau(\alpha) = \theta_1 \cdots \theta_\ell \beta\,\tau(\theta_1) \cdots \tau(\theta_\ell)\, \tau(\beta)$.
By Proposition \ref{prop3_5} we must have $\theta_\ell \beta\, \tau(\theta_1) \in H$, namely, there exists $\beta_1 \in H$ such that $\theta_\ell \beta = \beta_1\,\tau(\theta_1)^{-1}$.
Note that this inclusion implies that $\ell$ is even (and therefore $\ell \ge 2$), since we should have either $\theta_\ell, \tau (\theta_1) \in G_1 \setminus H$ or $\theta_\ell, \tau (\theta_1) \in G_2 \setminus H$.
Thus, $ \alpha = \theta_1 \theta_2 \cdots \theta_{\ell-1} \beta_1\, \tau(\theta_1^{-1}) = \theta_1 \alpha_1\, \tau(\theta_1^{-1})$, where $\alpha_1 = \theta_2 \cdots \theta_{\ell-1} \beta_1$.
We have $\tau (\alpha_1) = \alpha_1^{-1}$ since $\tau (\alpha) = \alpha^{-1}$, so, by induction, there exist $\alpha_1' \in G$ and $\beta' \in H$ such that $\tau (\beta') = \beta'^{-1}$ and $\alpha_1 = \alpha_1' \beta'\, \tau(\alpha_1'^{-1})$.
We set $\alpha' = \theta_1 \alpha_1'$.
Then $\tau (\beta') = \beta'^{-1}$ and $\alpha = \alpha' \beta'\, \tau(\alpha'^{-1})$.
\end{proof}

\begin{lem}\label{lem3_9}
Let $\XX$ be a subset of $\SS$ invariant under the action of $s_1$.
Let $\alpha \in \KB_n [\XX]$ such that $s_1 (\alpha) = \alpha^{-1}$.
Then there exists $\alpha' \in \KB_n[\XX]$ such that $\alpha = \alpha'\, s_1 (\alpha'^{-1})$.
\end{lem}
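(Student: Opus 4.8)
The plan is to imitate the proof of Lemma~\ref{lem3_7}, replacing the use of Lemma~\ref{lem3_6} by Lemma~\ref{lem3_8}, and to run an induction that removes from $\XX$, two at a time, the generators moved by $s_1$. The $s_1$-orbits in $\SS$ are the singletons $\{\delta_{i,j}\}$ with $i,j\ge 3$ (these are the elements of $\SS_3$), together with the two-element orbits $\{\delta_{1,2},\delta_{2,1}\}$, $\{\delta_{1,k},\delta_{2,k}\}$ and $\{\delta_{k,1},\delta_{k,2}\}$ for $3\le k\le n$; for each two-element orbit $\{\delta,\delta'\}$ an elementary check on the indices (the same one already used inside the proof of Lemma~\ref{lem3_7}) shows that $m_{\delta,\delta'}=\infty$ in the Coxeter matrix defining $\KB_n$. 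Since $\XX$ is $s_1$-invariant, $\XX\setminus\SS_3$ is a union of such two-element orbits, and I will prove the statement of the lemma for all $s_1$-invariant $\XX\subseteq\SS$ simultaneously, by induction on the number $r(\XX)$ of two-element $s_1$-orbits contained in $\XX$.

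When $r(\XX)=0$ we have $\XX\subseteq\SS_3$, hence $s_1$ acts trivially on $\KB_n[\XX]$ and the hypothesis $s_1(\alpha)=\alpha^{-1}$ reads $\alpha=\alpha^{-1}$, that is, $\alpha^2=1$; since $\KB_n[\XX]$ is an Artin group (Theorem~\ref{thm3_2}) it is torsion-free, so $\alpha=1$ and $\alpha'=1$ works. For $r(\XX)\ge 1$, choose a two-element orbit $\{\delta,\delta'\}\subseteq\XX$ with $\delta'=s_1(\delta)$ and set $\XX_0=\XX\setminus\{\delta,\delta'\}$, $\XX'=\XX_0\cup\{\delta\}$, $\XX''=\XX_0\cup\{\delta'\}$. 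As $m_{\delta,\delta'}=\infty$, Lemma~\ref{lem3_3} gives $\KB_n[\XX]=\KB_n[\XX']*_{\KB_n[\XX_0]}\KB_n[\XX'']$, with $s_1$ swapping the two vertex groups and fixing $\KB_n[\XX_0]$ setwise. Applying Lemma~\ref{lem3_8} with $\tau=s_1$ produces $\alpha_1\in\KB_n[\XX]$ and $\beta_1\in\KB_n[\XX_0]$ with $s_1(\beta_1)=\beta_1^{-1}$ and $\alpha=\alpha_1\,\beta_1\,s_1(\alpha_1^{-1})$. Now $\XX_0$ is $s_1$-invariant with $r(\XX_0)=r(\XX)-1$, so the induction hypothesis supplies $\alpha_2\in\KB_n[\XX_0]\subseteq\KB_n[\XX]$ with $\beta_1=\alpha_2\,s_1(\alpha_2^{-1})$; substituting, $\alpha=(\alpha_1\alpha_2)\,s_1\big((\alpha_1\alpha_2)^{-1}\big)$, so $\alpha'=\alpha_1\alpha_2$ is as required.

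The step that carries the real weight is the base case: it is precisely there that one needs $\KB_n[\XX\cap\SS_3]$, a parabolic subgroup of the Artin group $\KB_n$, to be torsion-free, and this is the one input not already assembled in Section~\ref{sec3} (it comes from the study of $\KB_n$ via Godelle--Paris \cite{GodPar1} together with van der Lek's theorem). The only other thing requiring attention is the combinatorial bookkeeping shared with Lemma~\ref{lem3_7}: one must split off generators two at a time so that the two generators sent to opposite vertex groups of the amalgam are never linked by a braid relation — which is exactly why only the three families of two-element orbits listed above occur — and one must observe that the successive corrections $\alpha_1,\alpha_2,\dots$ multiply into a single element of $\KB_n[\XX]$, which the displayed computation makes transparent. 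Beyond organizing this induction I anticipate no real difficulty, since Lemma~\ref{lem3_8} has been tailored precisely to supply the inductive step.
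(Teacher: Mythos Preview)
Your proof is correct and follows essentially the same approach as the paper: both arguments repeatedly apply Lemma~\ref{lem3_8} to peel off the two-element $s_1$-orbits one at a time until only $\XX\cap\SS_3$ remains, then finish with the torsion-freeness of $\KB_n$ from Godelle--Paris. The only difference is organizational---you induct cleanly on the number $r(\XX)$ of two-element orbits, whereas the paper runs through the explicit filtration $\UU,\VV_k,\WW_k$ already set up in Lemma~\ref{lem3_7}---but the underlying mechanism is identical.
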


\begin{proof}
Let $\alpha \in \KB_n [\XX]$ such that $s_1(\alpha) = \alpha^{-1}$.
Let $\UU = \{ \delta_{i,j} \in \XX \mid (i,j) \not \in \{(1,2), (2,1)\}\}$.
We show that there exist $\alpha' \in \KB_n [\XX]$ and $\beta' \in \KB_n [\UU]$ such that $s_1 (\beta') = \beta'^{-1}$ and $\alpha = \alpha' \beta' \, s_1 (\alpha'^{-1})$.
If $\XX = \UU$ there is nothing to prove. 
We can therefore suppose that $\XX \neq \UU$.
Since $\XX$ is invariant under the action of $s_1$ we have $\XX = \UU \cup \{ \delta_{1,2}, \delta_{2,1} \}$. 
Set $\UU' = \UU \cup \{ \delta_{1,2} \}$ and $\UU'' = \UU \cup \{ \delta_{2,1} \}$.
By Lemma \ref{lem3_3}, $KB_n [\XX] = \KB_n [\UU'] *_{\KB_n[\UU]} \KB_n [\UU'']$.
Moreover, $s_1 (\KB_n [\UU']) = \KB_n [\UU'']$ and $s_1 (\KB_n [\UU'']) = \KB_n [\UU']$.
So, by Lemma \ref{lem3_8}, there exist $\alpha' \in \KB_n [\XX]$ and $\beta' \in \KB_n [\UU]$ such that $s_1 (\beta') = \beta'^{-1}$ and $\alpha = \alpha' \beta' \, s_1 (\alpha'^{-1})$.

For $2 \le k \le n$ we set $\VV_k = \{ \delta_{i,j} \in \XX \mid (i,j) \not \in (\{ 1, 2\} \times \{1, 2, \dots, k\}) \}$.
We show by induction on  $k$ that there exist $\alpha' \in \KB_n [\XX]$ and $\beta' \in \KB_n [\VV_k]$ such that $s_1 (\beta') = \beta'^{-1}$ and $\alpha = \alpha' \beta'\, s_1 (\alpha'^{-1})$.
Since  $\VV_2 = \UU$ the case  $k=2$ follows from the previous paragraph.
Suppose that $k \ge 3$ and that the inductive hypothesis holds. 
So, there exist $\alpha_1' \in \KB_n [\XX]$ and $\beta_1' \in \KB_n [\VV_{k-1}]$ such that $s_1 (\beta_1') = \beta_1'^{-1}$ and $\alpha = \alpha_1' \beta_1'\, s_1(\alpha_1'^{-1})$.
If $\VV_k = \VV_{k-1}$ there is nothing to prove.
Thus, we can suppose that  $\VV_k \neq \VV_{k-1}$.
Since $\VV_{k-1}$ is invariant under the action of $s_1$ we have $\VV_{k-1} = \VV_k \cup \{ \delta_{1,k}, \delta_{2,k} \}$.
Set $\VV_k' = \VV_k \cup \{ \delta_{1,k} \}$ and $\VV_k'' = \VV_k \cup \{ \delta_{2,k} \}$.
By Lemma \ref{lem3_3}, $\KB_n [\VV_{k-1}] = \KB_n [\VV_k'] *_{\KB_n [\VV_k]} \KB_n [\VV_k'']$.
Moreover, $s_1 (\KB_n [\VV_k']) = \KB_n [\VV_k'']$ and $s_1 (\KB_n [\VV_k'']) = \KB_n [\VV_k']$.
By Lemma \ref{lem3_8} it follows that there exist $\alpha_2' \in \KB_n [\VV_{k-1}]$ and $\beta' \in \KB_n [\VV_k]$ such that $s_1 (\beta') = \beta'^{-1}$ and  $\beta_1' = \alpha_2' \beta' \, s_1 (\alpha_2'^{-1})$.
Set $\alpha' = \alpha_1' \alpha_2'$.
Then $\beta' \in \KB_n [\VV_k]$, $s_1 (\beta') = \beta'^{-1}$ and $\alpha = \alpha' \beta'\, s_1 (\alpha'^{-1})$.

For $2 \le k \le n$ we set $\WW_k = \{ \delta_{i,j} \in \XX \mid (i,j) \not\in (\{1,2\} \times \{1,2, \dots, n\}) \text{ and } (i,j) \not\in (\{ 1,2, \dots, k\} \times \{1,2\}) \}$.
We show by induction on $k$ that there exist $\alpha' \in \KB_n [\XX]$ and $\beta' \in \KB_n [\WW_k]$ such that $s_1 (\beta') = \beta'^{-1}$ and $\alpha = \alpha' \beta'\,s_1 (\alpha'^{-1})$.
Since $\WW_2 = \VV_n$ the case $k=2$ follows from the previous paragraph.
Suppose that $k \ge 3$ and that the inductive hypothesis holds. 
So, there exist $\alpha_1' \in \KB_n [\XX]$ and $\beta_1' \in \KB_n [\WW_{k-1}]$ such that $s_1 (\beta_1') = \beta_1'^{-1}$ and $\alpha = \alpha_1' \beta_1'\, s_1 (\alpha_1'^{-1})$.
If $\WW_k = \WW_{k-1}$ there is nothing to prove.
We can therefore suppose that $\WW_k \neq \WW_{k-1}$.
Since $\WW_{k-1}$ is invariant under the action of $s_1$ we have $\WW_{k-1} = \WW_k \cup \{ \delta_{k,1}, \delta_{k,2} \}$.
Set $\WW_k' = \WW_k \cup \{ \delta_{k,1} \}$ and $\WW_k'' = \WW_k \cup \{ \delta_{k,2} \}$.
By Lemma \ref{lem3_3}, $\KB_n [\WW_{k-1}] = \KB_n [\WW_k'] *_{\KB_n [\WW_k]} \KB_n[\WW_k'']$.
Moreover, $s_1 (\KB_n [\WW_k']) = \KB_n [\WW_k'']$ and $s_1 (\KB_n [\WW_k'']) = \KB_n [\WW_k']$.
By Lemma \ref{lem3_8} it follows that there exist $\alpha_2' \in \KB_n [\WW_{k-1}]$ and $\beta' \in \KB_n [\WW_k]$ such that $s_1 (\beta') = \beta'^{-1}$ and $\beta_1' = \alpha_2' \beta'\, s_1 (\alpha_2'^{-1})$.
Set $\alpha' = \alpha_1' \alpha_2'$.
Then  $\beta' \in \KB_n [\WW_k]$, $s_1 (\beta') = \beta'^{-1}$ and $\alpha = \alpha' \beta'\, s_1( \alpha'^{-1})$.

Notice that $\WW_n = \XX \cap \SS_3$.
Recall that $s_1 (\beta') = \beta'$ for all $\beta' \in \KB_n [\XX \cap \SS_3] = \KB_n [\WW_n]$ (see Lemma  \ref{lem3_7}).
By the above there exist $\alpha' \in \KB_n [\XX]$ and $\beta' \in \KB_n [\WW_n]$ such that $s_1 (\beta') = \beta'^{-1}$ and $\alpha = \alpha' \beta' \, s_1 (\alpha'^{-1})$.
So, $\beta' = s_1(\beta') = \beta'^{-1}$, hence $\beta'^2 = 1$, and therefore $\beta' = 1$, since, by Godelle--Paris \cite{GodPar1}, $\KB_n$ is torsion free. 
So, $\alpha = \alpha'\, s_1 (\alpha'^{-1})$.
\end{proof}

\begin{lem}\label{lem3_10}
Let $G_1, G_2, \dots, G_p, H$ be a collection of groups.
We suppose that $H$ is a subgroup of $G_j$ for all $j \in \{1, \dots, p\}$ and we consider the amalgamated product $G = G_1 *_H G_2 *_H \cdots *_H G_p$.
Let $\tau_1, \tau_2 : G \to G$ be two automorphisms satisfying the following properties:
\begin{itemize}
\item[(a)]
$\tau_1^2 = \tau_2^2 = 1$ and $\tau_1 \tau_2 \tau_1 = \tau_2 \tau_1 \tau_2$.
\item[(b)]
For all $i \in \{1,2\}$ and $j \in \{1, \dots, p\}$ there exists $k \in \{1, \dots, p\}$ such that $\tau_i (G_j) = G_k$.
\item[(c)]
For all $j \in \{1, \dots, p\}$ there exists $i \in \{1,2\}$ such that $\tau_i(G_j) \neq G_j$.
\item[(d)]
For all $i \in \{1, 2 \}$ we have $\tau_i (H) = H$.
\item[(e)]
For all $i \in \{1, 2\}$ and $\gamma \in H$ such that $\tau_i (\gamma) = \gamma^{-1}$ there exists $\delta \in H$ such that $\gamma = \delta\, \tau_i (\delta^{-1})$.
\end{itemize}
Let  $\alpha \in G$ satisfying the following equation:
\begin{equation} \label{eq:tresse}
\alpha \, \tau_2 (\alpha^{-1})\, (\tau_2 \tau_1) (\alpha)\, (\tau_2 \tau_1 \tau_2) (\alpha^{-1})\, (\tau_1 \tau_2) (\alpha)\, \tau_1 (\alpha^{-1}) = 1\,.
\end{equation}
Then there exist $\alpha', \alpha'' \in G$ and $\beta \in H$ such that $\alpha = \alpha' \beta \alpha''$, $\tau_1 (\alpha') = \alpha'$, $\tau_2 (\alpha'') = \alpha''$ and $\beta$ satisfies Equation (\ref{eq:tresse}).
\end{lem}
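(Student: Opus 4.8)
The plan is to first rewrite Equation~(\ref{eq:tresse}) as a single invariance statement, use this to reduce the lemma to a double-coset membership, and then prove that membership by a normal-form induction in the spirit of Lemma~\ref{lem3_8}. Write $G^{\tau_i}=\{\gamma\in G\mid\tau_i(\gamma)=\gamma\}$ for $i\in\{1,2\}$.

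\emph{Reformulating Equation~(\ref{eq:tresse}).} Using only $\tau_1^2=1$ and $\tau_1\tau_2\tau_1=\tau_2\tau_1\tau_2$, a direct computation shows that the left-hand side of Equation~(\ref{eq:tresse}) equals $\Delta\,\tau_1(\Delta^{-1})$, where
\[
\Delta=\Delta(\alpha):=\alpha\,\tau_2(\alpha^{-1})\,(\tau_2\tau_1)(\alpha)\,.
\]
Hence Equation~(\ref{eq:tresse}) is equivalent to $\Delta\in G^{\tau_1}$. Since Equation~(\ref{eq:tresse}) is also equivalent to the equation obtained from it by exchanging $\tau_1$ and $\tau_2$ and replacing $\alpha$ by $\alpha^{-1}$, it is equivalent as well to $\Delta^\vee:=\alpha^{-1}\,\tau_1(\alpha)\,(\tau_1\tau_2)(\alpha^{-1})\in G^{\tau_2}$. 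Finally, when $\tau_1(\alpha')=\alpha'$ and $\tau_2(\alpha'')=\alpha''$ one computes $\Delta(\alpha'\beta\alpha'')=\alpha'\,\Delta(\beta)\,(\tau_2\tau_1)(\alpha'')$ and, likewise, $\tau_1(\Delta(\alpha'\beta\alpha''))=\alpha'\,\tau_1(\Delta(\beta))\,(\tau_2\tau_1)(\alpha'')$; hence $\alpha'\beta\alpha''$ satisfies Equation~(\ref{eq:tresse}) if and only if $\beta$ does. So the lemma reduces to the following: \emph{if $\alpha$ satisfies Equation~(\ref{eq:tresse}), then $\alpha\in G^{\tau_1}\cdot H\cdot G^{\tau_2}$}; the element $\beta$ coming from any such factorisation is then automatically a solution of Equation~(\ref{eq:tresse}).

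\emph{Equivariant transversals.} By (b) and (d) each $\tau_i$ permutes $\{G_1,\dots,G_p\}$ and fixes $H$. For $i\in\{1,2\}$ I would fix a system $(T^{(i)}_j)_{j}$ of transversals of $H$ in the $G_j$ that is $\tau_i$-equivariant, i.e.\ $\tau_i(T^{(i)}_j)=T^{(i)}_{\tau_i(j)}$ for all $j$. For factors that $\tau_i$ moves this is just a matter of transporting one transversal to its $\tau_i$-partner; for a factor $G_j$ with $\tau_i(G_j)=G_j$ one needs a $\tau_i$-invariant transversal of $H$ in $G_j$, and this is exactly where condition (e) enters: for a $\tau_i$-fixed left coset $gH$ one has $\tau_i(g)=g\gamma$ with $\gamma\in H$, the relation $\tau_i^2=1$ forces $\tau_i(\gamma)=\gamma^{-1}$, so by (e) there is $\delta\in H$ with $\gamma=\delta\,\tau_i(\delta^{-1})$, and $g\delta$ is a $\tau_i$-fixed representative of $gH$. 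With respect to such a system, the normal form (Theorem~\ref{thm3_4}) of $\tau_i(\eta)$ is obtained from that of $\eta$ by applying $\tau_i$ to each syllable; comparing normal forms (and using Lemma~\ref{lem3_6} for the factors that $\tau_i$ swaps) then identifies $G^{\tau_i}$ as the set of elements all of whose syllables lie in $\tau_i$-fixed factors and are themselves $\tau_i$-fixed. By (c), no factor is fixed by both $\tau_1$ and $\tau_2$; one should not try to make the two systems $(T^{(1)}_j)$ and $(T^{(2)}_j)$ coincide, since full $\langle\tau_1,\tau_2\rangle$-equivariance would require a cohomological vanishing for order-$3$ elements that is not available.

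\emph{The induction, and the main obstacle.} One now proves that $\alpha$ satisfying Equation~(\ref{eq:tresse}) lies in $G^{\tau_1}\cdot H\cdot G^{\tau_2}$ by induction on the normal-form length $\ell$ of $\alpha$ (with respect to $G=G_1*_H\cdots*_H G_p$), the case $\ell=0$ being trivial. For $\ell\geq1$, the key point is that $\Delta\in G^{\tau_1}$ has the restricted normal form described above, whereas the concatenation $\alpha\cdot\tau_2(\alpha^{-1})\cdot(\tau_2\tau_1)(\alpha)$ defining $\Delta$ is a priori long; so, exactly as in the proof of Lemma~\ref{lem3_8} via Proposition~\ref{prop3_5}, there must be heavy cancellation at its two junctions, and symmetrically for $\Delta^\vee\in G^{\tau_2}$. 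Reading off what this cancellation imposes on the outermost syllables of $\alpha$, one finds that either the leftmost syllable of $\alpha$ lies in a $\tau_1$-fixed factor — in which case, after an $H$-correction provided by (e), a $\tau_1$-invariant factor $\alpha_1'$ can be split off on the left — or the relation ties that syllable to the rightmost syllable of $\alpha$ in such a way that a $\tau_2$-invariant factor $\alpha_1''$ can instead be split off on the right. Either way, what remains is strictly shorter and still satisfies Equation~(\ref{eq:tresse}) (by the reformulation of the opening paragraph, run in reverse), so the induction hypothesis applies and $\alpha'$, $\alpha''$ are rebuilt by concatenation. The main obstacle is precisely this case analysis: deciding which end of $\alpha$ admits a reduction — which is governed by how the outermost syllables sit relative to the $\tau_1$- and $\tau_2$-orbits on $\{G_1,\dots,G_p\}$ — together with keeping the two normal-form bookkeepings (relative to $(T^{(1)}_j)$ and to $(T^{(2)}_j)$) coherent throughout the recursion and checking that the successive $H$-corrections do not destroy the invariance of the accumulated $\alpha'$ and $\alpha''$. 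The degenerate situations, where $\langle\tau_1,\tau_2\rangle$ is a proper quotient of $\SSS_3$ (for instance $\tau_1=\tau_2$), are easier and would be treated separately.
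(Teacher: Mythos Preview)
Your strategy is sound and is essentially the paper's own argument: induct on the syllable length $\ell$ of $\alpha$, and at each step use Proposition~\ref{prop3_5} on the six-term product to force either $\tau_1(\alpha_1^{-1})\alpha_1\in H$ (so, via (e), a $\tau_1$-fixed piece peels off on the left) or $\alpha_\ell\,\tau_2(\alpha_\ell^{-1})\in H$ (so a $\tau_2$-fixed piece peels off on the right), then recurse. Your reformulation of Equation~(\ref{eq:tresse}) as $\Delta\in G^{\tau_1}$ and the equivariant-transversal description of $G^{\tau_i}$ are pleasant but inessential repackagings of the same mechanism; note that the paper handles $\ell=1$ as a separate base case---this is where condition (c) is actually invoked---and your uniform treatment of $\ell\ge 1$ would still need that analysis.
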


\begin{proof}
Let $\alpha \in G$ satisfying Equation (\ref{eq:tresse}).
It is easily checked that, if $\alpha$ is written $\alpha = \alpha' \beta \alpha''$, where $\tau_1 (\alpha') = \alpha'$ and $\tau_2 (\alpha'') = \alpha''$, then $\beta$ also satisfies Equation (\ref{eq:tresse}).
So, it suffices to show that there exist $\alpha', \alpha'' \in G$ and $\beta \in H$ such that $\alpha = \alpha' \beta \alpha''$, $\tau_1 (\alpha') = \alpha'$ and $\tau_2 (\alpha'') = \alpha''$.
If $\alpha \in H$ there is nothing to prove.
We can therefore suppose that $\alpha \not\in H$.
We write  $\alpha$ in the form $\alpha = \alpha_1 \alpha_2 \cdots \alpha_\ell$ such that
\begin{itemize}
\item[(a)]
for all $i \in \{1, \dots, \ell\}$ there exists $j = j(i) \in \{1, \dots, p\}$ such that $\alpha_i \in G_j \setminus H$,
\item[(b)]
$j(i) \neq j(i+1)$ for all $i \in \{1, \dots, \ell-1\}$.
\end{itemize}
We argue by induction on $\ell$.

Suppose first that $\ell = 1$.
We can assume without loss of generality that $\alpha \in G_1\setminus H$.
We set $\beta_1 = \alpha$, $\beta_2 = \tau_2 (\alpha^{-1})$, $\beta_3 = (\tau_2 \tau_1) (\alpha)$, $\beta_4 = (\tau_2 \tau_1 \tau_2)(\alpha^{-1})$, $\beta_5 = (\tau_1 \tau_2) (\alpha)$, and $\beta_6 = \tau_1 (\alpha^{-1})$.
By hypothesis we have $\beta_1 \beta_2 \beta_3 \beta_4 \beta_5 \beta_6 = 1$ and, for each $i \in \{1, \dots, 6\}$, there exists $j = j(i) \in \{1, \dots, p\}$ such that $\beta_i \in G_j \setminus H$.
If we had $\tau_1 (G_1) \neq G_1$ and $\tau_2 (G_1) \neq G_1$, then we would have $j(i) \neq j(i+1)$ for all $i \in \{1, \dots, 5\}$, thus, by Proposition \ref{prop3_5}, we would have $\beta_1 \beta_2 \beta_3 \beta_4 \beta_5 \beta_6 \neq 1$: contradiction. 
So, either $\tau_1 (G_1) = G_1$ or $\tau_2 (G_1) = G_1$.
On the other hand, by Condition (c) in the statement of the lemma, either $\tau_1 (G_1) \neq G_1$ or $\tau_2 (G_1) \neq G_1$.
Thus, either $\tau_1 (G_1) = G_1$ and $\tau_2 (G_1) \neq G_1$, or $\tau_1 (G_1) \neq G_1$ and $\tau_2 (G_1) = G_1$.
We assume that $\tau_1 (G_1) = G_1$ and $\tau_2 (G_1) \neq G_1$.
The case $\tau_1 (G_1) \neq G_1$ and $\tau_2 (G_1) = G_1$ is proved in a similar way.
So, $j(1) \neq j(2)$, $j(2) = j(3)$, $j(3) \neq j(4)$, $j(4) = j(5)$, and $j(5) \neq j(6)$.
Since $\beta_1 \beta_2 \beta_3 \beta_4 \beta_5 \beta_6 = 1$, by Proposition \ref{prop3_5}, either $\beta_2 \beta_3 \in H$, or $\beta_4 \beta_5 \in H$, that is, $\alpha^{-1} \, \tau_1 (\alpha) = \gamma \in H$.
Now, we have $\tau_1 (\gamma) = \tau_1 (\alpha^{-1}) \alpha = \gamma^{-1}$, hence, by Condition (e) in the statement of the lemma, there exists $\delta \in H$ such that $\gamma = \delta \, \tau_1 (\delta^{-1})$.
We set  $\alpha' = \alpha \delta$, $\alpha'' = 1$ and $\beta = \delta^{-1}$.
Then $\alpha = \alpha' \beta \alpha''$,  $\tau_1 (\alpha') = \alpha'$, $\tau_2 (\alpha'') = \alpha''$ and $\beta \in H$. 

Suppose that $\ell \ge 2$ and that the inductive hypothesis holds. 
It follows from Proposition \ref{prop3_5} and Equation (\ref{eq:tresse}) that either $\alpha_\ell \, \tau_2 (\alpha_\ell^{-1}) \in H$ or $\tau_1 (\alpha_1^{-1})\, \alpha_1 \in H$.
Suppose that $\alpha_\ell\, \tau_2(\alpha_\ell^{-1}) \in H$.
Then there exists $\gamma \in H$ such that $\alpha_\ell = \gamma\,\tau_2(\alpha_\ell)$.
By applying $\tau_2$ to this equality we obtain $\tau_2(\alpha_\ell) = \tau_2(\gamma)\,\alpha_\ell$, and therefore $\tau_2(\gamma)= \gamma^{-1}$.
By hypothesis there exists $\delta \in H$ such that $\gamma = \delta^{-1}\,\tau_2(\delta)$.
We set $ \alpha_\ell' = 1$, $\alpha_\ell'' = \delta \alpha_{\ell}$, $\alpha_{\ell-1}'' = \alpha_{\ell-1}\delta^{-1}$ and $\beta_1 = \alpha_1 \cdots \alpha_{\ell-2} \alpha_{\ell-1}''$.
We have $\alpha = \alpha_\ell' \beta_1 \alpha_\ell''$, $\tau_1 (\alpha_\ell') = \alpha_\ell'$ and $\tau_2 (\alpha_\ell'') = \alpha_\ell''$, hence $\beta_1$ satisfies Equation (\ref{eq:tresse}).
By induction, there exists $\gamma', \gamma'' \in G$ and $\beta \in H$ such that $\beta_1 = \gamma' \beta \gamma''$, $\tau_1 (\gamma') = \gamma'$ and $\tau_2 (\gamma'') = \gamma''$.
Set $\alpha' = \alpha_\ell' \gamma'$ and $\alpha'' = \gamma'' \alpha_\ell''$.
Then $\alpha = \alpha' \beta \alpha''$, $\tau_1 (\alpha') = \alpha'$ and $\tau_2 (\alpha'') = \alpha''$.
The case $\tau_1(\alpha_1^{-1}) \, \alpha_1 \in H$ can be proved in a similar way.
\end{proof}

\begin{lem}\label{lem3_11}
Let $\XX$ be a subset of $\SS$ invariant under the action of $s_1$ and under the action of $s_2$.
Let $\alpha \in \KB_n[\XX]$ such that
\begin{equation}
\alpha\, s_2(\alpha^{-1})\, (s_2 s_1)(\alpha)\, (s_2 s_1 s_2)(\alpha^{-1}) \, (s_1 s_2) (\alpha) \, s_1 (\alpha^{-1}) = 1\,.
\label{eq3_2}
\end{equation}
Then there exist $\alpha', \alpha'' \in \KB_n[\XX]$ such that  $s_1 (\alpha') = \alpha'$, $s_2 (\alpha'') = \alpha''$ and $\alpha = \alpha' \alpha''$.
\end{lem}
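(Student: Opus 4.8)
The plan is to deduce Lemma \ref{lem3_11} from the abstract Lemma \ref{lem3_10} by peeling off, one $\langle s_1,s_2\rangle$-orbit at a time, all the generators $\delta_{i,j}$ whose index set meets $\{1,2,3\}$, in the same spirit in which Lemma \ref{lem3_9} was deduced from Lemma \ref{lem3_8}, except that $s_1$ and $s_2$ are now handled simultaneously. Throughout one takes $\tau_1=s_1$ and $\tau_2=s_2$ acting on $\KB_n$. Hypothesis (a) of Lemma \ref{lem3_10} holds in $\SSS_n$; hypothesis (d) will hold because every subset of $\SS$ that occurs below is invariant under $s_1$ and $s_2$ (a union of orbits being removed from an invariant set); and hypothesis (e) is exactly Lemma \ref{lem3_9} for $\tau_1=s_1$, together with its $s_2$-analogue, which follows by conjugating the statement of Lemma \ref{lem3_9} by the permutation $(1,2,3)$ (which conjugates $s_1$ to $s_2$), as in Corollary \ref{corl3_7b}. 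So the only real work is to arrange, at each stage, an amalgamated product decomposition on which (b) and (c) hold.

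The delicate stage is the central block $\mathcal B=\{\delta_{i,j}\mid i,j\in\{1,2,3\}\}$, which is a single $\langle s_1,s_2\rangle$-orbit of cardinality $6$: thus either $\mathcal B\subset\XX$ (and it must be dealt with all at once) or $\mathcal B\cap\XX$ is empty (and this stage is void). The combinatorial point is that, in the Coxeter matrix of $\KB_n$, the six elements of $\mathcal B$ split into the two triples $\mathcal B_1=\{\delta_{1,2},\delta_{2,3},\delta_{3,1}\}$ and $\mathcal B_2=\{\delta_{2,1},\delta_{1,3},\delta_{3,2}\}$ in such a way that any two generators lying in the same triple are joined by a relation with $m=3$, while $m_{s,t}=\infty$ for every $s\in\mathcal B_1$ and $t\in\mathcal B_2$ (so the $\infty$-graph on $\mathcal B$ is a copy of $K_{3,3}$). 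With $\XX_0=\XX\setminus\mathcal B$, Lemma \ref{lem3_3} then gives $\KB_n[\XX]=\KB_n[\XX_0\cup\mathcal B_1]*_{\KB_n[\XX_0]}\KB_n[\XX_0\cup\mathcal B_2]$, and, since both $s_1$ and $s_2$ interchange $\mathcal B_1$ and $\mathcal B_2$, they interchange the two factors; hence (b) and (c) hold, Lemma \ref{lem3_10} applies, and it yields $\alpha=\alpha_1'\beta_1\alpha_1''$ with $s_1(\alpha_1')=\alpha_1'$, $s_2(\alpha_1'')=\alpha_1''$, and $\beta_1\in\KB_n[\XX_0]$ again satisfying Equation (\ref{eq3_2}).

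For the remaining stages, for each $k$ with $4\le k\le n$ one peels off in turn the two $\langle s_1,s_2\rangle$-orbits $\mathcal O_k=\{\delta_{1,k},\delta_{2,k},\delta_{3,k}\}$ and $\mathcal O_k'=\{\delta_{k,1},\delta_{k,2},\delta_{k,3}\}$ (if they meet $\XX$; otherwise skip). In each case the three generators of the orbit are pairwise $\infty$-related (they share the index $k$ in one and the same position), so two applications of Lemma \ref{lem3_3} express the current $\KB_n[\YY]$ as a threefold amalgamated product $\KB_n[\YY_1]*_{\KB_n[\YY_0]}\KB_n[\YY_2]*_{\KB_n[\YY_0]}\KB_n[\YY_3]$, where $\YY_0$ is $\YY$ with that orbit removed and $\YY_j$ adds back the $j$-th generator; there $s_1$ and $s_2$ permute $\KB_n[\YY_1],\KB_n[\YY_2],\KB_n[\YY_3]$ according to the standard action of $\SSS_3$ on $\{1,2,3\}$, so no factor is fixed by both, and Lemma \ref{lem3_10} applies once more. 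Feeding the output $\beta$ of each stage into the next, and using that a product of $s_1$-invariant (resp. $s_2$-invariant) elements is again $s_1$-invariant (resp. $s_2$-invariant), after these finitely many applications one obtains $\alpha=\alpha'\beta\alpha''$ with $s_1(\alpha')=\alpha'$, $s_2(\alpha'')=\alpha''$ and $\beta\in\KB_n[\XX\cap\SS_4]$. Finally every $\delta_{i,j}\in\SS_4$ has $i,j\ge 4$ and so is fixed by both $s_1$ and $s_2$; in particular $s_1(\beta)=\beta$, whence $\alpha'\beta$ is still $s_1$-invariant and $\alpha=(\alpha'\beta)\alpha''$ is the required factorization.

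The main obstacle is the central block: it cannot be peeled one index or one generator at a time, since such a splitting fails to be $\langle s_1,s_2\rangle$-equivariant, so one genuinely needs to identify the $K_{3,3}$-pattern of the $\infty$-relation on $\mathcal B$ and observe that both $s_1$ and $s_2$ exchange its two sides. Granting that, everything else is a straightforward iteration of Lemma \ref{lem3_10}, entirely parallel to the proof of Lemma \ref{lem3_9}.
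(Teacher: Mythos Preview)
Your proof is correct and follows the same overall strategy as the paper: iterate Lemma \ref{lem3_10} along amalgamated-product decompositions obtained from Lemma \ref{lem3_3}, peeling off one $\langle s_1,s_2\rangle$-orbit of generators at a time, with hypothesis (e) supplied by Lemma \ref{lem3_9} (and its $s_2$-analogue). The identification of the $K_{3,3}$ pattern on the central block $\mathcal B$ and of the threefold amalgam for each peripheral orbit $\mathcal O_k,\mathcal O_k'$ is exactly what the paper uses.

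The one genuine difference is the order in which the orbits are removed. The paper peels off the peripheral orbits $\{\delta_{1,k},\delta_{2,k},\delta_{3,k}\}$ and $\{\delta_{k,1},\delta_{k,2},\delta_{k,3}\}$ first, then splits the remainder as a direct product $\KB_n[\WW_1]\times\KB_n[\WW_2]$ (central block times the $\SS_4$-part), and treats the central block last with $H=\{1\}$; this trivializes hypothesis (e) at that step but forces an appeal to Lemma \ref{lem3_7} to conclude that the resulting $\beta',\beta''\in\KB_n[\WW_1]$ are trivial. You do the central block first with the nontrivial $H=\KB_n[\XX_0]$, then the peripheral orbits, and simply absorb the leftover $\SS_4$-piece into $\alpha'$ since it is fixed by $s_1$. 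Your ordering avoids both the direct-product step and the final use of Lemma \ref{lem3_7}, at the cost of needing hypothesis (e) already at the central-block stage; either way the argument goes through.
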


\begin{proof}
For $4 \le k \le n$ we set $\UU_k = \{\delta_{i,j} \in \XX \mid (i,j) \not \in (\{1,2,3\} \times \{4, \dots, k\})\}$.
We set also $\UU_3 = \XX$.
We show by induction on $k$ that there exist $\alpha', \alpha'' \in \KB_n[\XX]$ and $\beta \in \KB_n[\UU_k]$ such that $\alpha = \alpha' \beta \alpha''$, $s_1 (\alpha') = \alpha'$, $s_2 (\alpha'') = \alpha''$ and $\beta$ satisfies Equation (\ref{eq3_2}).
The case $k = 3$ is true by hypothesis.
Suppose that $4 \le k \le n$ and that the inductive hypothesis holds. 
So, there exist $\alpha_1', \alpha_1'' \in \KB_n[\XX]$ and $\beta_1 \in \KB_n[\UU_{k-1}]$ such that $\alpha = \alpha_1' \beta_1 \alpha_1''$, $s_1 (\alpha_1') = \alpha_1'$, $s_2 (\alpha_1'') = \alpha_1''$ and $\beta_1$ satisfies Equation (\ref{eq3_2}). 
If  $\UU_{k-1} = \UU_k$ there is nothing to prove.
Suppose that  $\UU_{k-1} \neq \UU_k$.
Since $\UU_{k-1}$ is invariant under the action of $s_1$ and under the action of $s_2$, we have $\UU_{k-1} = \UU_k \cup \{ \delta_{1,k}, \delta_{2,k}, \delta_{3,k} \}$. 
Set $G_j = \KB_n[\UU_k \cup \{\delta_{j,k}\}]$ for $j \in \{1,2,3\}$ and $H = \KB_n[\UU_k]$.
By Lemma \ref{lem3_3}, $\KB_n[\UU_{k-1}] = G_1*_H G_2*_H G_3$.
Moreover, we have the following properties.
\begin{itemize}
\item
For each $i \in \{1,2\}$ and each $j \in \{1,2,3\}$ there exists $k \in \{1,2,3\}$ such that $s_i (G_j) = G_k$.
\item
For each $j\in \{1,2,3\}$ there exists $i \in \{1,2\}$ such that $s_i(G_j) \neq G_j$.
\item 
For each  $i \in \{1,2\}$ we have $s_i(H) = H$.
\item
By Lemma \ref{lem3_9}, for each $i \in \{1,2\}$ and each $\gamma \in H$ such that $s_i(\gamma) = \gamma^{-1}$, there exists $\delta \in H$ such that $\gamma = \delta  \, s_i (\delta^{-1})$.
\end{itemize}
By Lemma \ref{lem3_10} it follows that there exist $\alpha_2', \alpha_2'' \in \KB_n[\UU_{k-1}]$ and $\beta \in \KB_n[\UU_k]$ such that $\beta_1 = \alpha_2' \beta \alpha_2''$, $s_1 (\alpha_2') = \alpha_2'$, $s_2 (\alpha_2'') = \alpha_2''$ and $\beta$ satisfies Equation (\ref{eq3_2}).
We set $\alpha'= \alpha_1' \alpha_2'$ and $\alpha'' = \alpha_2'' \alpha_1''$.
Then $\alpha = \alpha' \beta \alpha''$, $s_1 (\alpha') = \alpha'$ and $s_2 (\alpha'') = \alpha''$.

For $4 \le k \le n$ we set $\VV_k = \{\delta_{i,j} \in \UU_n \mid (i,j) \not \in (\{4, \dots, k\} \times \{1,2,3\})\}$.
We set also $\VV_3 = \UU_n$.
We show by induction on $k$ that there exist $\alpha', \alpha'' \in \KB_n[\XX]$ and $\beta \in \KB_n[\VV_k]$ such that $\alpha = \alpha' \beta \alpha''$, $s_1 (\alpha') = \alpha'$, $s_2 (\alpha'') = \alpha''$ and $\beta$ satisfies Equation (\ref{eq3_2}).
The case $k = 3$ is true by the above. 
Suppose that $4 \le k \le n$ and that the inductive hypothesis holds. 
So, there exist $\alpha_1', \alpha_1'' \in \KB_n[\XX]$ and $\beta_1 \in \KB_n[\VV_{k-1}]$ such that $\alpha = \alpha_1' \beta_1 \alpha_1''$, $s_1 (\alpha_1') = \alpha_1'$, $s_2 (\alpha_1'') = \alpha_1''$ and $\beta_1$ satisfies Equation (\ref{eq3_2}).
If $\VV_{k-1} = \VV_k$ there is nothing to prove.
Suppose that $\VV_{k-1} \neq \VV_k$.
Since $\VV_{k-1}$ is invariant under the action of $s_1$ and under the action of $s_2$, we have $\VV_{k-1} = \VV_k \cup \{ \delta_{k,1}, \delta_{k,2}, \delta_{k,3} \}$.
Set $G_j = \KB_n[\VV_k \cup \{\delta_{k,j}\}]$ for  $j \in \{1,2,3\}$ and $H = \KB_n[\VV_k]$.
By Lemma \ref{lem3_3}, $\KB_n[\VV_{k-1}] = G_1*_H G_2*_H G_3$.
Moreover, we have the following properties.
\begin{itemize}
\item
For each $i \in \{1,2\}$ and each $j \in \{1,2,3\}$ there exists $k \in \{1,2,3\}$ such that $s_i (G_j) = G_k$.
\item
For each $j\in \{1,2,3\}$ there exists $i \in \{1,2\}$ such that $s_i(G_j) \neq G_j$.
\item 
For each $i \in \{1,2\}$ we have $s_i(H) = H$.
\item
By Lemma \ref{lem3_9}, for each $i \in \{1,2\}$ and each $\gamma \in H$ such that $s_i(\gamma) = \gamma^{-1}$, there exists $\delta \in H$ such that $\gamma = \delta  \, s_i (\delta^{-1})$.
\end{itemize}
By Lemma \ref{lem3_10} it follows that there exist $\alpha_2', \alpha_2'' \in \KB_n[\VV_{k-1}]$ and $\beta \in \KB_n[\VV_k]$ such that $\beta_1 = \alpha_2' \beta \alpha_2''$, $s_1 (\alpha_2') = \alpha_2'$, $s_2 (\alpha_2'') = \alpha_2''$ and $\beta$ satisfies Equation (\ref{eq3_2}).
We set $\alpha'= \alpha_1' \alpha_2'$ and $\alpha'' = \alpha_2'' \alpha_1''$.
Then $\alpha = \alpha' \beta \alpha''$, $s_1 (\alpha') = \alpha'$ and $s_2 (\alpha'') = \alpha''$.

Set $\WW_1 = \{ \delta_{i,j} \in \XX \mid (i,j) \in (\{1,2,3\} \times \{1,2,3\}) \}$ and $\WW_2 = \{ \delta_{i,j} \in \XX \mid (i,j) \in (\{4, \dots, n\} \times \{4, \dots, n\}) \}$.
Notice that $\VV_n = \WW_1 \sqcup \WW_2$, $\KB_n [\VV_n] = \KB_n [\WW_1] \times \KB_n [\WW_2]$, and $s_1 (\gamma) = s_2 (\gamma) = \gamma$ for all $\gamma \in \KB_n [\WW_2]$.
By the above there exist $\alpha_1', \alpha_1'' \in \KB_n [\XX]$ and $\beta_1 \in \KB_n [\VV_n]$ such that $\alpha = \alpha_1' \beta_1 \alpha_1''$, $s_1 (\alpha_1') = \alpha_1'$, $s_2 (\alpha_1'') = \alpha_1''$ and 
$\beta_1$ satisfies Equation (\ref{eq3_2}).
Let $\beta \in \KB_n [\WW_1]$ and $\alpha_2'' \in \KB_n[\WW_2]$ such that $\beta_1 = \beta \alpha_2''$.
Set $\alpha' = \alpha_1'$ and $\alpha'' = \alpha_2'' \alpha_1''$.
Then $\alpha = \alpha' \beta \alpha''$, $s_1 (\alpha') = \alpha'$, $s_2 (\alpha'') = \alpha''$ and $\beta$ satisfies Equation (\ref{eq3_2}).

Let $\WW_{1,1} = \{ \delta_{i,j} \in \XX \mid (i,j) \in \{ (1,2),\, (2,3),\, (3,1)\} \}$ and $\WW_{1,2} = \{ \delta_{i,j} \in \XX \mid (i,j) \in \{ (2,1),\,(3,2),\, (1,3)\} \}$. 
Set $G_1 = \KB_n [\WW_{1,1}]$ and $G_2 = \KB_n [\WW_{1,2}]$.
By Lemma \ref{lem3_3}, $\KB_n [\WW_1] = G_1 * G_2$.
Moreover, $s_1 (G_1) = s_2 (G_1) = G_2$ and $s_1 (G_2) = s_2 (G_2) = G_1$.
From Lemma \ref{lem3_10} applied with $H = \{1\}$ it follows that there exist $\beta', \beta'' \in \KB_n [\WW_1]$ such that $\beta = \beta' \beta''$, $s_1 (\beta') = \beta'$ and $s_2 (\beta'') = \beta''$.
Actually, by Lemma \ref{lem3_7}, $\beta' = \beta'' = 1$, hence $\beta = 1$.
So, $\alpha = \alpha' \alpha''$, $s_1(\alpha') = \alpha'$ and $s_2 (\alpha'') = \alpha''$.
\end{proof}

As announced in Section \ref{sec2}, we take advantage of the results of the present section to prove Lemma \ref{lem2_6}.

\begin{proof}[Proof of Lemma \ref{lem2_6}]
Suppose instead that $\zeta_1$ is an inner automorphism, that is, $\zeta_1 = c_\gamma : \VB_n \to \VB_n$, $\delta \mapsto \gamma \delta \gamma^{-1}$, for some $\gamma \in \VB_n$.
We have $\gamma \neq 1$ since $\zeta_1 \neq \id$.
We write $\gamma$ in the form $\gamma = \alpha\, \iota(w)$ with $\alpha \in \KB_n$ and $w \in \SSS_n$.
For each $i \in \{ 1, \dots, n-1 \}$ we have $s_i = \pi_K (\tau_i) = \pi_K (\zeta_1 (\tau_i)) = \pi_K(\gamma \tau_i \gamma^{-1}) = w s_i w^{-1}$, hence $w$ lies in the center of $\SSS_n$ which is trivial, and therefore $w=1$ and $\gamma= \alpha \in \KB_n$.

Note that $\zeta_1 (\delta_{i,j}) = \delta_{j,i}$ for all $i,j \in \{1, \dots, n \}$, $i \neq j$.
Take $i,j \in \{1, \dots, n\}$, $i<j$, and set $\UU_{i,j} =\SS \setminus \{ \delta_{i,j}, \delta_{j,i} \}$, $\UU_{i,j}' = \UU_{i,j} \cup \{ \delta_{i,j} \}$ and $\UU_{i,j}'' = \UU_{i,j} \cup \{ \delta_{j,i} \}$.
By Lemma \ref{lem3_3}, $\KB_n = \KB_n [\UU_{i,j}']*_{\KB_n [\UU_{i,j}]} \KB_n [\UU_{i,j}'']$.
Moreover, $\zeta_1 (\KB_n [\UU_{i,j}']) = \KB_n [\UU_{i,j}'']$ and $\zeta_1 (\KB_n [\UU_{i,j}'']) = \KB_n [\UU_{i,j}']$.
From Lemma \ref{lem3_6} it follows that $\KB_n^{\zeta_1} \subset \KB_n [\UU_{i,j}]$.
Since $\bigcap_{1 \le i < j \le n} \UU_{i,j} = \emptyset$, by Theorem \ref{thm3_2}, $\bigcap_{1 \le i < j \le n} \KB_n [\UU_{i,j}] = \KB_n[ \emptyset] = \{ 1 \}$, thus $\KB_n^{\zeta_1} = \{ 1 \}$.
But $\alpha \in \KB_n^{\zeta_1}$ and $\alpha = \gamma \neq 1$, which is a contradiction.  
So, $\zeta_1$ is not an inner automorphism. 
\end{proof}


\section{From virtual braid groups to symmetric groups}\label{sec4}

The following is well-known and can be easily deduced from Artin \cite{Artin1} and Lin \cite{Lin1, Lin2}.
It is a preliminary to the proof of Theorem \ref{thm2_1}.

\begin{prop}\label{prop4_1}
Let $n, m \in \N$ such that $n \ge 5$, $m \ge 2$ and $n \ge m$, and let $\varphi : \SSS_n \to \SSS_m$ be a homomorphism.
Then, up to conjugation, one of the following possibilities holds. 
\begin{itemize}
\item[(1)]
$\varphi$ is Abelian,
\item[(2)]
$n = m$ and $\varphi = \id$,
\item[(3)]
$n = m = 6$ and $\varphi = \nu_6$.
\end{itemize}
\end{prop}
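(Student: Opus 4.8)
The plan is to exploit the normal subgroup structure of $\SSS_n$ for $n \ge 5$ together with the classical description of $\Aut(\SSS_n)$. First I would consider $K = \ker \varphi$, which is a normal subgroup of $\SSS_n$. Since $n \ge 5$, the alternating group $\mathfrak A_n$ is simple, hence the only normal subgroups of $\SSS_n$ are $\{1\}$, $\mathfrak A_n$ and $\SSS_n$. If $K = \SSS_n$ then $\varphi$ is trivial; if $K = \mathfrak A_n$ then the image of $\varphi$ is isomorphic to $\SSS_n / \mathfrak A_n \cong \Z/2\Z$. In both cases $\varphi$ is Abelian, which is possibility (1).

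The remaining case is $K = \{1\}$, that is, $\varphi$ injective. Then $\SSS_n$ embeds into $\SSS_m$, so $n! = |\SSS_n| \le |\SSS_m| = m!$ and hence $n \le m$; combined with the hypothesis $n \ge m$ this forces $n = m$. An injective endomorphism of a finite group is bijective, so $\varphi$ is an automorphism of $\SSS_n$. (In particular, when $m \in \{2,3,4\}$ this case cannot occur, since then $n \ge 5 > m$, and we are always in possibility (1).)

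It then remains to normalize $\varphi$ up to conjugation. The inner automorphisms of $\SSS_n$ are exactly the conjugations $c_\alpha$ by elements $\alpha \in \SSS_n = \SSS_m$, so up to conjugation $\varphi$ depends only on its class in $\Out(\SSS_n)$. I would then invoke the facts recalled in Section \ref{sec2}: $\Out(\SSS_n)$ is trivial for $n \neq 6$, while $\Out(\SSS_6)$ is cyclic of order $2$ generated by the class of $\nu_6$. Therefore, if $n = m \neq 6$ then $\varphi \sim_c \id$, which is possibility (2); and if $n = m = 6$ then $\varphi \sim_c \id$ or $\varphi \sim_c \nu_6$, which is possibility (2) or possibility (3).

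Every ingredient here is standard group theory, so I do not expect a genuine obstacle. The only points requiring a little care are checking that the reduction up to conjugation is legitimate — which it is, because in the relevant case the target equals $\SSS_m$ and inner automorphisms of $\SSS_m$ are precisely conjugations by its elements — and the input, quoted as well known, that among the symmetric groups $\SSS_n$ with $n \ge 5$ the sole source of outer automorphisms is the exceptional automorphism $\nu_6$ of $\SSS_6$.
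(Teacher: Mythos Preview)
Your argument is correct. The paper does not give its own proof of this proposition: it simply states that the result ``is well-known and can be easily deduced from Artin \cite{Artin1} and Lin \cite{Lin1, Lin2}.'' Your proof via the normal subgroup structure of $\SSS_n$ for $n \ge 5$ together with the classical description of $\Out(\SSS_n)$ is precisely the standard argument one would supply here, and it matches the level of input the paper is willing to quote (the structure of $\Out(\SSS_n)$ is explicitly recalled in Section~\ref{sec2}).
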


\begin{proof}[Proof of Theorem \ref{thm2_1}]
Let $n, m \in \N$ such that $n \ge 5$, $m \ge 2$ and $n \ge m$, and let $\psi : \VB_n \to \SSS_m$ be a homomorphism. 
By Proposition \ref{prop4_1} one of the following possibilities holds up to conjugation. 
\begin{itemize}
\item
$\psi \circ \iota$ is Abelian,
\item
$n = m$ and $\psi \circ \iota = \id$,
\item
$n = m = 6$ and $\psi \circ \iota = \nu_6$.
\end{itemize}

Suppose that $\psi \circ \iota$ is Abelian. 
Then there exists $w_1 \in \SSS_m$ such that $w_1 = (\psi \circ \iota) (s_i) = \psi (\tau_i)$ for all $i \in \{ 1, \dots, n-1 \}$.
Since $s_i^2 = 1$, we have $w_1^2 = 1$.
Set $w_2 = \psi (\sigma_1)$.
From the relation $\tau_i \tau_{i+1} \sigma_i = \sigma_{i+1} \tau_i \tau_{i+1}$ it follows that $\psi (\sigma_i) = w_1^2\,\psi (\sigma_i) = \psi (\sigma_{i+1}) w_1^2 = \psi (\sigma_{i+1})$ for all $i \in \{ 1, \dots, n-2 \}$, hence $\psi (\sigma_i) = w_2$ for all $i \in \{1, \dots, n-1 \}$.
Finally, from the relation $\tau_1 \sigma_3 = \sigma_3 \tau_1$ it follows that $w_1 w_2 = w_2 w_1$.
So, $\psi$ is Abelian.

Suppose that $n = m$ and $\psi \circ \iota = \id$.
Then $\psi (\tau_i) = s_i$ for all $i \in \{1, \dots, n-1\}$.
From the relations $\sigma_1 \tau_i = \tau_i \sigma_1$, $3 \le i \le n-1$, it follows that $\psi (\sigma_1)$ lies in the centralizer of $\langle s_3, \dots, s_{n-1} \rangle$ in $\SSS_n$, which is equal to $\langle s_1 \rangle = \{ 1, s_1 \}$, hence either $\psi (\sigma_1) = 1$ or $\psi (\sigma_1) = s_1$.
If $\psi (\sigma_1) = 1$, then $\psi (\sigma_i) = 1$, since $\sigma_i$ is conjugate to $\sigma_1$ in $\VB_n$, for all $i \in \{1, \dots, n-1 \}$, hence $\psi = \pi_K$.
Assume that $\psi (\sigma_1) = s_1$.
We show by induction on $i$ that $\psi (\sigma_i) = s_i$ for all $i \in \{1, \dots, n-1 \}$.
The case $i=1$ is true by hypothesis. 
Suppose that $i \ge 2$ and $\psi (\sigma_{i-1}) = s_{i-1}$.
Then, since $\tau_{i-1} \tau_i \sigma_{i-1} = \sigma_i \tau_{i-1} \tau_i$, we have $\psi (\sigma_i) = s_{i-1} s_i s_{i-1} s_i s_{i-1} = s_i$.
So, $\psi (\tau_i) = \psi (\sigma_i) = s_i$ for all $i \in \{1, \dots, n-1\}$, that is, $\psi = \pi_P$.

Suppose that $n = m = 6$ and $\psi \circ \iota = \nu_6$.
Then $\nu_6^{-1} \circ \psi \circ \iota = \id$, hence, by the above, either $\nu_6^{-1} \circ \psi = \pi_K$ or $\nu_6^{-1} \circ \psi = \pi_P$, and therefore either $\psi = \nu_6 \circ \pi_K$ or $\psi = \nu_6 \circ \pi_P$.
\end{proof}


\section{From symmetric groups to virtual braid groups}\label{sec5}

The core of the proof of Theorem \ref{thm2_2} lies in the following lemma.

\begin{lem}\label{lem5_1}
Let $n \in \N$, $n \ge 3$, and let $\varphi : \SSS_n \to \VB_n$ be a homomorphism such that $\pi_K \circ \varphi = \id$.
Then $\varphi$ is conjugate to $\iota$.
\end{lem}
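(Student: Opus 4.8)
The plan is to exploit the semi-direct product decomposition $\VB_n = \KB_n \rtimes \SSS_n$ together with the technical lemmas of Section \ref{sec3}. Since $\pi_K \circ \varphi = \id$, for each $i$ we can write $\varphi(s_i) = a_i\, \tau_i$ with $a_i \in \KB_n$; the goal is to conjugate $\varphi$ by a suitable element of $\KB_n$ so that all the $a_i$ become trivial, i.e.\ $\varphi = \iota$. First I would analyze the relations of $\SSS_n$ under $\varphi$. The relation $s_i^2 = 1$ forces $a_i\, \tau_i\, a_i\, \tau_i = 1$, i.e.\ $a_i\, s_i(a_i) = 1$, that is $s_i(a_i) = a_i^{-1}$. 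By Corollary \ref{corl3_7b} (or directly Lemma \ref{lem3_9}) applied with $\XX = \SS$, there exists $b_i \in \KB_n$ with $a_i = b_i\, s_i(b_i^{-1})$; replacing $a_i\,\tau_i$ accordingly shows $\varphi(s_i)$ is conjugate (by $b_i$) to $\tau_i$. The difficulty is of course that the different $b_i$ need not be compatible, so one must handle the mixed relations simultaneously.

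The key step will be to deal with the braid relations $s_i s_{i+1} s_i = s_{i+1} s_i s_{i+1}$. Writing $\varphi(s_i) = a_i \tau_i$ and expanding, this relation translates (using $w(\alpha) = \tau_w \alpha \tau_w^{-1}$ and the braid relation among the $\tau$'s) into an identity in $\KB_n$ of exactly the shape of Equation (\ref{eq3_2}) in Lemma \ref{lem3_11}, with the two reflections being $s_i$ and $s_{i+1}$; I would check this by a direct substitution. The far-commutation relations $s_i s_j = s_j s_i$ for $|i-j|\ge 2$ give, after a similar expansion, that $a_i$ is fixed by $s_j$ and $a_j$ is fixed by $s_i$, hence (by Corollary \ref{corl3_7b}) that $a_i \in \KB_n[\UU_j]$ with indices in $\SS$ avoiding $\{j,j+1\}$. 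Then I would run an induction on $i$: having arranged by a conjugation that $\varphi(s_1) = \tau_1, \dots, \varphi(s_{i-1}) = \tau_{i-1}$, I would use Lemma \ref{lem3_11} applied to the braid relation between $s_{i-1}$ and $s_i$ to write $a_i = a' a''$ with $s_{i-1}(a') = a'$ and $s_i(a'') = a''$. Conjugating by an appropriate element supported away from the previously normalized generators, and combining with the fixed-point information coming from the far-commutation relations with $s_1, \dots, s_{i-2}$ and with $s_{i+1}, \dots$, I would push $a_i$ into a small enough parabolic subgroup that the $s_i$-equivariance condition $s_i(a_i) = a_i^{-1}$ together with torsion-freeness of $\KB_n$ (Godelle--Paris \cite{GodPar1}) forces $a_i$ to become trivial after one further conjugation — exactly as in the final paragraph of the proof of Lemma \ref{lem3_11}, where a parabolic of the form $\KB_n[\WW_1]$ is shown to contain no nontrivial equivariant element.

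The main obstacle, as usual with these normalizations, is bookkeeping: ensuring that each conjugation used to kill $a_i$ lies in a parabolic subgroup that does \emph{not} disturb $\tau_1, \dots, \tau_{i-1}$, and that the resulting constraints from all the commuting generators are simultaneously exploitable. This is precisely what the carefully staged filtrations $\UU_k, \VV_k, \WW_k$ in Lemmas \ref{lem3_7}, \ref{lem3_9} and \ref{lem3_11} are designed to handle, so I expect the proof to reduce, after setting up the translation of the $\SSS_n$-relations, to an induction whose inductive step is an application of Lemma \ref{lem3_11} followed by a centralizer computation inside $\KB_n$ via Theorem \ref{thm3_2}. Once $\varphi$ is conjugated to agree with $\iota$ on all generators $s_1, \dots, s_{n-1}$, the equality $\varphi = \iota$ follows from the presentation of $\SSS_n$, completing the proof.
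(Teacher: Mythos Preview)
Your plan is essentially the paper's own proof: induction on the number of generators already normalized to $\tau_i$, using Lemma \ref{lem3_9} for the involution relation, Lemma \ref{lem3_11} for the braid relation, and Corollary \ref{corl3_7b} together with Theorem \ref{thm3_2} to confine the conjugating element to a parabolic $\KB_n[\SS_{k+1}]$ that fixes $\tau_1,\dots,\tau_{k-1}$. One small correction: Lemma \ref{lem3_11} is applied to $b_k$ (your notation), not to $a_k$; the braid relation $s_{k-1}(a_k)=a_k\,(s_k s_{k-1})(a_k)$ only takes the shape of Equation (\ref{eq3_2}) after substituting $a_k=b_k\,s_k(b_k^{-1})$, and it is the decomposition $b_k=b_k'b_k''$ (with $s_{k-1}(b_k')=b_k'$, $s_k(b_k'')=b_k''$) that yields $\varphi(s_k)=b_k'\tau_k b_k'^{-1}$ and hence the conjugating element $b_k'$.
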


\begin{proof}
Since $\pi_K \circ \varphi = \id$, for each $i \in \{1, \dots, n-1\}$ there exists $\alpha_i \in \KB_n$ such that $\varphi (s_i) = \alpha_i \tau_i$.
We prove by induction on $k$ that there exists a homomorphism $\varphi' : \SSS_n \to \VB_n$ conjugate to $\varphi$ such that $\pi_K \circ \varphi' = \id$ and $\varphi' (s_i) = \tau_i$ for all $i \in \{1, \dots, k \}$.
The case $k=n-1$ ends the proof of the lemma.

Suppose that $k = 1$.
We have $1 = \varphi (s_1)^2 = \alpha_1\, s_1(\alpha_1)\, \tau_1^2 = \alpha_1\, s_1 (\alpha_1)$, hence $s_1 (\alpha_1) = \alpha_1^{-1}$.
By Lemma \ref{lem3_9} there exists $\beta_1 \in \KB_n$ such that $\alpha_1 = \beta_1\, s_1(\beta_1^{-1})$.
Thus, $\varphi (s_1) = \alpha_1 \tau_1 = \beta_1\, s_1(\beta_1^{-1})\, \tau_1 = \beta_1 \tau_1 \beta_1^{-1}$. 
Set $\varphi' = c_{\beta_1^{-1}} \circ \varphi$. 
Then $\varphi'$ is conjugate to $\varphi$, $\pi_K \circ \varphi' = \id$, since $\beta_1 \in \KB_n$, and $\varphi' (s_1) = \tau_1$.

We assume that $k=2$ and $\varphi (s_1) = \tau_1$.
Since $\varphi (s_1 s_2 s_1) = \varphi (s_2 s_1 s_2)$, we have $\tau_1 \alpha_2 \tau_2 \tau_1 = \alpha_2 \tau_2 \tau_1 \alpha_2 \tau_2$, hence $s_1 (\alpha_2) = \alpha_2\, (s_2 s_1) (\alpha_2)$.
On the other hand, as in the previous paragraph, from the equality $\varphi (s_2)^2 = 1$ it follows that there exists $\beta_2 \in \KB_n$ such that $\alpha_2 = \beta_2 \, s_2 (\beta_2^{-1})$. 
Thus, 
\begin{gather*}
s_1 \big( \beta_2 \, s_2 (\beta_2^{-1}) \big) = \big( \beta_2\, s_2 (\beta_2^{-1}) \big) \, (s_2 s_1) \big( \beta_2 \, s_2 (\beta_2^{-1}) \big) \quad \Rightarrow\\
\beta_2 \, s_2 (\beta_2^{-1}) \, (s_2 s_1) (\beta_2) \, (s_2 s_1 s_2) (\beta_2^{-1}) \, (s_1 s_2) (\beta_2) \, s_1 (\beta_2^{-1}) = 1\,.
\end{gather*}
By Lemma \ref{lem3_11} there exist $\beta_2', \beta_2'' \in \KB_n$ such that $\beta_2 = \beta_2' \beta_2''$, $s_1 (\beta_2') = \beta_2'$ and $s_2 (\beta_2'') = \beta_2''$.
Thus, $\varphi(s_2) = \alpha_2 \tau_2 = \beta_2 \tau_2 \beta_2^{-1} = \beta_2' \tau_2 \beta_2'^{-1}$.
Set $\varphi' = c_{\beta_2'^{-1}} \circ \varphi$.
Then $\varphi'$ is conjugate to $\varphi$, $\pi_K \circ \varphi' = \id$, since $\beta_2' \in \KB_n$, $\varphi' (s_1) = \tau_1$, since $s_1 (\beta_2') = \beta_2'$, and $\varphi' (s_2) = \tau_2$ by construction.

We assume that $k \ge 3$ and $\varphi (s_i) = \tau_i$ for all $i \in \{1, \dots, k-1\}$.
Let $\ell \in \{1, \dots, k-2\}$.
Since $\varphi (s_k s_\ell) = \varphi(s_\ell s_k)$, we have $\alpha_k \tau_k \tau_\ell = \tau_\ell \alpha_k \tau_k$, hence $s_\ell (\alpha_k) = \alpha_k$.
By Corollary \ref{corl3_7b} it follows that $\alpha_k \in \KB_n [\UU_\ell]$, where $\UU_\ell = \{ \delta_{i,j} \in \SS \mid i,j \not\in \{\ell, \ell+1\} \}$.
Recall that $\SS_k = \{ \delta_{i,j} \in \SS \mid k \le i \neq j \le n \}$.
We have $\bigcap_{1 \le \ell \le k-2} \UU_\ell = \SS_k$ hence, by Theorem \ref{thm3_2}, $\alpha_k \in \KB_n [\SS_k]$.
Since $\varphi (s_{k-1} s_k s_{k-1}) = \varphi(s_k s_{k-1} s_k)$, we have $\tau_{k-1} \alpha_k \tau_k \tau_{k-1} = \alpha_k \tau_k \tau_{k-1} \alpha_k \tau_k$, hence $s_{k-1} (\alpha_k) = \alpha_k \, (s_k s_{k-1})(\alpha_k)$.
On the other hand, as in the two previous paragraphs, from the equality $\varphi(s_k)^2 = 1$ it follows that there exists $\beta_k \in \KB_n [\SS_k]$ such that $\alpha_k = \beta_k \, s_k (\beta_k^{-1})$.
So, 
\begin{gather*}
s_{k-1} \big( \beta_k\, s_k (\beta_k^{-1}) \big) = \big( \beta_k \, s_k (\beta_k^{-1}) \big) \, (s_k s_{k-1}) \big( \beta_k \, s_k (\beta_k^{-1}) \big) \quad \Rightarrow\\
\beta_k\, s_k (\beta_k^{-1}) \, (s_k s_{k-1}) (\beta_k) \, (s_k s_{k-1} s_k) (\beta_k^{-1}) \, (s_{k-1} s_k) (\beta_k) \, s_{k-1} (\beta_k^{-1}) = 1\,.
\end{gather*}
Note that $\SS_k$ is not invariant under the action of $s_{k-1}$, but $\SS_{k-1}$ is and $\SS_k \subset \SS_{k-1}$.
By Lemma \ref{lem3_11} there exist $\beta_k', \beta_k'' \in \KB_n[\SS_{k-1}]$ such that $\beta_k = \beta_k' \beta_k''$, $s_{k-1} (\beta_k') = \beta_k'$ and $s_k (\beta_k'') = \beta_k''$.
So, $\varphi (s_k) = \alpha_k \tau_k = \beta_k \tau_k \beta_k^{-1} = \beta_k' \tau_k \beta_k'^{-1}$.
Since $s_{k-1} (\beta_k') = \beta_k'$, by Corollary \ref{corl3_7b}, $\beta_k' \in \KB_n[\UU_{k-1}]$, where $\UU_{k-1} = \{ \delta_{i,j} \in \SS \mid i,j \not\in \{k-1, k\} \}$.
Since $\SS_{k-1} \cap \UU_{k-1} = \SS_{k+1}$, by Theorem \ref{thm3_2} it follows that $\beta_k' \in \KB_n [\SS_{k+1}]$, hence $s_i (\beta_k') = \beta_k'$ for all $i \in \{1, \dots, k-1 \}$.
Set $\varphi' = c_{\beta_k'^{-1}} \circ \varphi$.
Then $\varphi'$ is conjugate to $\varphi$, $\pi_K \circ \varphi' = \id$, and $\varphi' (s_i) = \tau_i$ for all $i \in \{1, \dots, k \}$.
\end{proof}

\begin{proof}[Proof of Theorem \ref{thm2_2}]
Let $n, m \in \N$ such that $n \ge 5$, $m \ge 2$, and $n \ge m$, and let $\varphi : \SSS_n \to \VB_m$ be a homomorphism. 
By Proposition \ref{prop4_1} one of the following possibilities holds up to conjugation. 
\begin{itemize}
\item
$\pi_K \circ \varphi$ is Abelian,
\item
$n = m$ and $\pi_K \circ \varphi = \id$,
\item
$n = m = 6$ and $\pi_K \circ \varphi = \nu_6$.
\end{itemize}

Assume that $\pi_K \circ \varphi$ is Abelian.
There exists $w \in \SSS_m$ such that $(\pi_K \circ \varphi) (s_i) = w$ for all $i \in \{1, \dots, n-1 \}$.
Set $\beta_0 = \iota (w) \in \VB_m$.
For each $i \in \{1, \dots, n-1\}$ there exists $\alpha_i \in \KB_m$ such that $\varphi (s_i) = \alpha_i \beta_0$.
Since $s_1^2 = 1$, we have $w^2 =1$, hence $\beta_0^2 = 1$.  
On the other hand, for each $i \in \{1, \dots, n-1\}$, we have $1 = \varphi(s_i)^2 = \alpha_i \beta_0 \alpha_i \beta_0$, hence $\alpha_i \beta_0 = \beta_0 \alpha_i^{-1}$.
Let $i \in \{2, \dots, n-1\}$.
We have $\varphi (s_1 s_i) = \alpha_1 \beta_0 \beta_0 \alpha_i^{-1} = \alpha_1 \alpha_i^{-1} \in \KB_m$, this element is of finite order, since $s_1 s_i$ is of finite order, and, by Godelle--Paris \cite{GodPar1}, $\KB_m$ is torsion free, hence $\alpha_1 \alpha_i^{-1} = 1$, that is, $\alpha_i = \alpha_1$.
So, $\varphi(s_i) = \alpha_1 \beta_0$ for all $i \in \{1, \dots, n-1\}$, hence $\varphi$ is Abelian. 

Suppose that $n = m$ and $\pi_K \circ \varphi = \id$.
Then, by Lemma \ref{lem5_1}, $\varphi$ is conjugate to $\iota$.

Suppose that $n = m = 6$ and $\pi_K \circ \varphi = \nu_6$.
We have $\pi_K \circ \varphi \circ \nu_6^{-1} = \id$ hence, by Lemma \ref{lem5_1}, $\varphi \circ \nu_6^{-1}$ is conjugate to $\iota$, that is, there exists $\alpha \in \VB_6$ such that $\varphi \circ \nu_6^{-1} = c_\alpha \circ \iota$.
Then $\varphi = c_\alpha \circ \iota \circ \nu_6$, hence $\varphi$ is conjugate to $\iota \circ \nu_6$.
\end{proof}


\section{From virtual braid groups to virtual braid groups}\label{sec6}

\begin{lem}\label{lem6_0}
Let $n \ge 3$, let $i,j,k \in \{1, \dots, n\}$ pairwise distinct, and let $\ell_1, \ell_2 \in \Z$.
Then $\delta_{i,j}^{\ell_1} \delta_{j,k}^{\ell_2} = 1$ if and only if $\ell_1 = \ell_2 = 0$.
Similarly, we have $\delta_{j,i}^{\ell_1} \delta_{k,j}^{\ell_2} =1$ if and only if $\ell_1 = \ell_2 = 0$.
\end{lem}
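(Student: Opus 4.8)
The plan is to deduce the statement directly from van der Lek's theorem (Theorem \ref{thm3_2}), bypassing any explicit computation inside the rank-two Artin group $\KB_n[\{\delta_{i,j},\delta_{j,k}\}]$ (which, for $i,j,k$ pairwise distinct, is a copy of the braid group $B_3$). First I would record the trivial but essential observation that, since $i\neq j$ and $j\neq k$, the ordered pairs $(i,j)$ and $(j,k)$ are distinct, so $\delta_{i,j}$ and $\delta_{j,k}$ are two distinct elements of the Artin generating set $\SS$ of $\KB_n$; likewise $\delta_{j,i}$ and $\delta_{k,j}$ are distinct elements of $\SS$.

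Next I would apply Theorem \ref{thm3_2} twice. On the one hand, taking $\XX=\{\delta_{i,j}\}$, the subgroup $\KB_n[\{\delta_{i,j}\}]$ is the Artin group attached to the $1\times 1$ Coxeter matrix $(1)$; this matrix yields no braid relations, so $\KB_n[\{\delta_{i,j}\}]$ is infinite cyclic, generated by $\delta_{i,j}$ (and symmetrically for $\delta_{j,k}$, $\delta_{j,i}$, $\delta_{k,j}$). On the other hand, the intersection clause of Theorem \ref{thm3_2} gives
\[
\KB_n[\{\delta_{i,j}\}]\cap\KB_n[\{\delta_{j,k}\}]=\KB_n[\{\delta_{i,j}\}\cap\{\delta_{j,k}\}]=\KB_n[\emptyset]=\{1\}\,,
\]
and the same identity with $\delta_{j,i},\delta_{k,j}$ in place of $\delta_{i,j},\delta_{j,k}$.

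To conclude: if $\delta_{i,j}^{\ell_1}\delta_{j,k}^{\ell_2}=1$, then $\delta_{i,j}^{\ell_1}=\delta_{j,k}^{-\ell_2}$ lies in $\KB_n[\{\delta_{i,j}\}]\cap\KB_n[\{\delta_{j,k}\}]=\{1\}$, hence $\delta_{i,j}^{\ell_1}=1=\delta_{j,k}^{\ell_2}$, and since the two cyclic subgroups are infinite this forces $\ell_1=\ell_2=0$; the reverse implication is obvious. The second assertion follows word for word with $\delta_{j,i}$ and $\delta_{k,j}$. There is no genuine obstacle in this argument; the only points needing a moment's care are checking that $\{\delta_{i,j}\}$ and $\{\delta_{j,k}\}$ are disjoint as subsets of $\SS$ (which uses only $i\neq j$ and $j\neq k$, so the statement is in fact slightly more general than the pairwise-distinct hypothesis), and resisting the temptation to verify $\langle a\rangle\cap\langle b\rangle=\{1\}$ inside $B_3=\langle a,b\mid aba=bab\rangle$ by a hands-on normal-form computation when Theorem \ref{thm3_2} supplies it for free.
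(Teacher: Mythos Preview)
Your argument is correct and in fact cleaner than the paper's. The paper proceeds in two steps: first it applies the epimorphism $\pi_P:\VB_n\to\SSS_n$ to obtain $(i,j)^{\varepsilon_1}(j,k)^{\varepsilon_2}=1$ in $\SSS_n$, forcing $\ell_1,\ell_2$ to be even; then it invokes the Crisp--Paris theorem that the squares $\delta_{i,j}^2,\delta_{j,k}^2$ freely generate a free subgroup of $\KB_n[\{\delta_{i,j},\delta_{j,k}\}]$, which kills the remaining even parts. You bypass both steps by using only the intersection clause of Theorem~\ref{thm3_2}: the one-generator parabolics $\KB_n[\{\delta_{i,j}\}]$ and $\KB_n[\{\delta_{j,k}\}]$ are infinite cyclic and meet in $\KB_n[\emptyset]=\{1\}$, so $\delta_{i,j}^{\ell_1}=\delta_{j,k}^{-\ell_2}$ already forces both powers to be trivial. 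Your route avoids the external reference to Crisp--Paris and the detour through $\SSS_n$, at the cost of leaning more heavily on van der Lek's theorem; since Theorem~\ref{thm3_2} is already a standing tool in the paper, that is a net gain. Your side remark that only $i\neq j$ and $j\neq k$ are needed (not $i\neq k$) is also correct.
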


\begin{proof}
Suppose that $\delta_{i,j}^{\ell_1} \delta_{j,k}^{\ell_2} = 1$.
Set $\ell_1 = 2 t_1 + \varepsilon_1$ and $\ell_2 = 2 t_2 + \varepsilon_2$ where $t_1, t_2 \in \Z$ and $\varepsilon_1, \varepsilon_2 \in \{ 0, 1 \}$.
We have $\pi_P (\delta_{i,j}^{\ell_1} \delta_{j,k}^{\ell_2}) = (i,j)^{\varepsilon_1} (j,k)^{\varepsilon_2} = 1$, hence $\varepsilon_1 = \varepsilon_2 = 0$.
So, $(\delta_{i,j}^2)^{t_1} (\delta_{j,k}^2)^{t_2} = 1$.
By Crisp--Paris \cite{CriPar1} the subgroup of $\KB_n[ \{\delta_{i,j}, \delta_{j,k}\}]$ generated by $\{ \delta_{i,j}^2, \delta_{j,k}^2 \}$ is a free group freely generated by $\{ \delta_{i,j}^2, \delta_{j,k}^2 \}$, hence $t_1 = t_2 =0$.
We show in the same way that, if $\delta_{j,i}^{\ell_1} \delta_{k,j}^{\ell_2} = 1$, then $\ell_1 = \ell_2 = 0$.
It is clear that, if $\ell_1 = \ell_2 = 0$, then $\delta_{i,j}^{\ell_1} \delta_{j,k}^{\ell_2} = \delta_{j,i}^{\ell_1} \delta_{k,j}^{\ell_2} =1$.
\end{proof}

\begin{lem}\label{lem6_1}
Let $n = 6$.
Set $u_i = \nu_6 (s_i)$ for all $i \in \{1 ,2, 3, 4, 5\}$.
Let $H$ be the subgroup of $\SSS_6$ generated by $\{ u_3, u_4, u_5\}$.
Then $\KB_6^H = \{1\}$, where $\KB_6^H = \{ \alpha \in \KB_6 \mid w(\alpha) = \alpha \text{ for all } w \in H \}$.
\end{lem}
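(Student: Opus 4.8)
The plan is to imitate the proof of Lemma~\ref{lem2_6}, replacing the single automorphism $\zeta_1$ used there by a family of involutions drawn from $H$, one for each pair of indices. For $1 \le i < j \le 6$ I would put $\UU_{i,j} = \SS \setminus \{\delta_{i,j}, \delta_{j,i}\}$, $\UU_{i,j}' = \UU_{i,j} \cup \{\delta_{i,j}\}$ and $\UU_{i,j}'' = \UU_{i,j} \cup \{\delta_{j,i}\}$; since $m_{\delta_{i,j},\delta_{j,i}} = \infty$, Lemma~\ref{lem3_3} gives $\KB_6 = \KB_6[\UU_{i,j}'] *_{\KB_6[\UU_{i,j}]} \KB_6[\UU_{i,j}'']$. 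The key step is the purely combinatorial claim that for every pair $\{i,j\}$ there is an involution $w_{i,j} \in H$ with $w_{i,j}(i) = j$. Granting this, $w_{i,j}$ also satisfies $w_{i,j}(j) = i$, hence it swaps $\delta_{i,j} \leftrightarrow \delta_{j,i}$ and fixes $\UU_{i,j}$ setwise (recall $\SSS_6$ acts on $\SS$ by permuting indices, Proposition~\ref{prop3_1}); it therefore acts on $\KB_6$ as an automorphism of order two (nontrivial, as it moves $\delta_{i,j}$) interchanging $\KB_6[\UU_{i,j}']$ and $\KB_6[\UU_{i,j}'']$ and preserving $\KB_6[\UU_{i,j}]$. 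By Lemma~\ref{lem3_6} we get $\KB_6^{w_{i,j}} \subseteq \KB_6[\UU_{i,j}]$, whence $\KB_6^H \subseteq \KB_6[\UU_{i,j}]$ because $w_{i,j} \in H$. Since $\bigcap_{1 \le i < j \le 6} \UU_{i,j} = \emptyset$, Theorem~\ref{thm3_2} finally yields $\KB_6^H \subseteq \bigcap_{i<j} \KB_6[\UU_{i,j}] = \KB_6[\emptyset] = \{1\}$, as wanted.

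So everything reduces to the combinatorial claim, which is where the real work lies. Recall $H = \nu_6(\langle s_3, s_4, s_5 \rangle)$ is a copy of $\SSS_4$ inside $\SSS_6$, and that the $\nu_6$-images of the six transpositions of $\langle s_3, s_4, s_5 \rangle$ are six triple transpositions of $\SSS_6$, each a fixed-point-free involution, hence each defining a perfect matching of $\{1, \dots, 6\}$. One way to see the claim conceptually is to identify this $\SSS_4$ with the rotation group of a cube acting on the cube's six faces: the six matchings are then those induced by the $180^\circ$ rotations about the six edge-axes, and every two faces are matched by one of these rotations, i.e.\ the six matchings cover all $\binom{6}{2} = 15$ pairs. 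Alternatively one checks this directly: with $u_i = \nu_6(s_i)$ one has $u_3 = (1,3)(2,4)(5,6)$, $u_4 = (1,2)(3,5)(4,6)$, $u_5 = (2,3)(1,4)(5,6)$, $u_3 u_4 u_3 = (1,6)(2,5)(3,4)$, $u_4 u_5 u_4 = (1,5)(2,6)(3,4)$ and $u_3 u_4 u_5 u_4 u_3 = (1,2)(3,6)(4,5)$, and the $2$-cycles occurring in these six elements of $H$ exhaust all $15$ pairs; this produces the required $w_{i,j}$.

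I expect the combinatorial claim to be the only genuine obstacle: the point is precisely that the involutions already available inside the small subgroup $H$, rather than in all of $\SSS_6$, suffice to carry any index to any other. Once that is settled, the structural half of the argument mirrors the amalgamated-product part of the proof of Lemma~\ref{lem2_6}, with $\zeta_1$ replaced by the family $(w_{i,j})_{i<j}$.
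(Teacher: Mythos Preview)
Your proposal is correct and follows essentially the same approach as the paper: the paper also sets up the amalgam $\KB_6 = \KB_6[\UU_{i,j}'] *_{\KB_6[\UU_{i,j}]} \KB_6[\UU_{i,j}'']$, lists exactly the same six involutions $u_3, u_4, u_5, u_3u_4u_3, u_4u_5u_4, u_3u_4u_5u_4u_3 \in H$, observes that every pair $\{i,j\}$ occurs as a $2$-cycle in one of them, applies Lemma~\ref{lem3_6}, and concludes via Theorem~\ref{thm3_2}. Your cube-rotation interpretation is a pleasant addition not in the paper, but the argument itself is the same.
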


\begin{proof}
Let $U = \{ u_3, u_4, u_5, u_3 u_4 u_3, u_4 u_5 u_4, u_3 u_4 u_5 u_4 u_3 \}$.
We have 
\begin{gather*}
u_3 = (1, 3) (2, 4) (5, 6)\,,\
u_4 = (1, 2) (3, 5) (4, 6)\,,\
u_5 = (2, 3) (1, 4) (5, 6)\,,\\
u_3 u_4 u_3 = (1,6) (2,5) (3,4)\,,\
u_4 u_5 u_4 = (1,5) (2,6) (3,4)\,,\\
u_3 u_4 u_5 u_4 u_3 = (1,2) (3,6) (4,5)\,.
\end{gather*}
Let $1 \le i < j \le 6$.
Set $\UU_{i,j} = \SS \setminus \{ \delta_{i,j}, \delta_{j,i} \}$, $\UU_{i,j}' = \UU_{i,j} \cup \{ \delta_{i,j} \}$ and $\UU_{i,j}'' = \UU_{i,j} \cup \{ \delta_{j,i} \}$.
We have $\KB_6 = \KB_6 [\UU_{i,j}'] *_{\KB_6 [\UU_{i,j}]} \KB_6[\UU_{i,j}'']$ by Lemma \ref{lem3_3}.
On the other hand, it is easily observed that there exists $v \in U$ such that $(i,j)$ is a cycle in the decomposition of $v$ as a product of disjoint cycles. 
For that $v$ we have $v (\KB_6 [\UU_{i,j}']) = \KB_6 [\UU_{i,j}'']$ and $v (\KB_6 [\UU_{i,j}'']) = \KB_6 [\UU_{i,j}']$.
By Lemma \ref{lem3_6} we deduce that $\KB_6^H \subset \KB_6^v \subset \KB_6 [\UU_{i,j}]$.
We have $\bigcap_{1 \le i < j \le 6} \UU_{i,j} = \emptyset$, hence, by Theorem \ref{thm3_2}, $\KB_6^H \subset \KB_6[\emptyset] = \{1\}$.
\end{proof}

We will use the following notation in the proof of Theorem \ref{thm2_3}.
Let $F_2 = F_2 (x,y)$ be the free group of rank $2$ freely generated by $\{ x,y \}$.
If $G$ is a group, $\alpha, \beta \in G$, and $\omega \in F_2$, then $\omega (\alpha, \beta)$ denotes the image of $\omega$ under the homomorphism $F_2 \to G$ which sends $x$ to $\alpha$ and $y$ to $\beta$.

\begin{proof}[Proof of Theorem \ref{thm2_3}]
Let $n, m \in \N$ such that $n \ge 5$, $m \ge 2$ and $n \ge m$, and let $\psi : \VB_n \to \VB_m$ be a homomorphism. 
By Theorem \ref{thm2_2} one of the following possibilities holds up to conjugation. 
\begin{itemize}
\item
$\psi \circ \iota$ is Abelian,
\item
$n = m$ and $\psi \circ \iota = \iota$,
\item
$n = m = 6$ and $\psi \circ \iota = \iota \circ \nu_6$.
\end{itemize}

Assume that $\psi \circ \iota$ is Abelian. 
We argue in the same way as in the Abelian case in the proof of Theorem \ref{thm2_1}.
There exists $\beta_1 \in \VB_m$ such that $\beta_1 = (\psi \circ \iota)(s_i) = \psi (\tau_i)$ for all $i \in \{1, \dots, n-1 \}$.
Since $s_1^2 = 1$ we have $\beta_1^2=1$.
Set $\beta_2 = \psi( \sigma_1)$.
From the relation $\tau_i \tau_{i+1} \sigma_i = \sigma_{i+1} \tau_i \tau_{i+1}$ it follows that $\psi (\sigma_i) = \beta_1^2\, \psi(\sigma_i) = \psi (\sigma_{i+1})\, \beta_1^2 = \psi (\sigma_{i+1})$, for all $i \in \{1, \dots, n-2 \}$, hence $\psi (\sigma_i) = \beta_2$ for all $i \in \{1, \dots, n-1 \}$.
Finally, from the relation $\tau_1 \sigma_3 = \sigma_3 \tau_1$ it follows that $\beta_1 \beta_2 = \beta_2 \beta_1$.
So, $\psi$ is Abelian.

Assume that $n = m$ and $\psi \circ \iota = \iota$, that is, $\psi (\tau_i) = \tau_i$ for all $i \in \{ 1, \dots, n-1 \}$.
For each $i \in \{1, \dots, n-1\}$ we set $\psi (\sigma_i) = \alpha_i\, \iota (w_i)$, where $\alpha_i \in \KB_n$ and $w_i \in \SSS_n$.
For each $i \in \{3, \dots, n-1\}$ we have $s_i w_1 = (\pi_K \circ \psi) (\tau_i \sigma_1) = (\pi_K \circ \psi) (\sigma_1 \tau_i) = w_1 s_i$, hence $w_1$ lies in the centralizer of $\langle s_3, \dots, s_{n-1} \rangle$ in $\SSS_n$, which is equal to $\langle s_1 \rangle = \{ 1, s_1 \}$, hence either $w_1 = s_1$ or $w_1 = 1$.

Suppose that $w_1 = s_1$, that is, $\psi (\sigma_1) = \alpha_1 \tau_1$.
For each $k \in \{3, \dots, n-1\}$ we have $\alpha_1 \tau_1 \tau_k = \psi (\sigma_1 \tau_k) = \psi (\tau_k \sigma_1) = \tau_k \alpha_1 \tau_1$, hence $s_k (\alpha_1) = \alpha_1$.
By Corollar \ref{corl3_7b} this implies that $\alpha_1 \in \KB_n [\UU_k]$, where $\UU_k =\{ \delta_{i,j} \mid i,j \not\in \{k , k+1 \} \}$.
We have $\bigcap_{3 \le k \le n-1} \UU_k = \{ \delta_{1,2}, \delta_{2,1} \}$ hence, by Theorem \ref{thm3_2}, $\alpha_1 \in \KB_n [ \{ \delta_{1,2}, \delta_{2,1} \} ]$.
Let $\omega \in F_2$ such that $\alpha_1 = \omega (\delta_{1,2}, \delta_{2,1})$.
Note that, by Theorem \ref{thm3_2}, $\KB_n [\{ \delta_{1,2}, \delta_{2,1} \}]$ is a free group freely generated by $\{ \delta_{1,2}, \delta_{2,1} \}$, hence $\omega$ is unique.
We have $\sigma_2 = \tau_1 \tau_2 \sigma_1 \tau_2 \tau_1$, hence $\psi (\sigma_2) = \omega(\delta_{2,3}, \delta_{3,2}) \tau_2$.
On the other hand, since $\sigma_1 \sigma_2 \sigma_1 = \sigma_2 \sigma_1 \sigma_2$, we have 
\begin{gather*}
\omega (\delta_{1,2}, \delta_{2,1}) \tau_1 \omega (\delta_{2,3}, \delta_{3,2}) \tau_2 \omega (\delta_{1,2}, \delta_{2,1}) \tau_1\\
 = \omega (\delta_{2,3}, \delta_{3,2}) \tau_2 \omega (\delta_{1,2}, \delta_{2,1}) \tau_1 \omega (\delta_{2,3}, \delta_{3,2}) \tau_2\,,
\end{gather*}
hence 
\begin{equation}
\omega (\delta_{1,2}, \delta_{2,1}) \omega (\delta_{1,3}, \delta_{3,1}) \omega (\delta_{2,3}, \delta_{3,2}) =
\omega (\delta_{2,3}, \delta_{3,2}) \omega (\delta_{1,3}, \delta_{3,1}) \omega (\delta_{1,2}, \delta_{2,1})\,.
\label{eq6_1}
\end{equation}
Let $\WW = \{ \delta_{i,j} \mid 1 \le i \neq j \le 3 \}$, $\WW_1 = \{ \delta_{1,2}, \delta_{2,3}, \delta_{3,1} \}$ and $\WW_2 = \{ \delta_{2,1}, \delta_{3,2}, \delta_{1,3} \}$.
By Lemma \ref{lem3_3}, $\KB_n[\WW] = \KB_n[\WW_1] * \KB_n [\WW_2]$.
From this decomposition and Lemma \ref{lem6_0} it is easily deduced that the only element of $F_2$ which satisfies Equation (\ref{eq6_1}) is $\omega = 1$, thus $\psi (\sigma_1) = \tau_1$.
Then from the equalities $\sigma_i = \tau_{i-1} \tau_i \sigma_{i-1} \tau_i \tau_{i-1}$, $2 \le i \le n-1$, it follows that $\psi (\sigma_i) = \tau_i$ for all $i \in \{2, \dots, n-1\}$, hence $\psi = \iota \circ \pi_P$.

Suppose that $w_1 =1$, that is, $\psi (\sigma_1) = \alpha_1$.
For each $k \in \{3, \dots, n-1\}$ we have $\alpha_1 \tau_k = \psi (\sigma_1 \tau_k) = \psi (\tau_k \sigma_1) = \tau_k \alpha_1$, hence $s_k (\alpha_1) = \alpha_1$.
By Corollary \ref{corl3_7b} this implies that $\alpha_1 \in \KB_n [\UU_k]$, where $\UU_k = \{\delta_{i,j} \mid i,j \not\in \{k, k+1\} \}$.
As above, $\bigcap_{3 \le k \le n-1} \UU_k = \{ \delta_{1,2}, \delta_{2,1} \}$, hence $\alpha_1 \in \KB_n [\{ \delta_{1,2}, \delta_{2,1} \}]$.
Let $\omega \in F_2$ such that $\alpha_1 = \omega (\delta_{1,2}, \delta_{2,1})$.
Again, since $\KB_n [\{ \delta_{1,2}, \delta_{2,1} \}]$ is a free group freely generated by $\{ \delta_{1,2}, \delta_{2,1} \}$, the element $\omega$ is unique. 
We have $\sigma_2 = \tau_1 \tau_2 \sigma_1 \tau_2 \tau_1$, hence $\psi(\sigma_2) = (s_1 s_2)(\alpha_1) = \omega (\delta_{2,3}, \delta_{3,2})$.
On the other hand, since $\sigma_1 \sigma_2 \sigma_1 = \sigma_2 \sigma_1 \sigma_2$, we have 
\begin{equation}
\omega (\delta_{1,2}, \delta_{2,1}) \omega (\delta_{2,3}, \delta_{3,2}) \omega (\delta_{1,2}, \delta_{2,1}) =
\omega (\delta_{2,3}, \delta_{3,2}) \omega (\delta_{1,2}, \delta_{2,1}) \omega (\delta_{2,3}, \delta_{3,2})\,.
\label{eq6_2}
\end{equation}
Recall that $\KB_n [\WW] = \KB_n [\WW_1] * \KB_n [\WW_2]$, where $\WW = \{ \delta_{i,j} \mid 1 \le i \neq j \le 3 \}$, $\WW_1 = \{ \delta_{1,2}, \delta_{2,3}, \delta_{3,1} \}$ and $\WW_2 = \{ \delta_{2,1}, \delta_{3,2}, \delta_{1,3} \}$.
From this decomposition and Lemma \ref{lem6_0} it is easily seen that, if $\omega$ satisfies Equation (\ref{eq6_2}), then $\omega$ is of the form $\omega = z^k$ with $z \in \{x,y\}$ and $k \in \Z$.
By Crisp--Paris \cite{CriPar1}, if $k \not\in \{1,0,-1\}$, then the subgroup of $\KB_n [\WW_1]$ generated by $\{ \delta_{1,2}^k, \delta_{2,3}^k, \delta_{3,1}^k \}$ is a free group freely generated by $\{ \delta_{1,2}^k, \delta_{2,3}^k, \delta_{3,1}^k \}$, hence $\delta_{1,2}^k \delta_{2,3}^k \delta_{1,2}^k \neq \delta_{2,3}^k \delta_{1,2}^k \delta_{2,3}^k$.
Similarly, we have $\delta_{2,1}^k \delta_{3,2}^k \delta_{2,1}^k \neq \delta_{3,2}^k \delta_{2,1}^k \delta_{3,2}^k$ if $k \not\in \{-1, 0, 1 \}$.
So, $\omega \in \{x, x^{-1}, y, y^{-1}, 1 \}$.
Moreover, from the equalities $\sigma_i = \tau_{i-1} \tau_i \sigma_{i-1} \tau_i \tau_{i-1}$, $2 \le i \le n-1$, it follows that $\psi (\sigma_i) = \omega (\delta_{i,i+1}, \delta_{i+1, i})$ for all $i \in \{2, \dots, n-1 \}$.
If $\omega = x$ then $\psi = \id$, if $\omega = x^{-1}$ then $\psi = \zeta_2$, if $\omega = y$ then $\psi = \zeta_1$, if $\omega=y^{-1}$ then $\psi = \zeta_1 \circ \zeta_2$, and if $\omega=1$ then $\psi = \iota \circ \pi_K$.

Now, we assume that $n = m = 6$ and $\psi \circ \iota = \iota \circ \nu_6$.
For each $i \in \{1,2,3,4,5\}$ we set $u_i = \nu_6 (s_i)$ and $\beta_i = \iota (u_i)$.
Thus, $\psi (\tau_i) = \beta_i$ for all $i \in \{1,2,3,4,5\}$.
For each $i \in \{1,2,3,4,5\}$ we set $\psi (\sigma_i) = \alpha_i \, \iota(w_i)$, where $\alpha_i \in \KB_6$ and $w_i \in \SSS_6$.
For each $i \in \{3,4,5\}$ we have $u_i w_1 = (\pi_K \circ \psi) (\tau_i \sigma_1) = (\pi_K \circ \psi) (\sigma_1 \tau_i) = w_1 u_i$, hence $w_1$ lies in the centralizer of $\langle u_3, u_4, u_5 \rangle$ in $\SSS_6$, which is equal to $\langle u_1 \rangle = \{ 1, u_1 \}$, thus either $w_1 = u_1$ or $w_1 = 1$.
On the other hand, for each $i \in \{3,4,5 \}$, we have $\alpha_1\, \iota (w_1) \, \beta_i = \psi (\sigma_1 \tau_i) = \psi (\tau_i \sigma_1) = \beta_i \alpha_1 \, \iota(w_1)$ and $\iota (w_1)\, \beta_i = \iota(w_1 u_i) = \iota(u_i w_1) = \beta_i\, \iota(w_1)$, hence $u_i (\alpha_1) = \beta_i \alpha_1 \beta_i^{-1} = \alpha_1$.
By Lemma \ref{lem6_1} it follows that $\alpha_1 = 1$.
So, either $\psi (\sigma_1) = \beta_1$ or $\psi (\sigma_1) = 1$.
If $\psi (\sigma_1) = \beta_1$, then, since $\sigma_i = \tau_{i-1} \tau_i \sigma_{i-1} \tau_i \tau_{i-1}$ for all $i \in \{2,3,4,5\}$, we have $\psi (\sigma_i) = \beta_i$ for all $i \in \{1,2,3,4,5\}$, and therefore $\psi = \iota \circ \nu_6 \circ \pi_P$.
If $\psi (\sigma_1) = 1$, then, since $\sigma_i = \tau_{i-1} \tau_i \sigma_{i-1} \tau_i \tau_{i-1}$ for all $i \in \{2,3,4,5\}$, we have $\psi (\sigma_i) = 1$ for all $i \in \{1,2,3,4,5\}$, and therefore $\psi = \iota \circ \nu_6 \circ \pi_K$.
\end{proof}



\end{document}